\newcommand{\myauthor}[3]{#1\protect\footnote{#2, \protect\url{#3}}}
\title{Multi-dimensional Sparse Super-resolution}
\author{\myauthor{Clarice Poon}{DAMTP, University of Cambridge}{C.M.H.S.Poon@maths.cam.ac.uk}, \quad
		\myauthor{Gabriel Peyr{\'e}}{CNRS \& DMA, \'Ecole Normale Sup\'erieure}{gabriel.peyre@ens.fr}}
\date{\today}
\begin{document}

\maketitle


\begin{abstract} 
This paper studies sparse super-resolution in arbitrary dimensions. More precisely, it develops a theoretical analysis of support recovery for the so-called BLASSO method, which is an off-the-grid generalisation of $\ell^1$ regularization (also known as the LASSO). 
While super-resolution is of paramount importance in overcoming the limitations of many imaging devices, its theoretical analysis is still lacking beyond the 1-dimensional (1-D) case. The reason is that in the 2-dimensional (2-D) case and beyond, the relative position of the spikes enters the picture, and different geometrical configurations lead to different stability properties.  
Our first main contribution is a connection, in the limit where the spikes cluster at a given point, between solutions of the dual of the BLASSO problem and Hermite polynomial interpolation ideals. Polynomial bases for these ideals, introduced by De Boor, can be computed by Gaussian elimination, and lead to an algorithmic description of limiting solutions to the dual problem.
With this construction at hand, our second main contribution is a detailed analysis of the support stability and super-resolution effect in the case of a pair of spikes. This includes in particular a sharp analysis of how the signal-to-noise ratio should scale with respect to the separation distance between the spikes.
Lastly, numerical simulations on different classes of kernels show the applicability of this theory and highlight the richness of super-resolution in 2-D. 
\end{abstract}



\section{Introduction}

Sparse super-resolution is a fundamental problem of imaging sciences, where one seeks to recover the positions and amplitudes of pointwise sources (so-called ``spikes'') from linear measurements against \textit{smooth} functions. A typical example is deconvolution, where the measurements correspond to a low-pass filtering (or equivalently the observation of low-frequency Fourier coefficients), and is at the heart of fluorescent microscopy techniques such as PALM and STORM~\cite{betzig2006imaging,rust2006sub}. More generally, the measurements need not to be translation-invariant, and this is for instance the case in MEG/EEG~\cite{baillet2001electromagnetic} where the location of pointwise activity sources is crucial~\cite{gramfort2013time}.

The fundamental question in all these fields is to understand the super-resolution limit (often called the ``Rayleigh limit'') of some  computational method. This corresponds, for a given signal-to-noise ratio, to the minimum allowable separation limit between spikes so that their locations can be estimated. Certifying whether (or not) this limit goes to zero as the noise level drops, and at which speed, is a difficult problem, which until now, has mostly been addressed in 1-D. These questions are even more involved in higher-dimensions, because the geometry of the spikes configurations is much richer, and in contrary to the 1-D setting, these geometric configurations (e.g. whether 3 spikes are aligned or not) are expected to impact the super-resolution ability. It is the purpose of this paper to shed some light on higher-dimensional super-resolution phenomena.

\subsection{BLASSO and Super-resolution}
\label{sec-blasso}

In this work, we focus on inverse problems for arbitrary dimension $d$. To this end, we consider the underlying domain to be $\Xx=\RR^d$ or $\Xx=\RR^d/\ZZ^d$ the $d$-dimensional torus. 
The sparse recovery methods we consider are framed as optimization problems on the Banach space $\Mm(\Xx)$ of bounded Radon measures on $X$.
The unknown data to recover is thus a measure $m_0 \in \Mm(\Xx)$, and one has access to linear measurements $y$ in some Hilbert space $\Hh$, with
\eql{\label{eq-fwd-model}
	y = \Phi m_0 + w \in \Hh
}
where $w \in \Hh$ models the acquisition noise, and $\Phi : \Mm(\Xx) \rightarrow \Hh$ is an operator of the form
\eq{
	\Phi: m  \in \Mm(\Xx) \mapsto \int_\Xx \varphi(x) \d m(x),
}	
 defined through its kernel $\phi: x \in \Xx \mapsto \phi(x) \in \Hh$. This kernel is assumed to be smooth, and it is indeed this smoothness that makes the operator ``coherent'' and ill-posed, thus requiring some sort of regularization technique to invert the forward imaging model~\eqref{eq-fwd-model}.

A typical example is deconvolution, which corresponds to the translation invariant problem, where $\phi(x)=\psi(x-\cdot)$ for some $\psi \in L^2(\Xx)$ and $\Hh=L^2(\Xx)$. On $\Xx=\RR^d/\ZZ^d$, when $\psi$ has a finitely supported Fourier spectrum $(\hat \psi(\om))_{\om \in \Om}$, up to a rescaling of the measurement, this is equivalent to consider directly a sampling of the Fourier frequencies, i.e. $\phi(x)=( e^{2\imath\pi\dotp{x}{\om}} )_{\om \in \Om}$ and $\Hh=\CC^{|\Om|}$. 
Non-translation invariant operators also ubiquitous in imaging, for instance non-stationary blurs or indirect observation on the boundary of the domain as for instance in EEG/MEG. Denoting $S \subset \Xx$ the domain of interest (for instance the boundary of the skull for brain imaging), these techniques can be modelled as a sub-sampled convolution, i.e. $\phi(x)=( \psi(x-z) )_{z \in \partial S}$, where typically $\psi$ is some kernel associated to a stationary electric or magnetic field. Another example of non-convolution problems routinely encountered in imaging is the Laplace transform $\phi(x)=e^{-\dotp{x}{\cdot}}$. Lastly, let us mention the problem of mixture estimation, which can also be framed as a multi-dimensional sparse recovery problem~\cite{gribonval2017compressive}.

This work focusses on the recovery of a superposition of point sources, which are measures of the form $m= \sum_{i=1}^N a_i \delta_{z_i}$, where $a \delta_z$ is a Diracs (or a ``spike'') at position $z \in X$ and with amplitude $a \in \RR$. 
Following several recent works (reviewed in Section~\ref{sec-pw} below), we regularize using the total variation norm $\abs{m}(\Xx)$ of $m \in \Mm(\Xx)$, which is defined as
\eq{
	\abs{m}(\Xx) \eqdef \sup\enscond{\int_\Xx \eta(x) \mathrm{d}m(x)}{ \eta\in C(\Xx), \; \norm{\eta}_{L^\infty} \leq 1}.
}
This corresponds to the generalization of the $\ell^1$ and $L^1$ norms of vectors and functions, since one has $|\sum_i a_i \delta_{z_i}|(\Xx) = \norm{a}_{\ell^1}$ and if $m$ has a density $f=\frac{\d m}{\d x}$ with respect to the Lebesgue measure $\d x$, then $|m|(\Xx)=\norm{f}_{L^1(\d x)}$. 

We thus consider a least-squares total-variation regularization 
\eql{\label{eq-blasso}\tag{$\Pp_\la(y)$}
	\uargmin{m\in \Mm(\Xx)}  \abs{m}(\Xx) + \frac{1}{2\la} \norm{\Phi m - y}_\Hh^2, 
}
where $\la>0$ is the regularization parameter, which should adapted to the noise level $\norm{w}$ (theoretical results such as ours advocating a linear scaling between $\la$ and $\norm{w}_\Hh$).
In the case of noiseless measurements $y$ (i.e. $w=0$), the limit of~\eqref{eq-blasso} as $\la \rightarrow 0$ is the constrained problem
\eql{\label{eq-blasso-noiseless}\tag{$\Pp_0(y)$}
	\uargmin{m \in \Mm(\Xx)} \abs{m}(\Xx) \text{ subject to } \Phi m = y
}
The purpose of this paper is to study the structure (and in particular the support) of the solutions to~\eqref{eq-blasso} when $\norm{w}$ and $\la$ are small, and the positions of the spikes $(z_i)_i$ of the sought after measure $m_0$ cluster around a fixed point. 


\subsection{Previous Works}
\label{sec-pw}

\paragraph{On-the-grid LASSO}

There is a long history on the use of convex methods, and in particular the $\ell^1$ norm, to recover sparse signals on a grid. This was pioneered by geophysicists~\cite{claerbout-robust1973,levy-reconstruction1981,santosa-linear1986} and then became mainstream in signal processing~\cite{chen-atomic1998} and for model selection in statistics~\cite{tibshirani1996regression}. 
General theoretical approaches (only constraining the sparsity of the signal), such as those used to study compressed sensing~\cite{donoho2006compressed,candes2006robust}, are however ineffective for super-resolution, where the linear measurement operator is highly coherent. The theory of super-resolution requires to impose constraints on the minimum separation distance between the spikes in place of the total number of those spikes. 

\paragraph{Off-the-grid BLASSO, minimum-separation condition.}

In order to avoid introducing a-priori a fixed grid, which might be detrimental both theoretically and computationally, it makes sense to consider the ``off-the-grid'' setting introduced in the previous section~\ref{sec-blasso}, as exposed by several authors, including~\cite{deCastro-exact2012,bredies-inverse2013,candes-towards2013}.
The ground breaking work of Cand\`es and Fernandez-Granda~\cite{candes-towards2013}, for the low-pass filter, proved that under a $\Oo(1/f_c)$ minimum separation condition between the spikes, exact recovery is achieved. This initial result has been extended to include noise stability~\cite{candes-superresolution2013,azais-spike2014}. 
Further refinements have been proposed, for instance statistical bounds~\cite{bhaskar2013atomic} and exact support recovery condition~\cite{duval2015exact} for more general (not necessarily translation invariant) measurements. 
This line of theoretical study works in arbitrary dimensions, but this does not corresponds to a super-resolution regime ($O(1/f_c)$ being often considered as the natural Rayleigh). For spikes with arbitary signs, it is easy to construct counter example below the Rayleigh limit where the BLASSO does not work even when there is no noise (this is to be contrasted with Prony-type approaches, see below).

\paragraph{BLASSO, positive spikes in 1-D.}

Going below the Rayleigh limit requires an analysis of the signal-to-noise scaling, i.e. at which speed the signal-to-noise ratio SNR should drop to zero as a function of the spikes separation distance. This scaling is typically polynomial, and the exponent depends on the number of spikes that cluster around a given location. 
The study of this scaling was initiated by \cite{donoho1992superresolution}, for the combinatorial search method (thus not numerically tractable). With such non-convex methods, the optimal scaling is obtained with no constraint on the sign pattern of the spikes, see also~\cite{demanet-recoverability2014} for a refined analysis.

As noticed above, BLASSO~\eqref{eq-blasso} in general cannot go below the Rayleigh limit for measures with arbitrary signs.
For positive spikes, BLASSO achieves super-resolution for some classes of measurement operators $\Phi$. This was initially studied for the LASSO problem (on a grid)~\cite{donoho1992maximum,fuchs2005sparsity}, and for the BLASSO, this holds for low-pass Fourier measurements and polynomial moments~\cite{deCastro-exact2012}. This exact recovery of positive spikes can also cope with sub-sampling~\cite{schiebinger2015superresolution}, but this is restricted to the 1-D setting.

Stability to noise for the LASSO (on discrete grid) is studied in~\cite{morgenshtern2016super}, with a signal-to-noise scaling $\norm{w} = O(t^{2N})$ where $N$ is the number of spikes clustered in a radius of $t$. This result holds in 1-D and 2-D, see also~\cite{bendory2017robust}. 
In the 1-D BLASSO case, a signal-to-noise scaling $\norm{w} = O(t^{2N-1})$ actually leads to exact support recovery under a non-degeneracy condition (which is true for the Gaussian convolution kernel)~\cite{2017-denoyelle-jafa}. Our work proposes extensions of this last result in arbitrary dimensions.

\paragraph{BLASSO, positive spikes in multiple-dimension.}

Only few theoretical works have studied sparse super-resolution in more than one dimension. Let us single out the work of~\cite{morgenshtern2016super}, which proves that 2-D super resolution on a discrete grid has a similar signal-to-noise scaling with spikes separation distance $t$ as in 1-D. 
This however does not provide an understanding of how super-resolution and support recovery operates in multiple dimensions, which in turn requires to work off-the-grid, using the BLASSO, as we do in this paper. 
Let us also note the work of \cite{shahram2006statistical} which studies scaling of statistical decision-theoretic bounds for pairs of spikes. This is inline with our study in Section~\ref{sec-two-spikes}, which shows that the BLASSO achieves the same signal-to-noise scaling.

\paragraph{Prony's type methods.}

While this paper is dedicated to convex $\ell^1$-type methods, there is a large body of methods and analysis that use non-convex or non-variational approaches. These methods are very often generalizations of the initial idea of Prony~\cite{prony1795essai} which encodes the spikes positions as the zeros of some polynomial, whose coefficients are derived from the measurements, see~\cite{stoica2005spectral} and the review paper~\cite{krim-two1996}. Let us for instance cite MUSIC~\cite{schmidt-multiple1986}, matrix pencil~\cite{hua1990matrix}, ESPRIT~\cite{roy1989esprit}, finite rate of innovation~\cite{vetterli-sparse2008}, Cadzow's denoising~\cite{cadzow1988signal,condat2015cadzow}. 

It is not the purpose of this paper to advocate for or against the use of convex methods, and there are pros-and-cons both in term of both practical performances and theoretical understanding. An important advantage of Prony based approaches is that, in the noiseless setting $w=0$, they achieve exact recovery without any condition on the sign of the spikes (whereas BLASSO requires either a minimum separation distance or positivity). 
The theoretical analysis of these approaches in the presence of noise is however more intricate, and only partial results exist. Cramer-Rao statistical bounds can be derived~\cite{clergeot1989performance} and non-asymptotic bounds have been proposed under minimum-separation conditions~\cite{liao-music2014,moitra2014threshold}.
A second difficulty with Prony based approaches is that they are non-trivial to extend to higher dimensions, and there is no general agreement on a canonical formulation even in 2-D. 
We refer for instance to~\cite{peterreconstruction,kunis2016multivariate,sauer2017prony,sacchini1993two,clark1994two,andersson2017espirit} and the references therein for several such extensions.
Furthermore, in the noiseless setting, in contrary to the 1-D setting, the number of recovered spikes does not scale linearly with the number of measurements~\cite{jiang2001almost}. 

\paragraph{Numerical Methods for the BLASSO.}

While it is not the purpose of this paper to develop numerical solvers, let us sketch some pointers to solvers for the BLASSO problem.
The BLASSO is computationally challenging because it is an infinite dimensional optimization problem. 
The most straightforward approach is to approximate the problem on a grid, which then becomes a finite dimensional LASSO (which itself is a linear program). This however leads to quantization artifacts, typically doubling the number of observed spikes in 1-D~\cite{2017-Duval-IP-lasso}. 

In the case of a finite number of Fourier frequencies, the dual optimization problem is finite dimensional. In 1-D, it can be solved exactly by lifting to a semi-definite program in $O(f_c^2)$ variables~\cite{candes-towards2013}. In 2-D and on more general semi-algebraic domains, this lifting is more involved numerically, and requires the use of a Lasserre hierarchy~\cite{de2017exact}.
For general measurements, \cite{bredies-inverse2013} proposed to use the Frank-Wolfe algorithm, which operates by adding in a greedy-manner new spikes, and is a convex counterpart of the celebrated matching pursuit algorithm~\cite{mallat1993matching} over a continuous dictionary~\cite{jacques2008geometrical}. It has a slow convergence rate, which is improved by interleaving non-convex optimization steps, also used in~\cite{boyd2017alternating}. This algorithm is in practice very efficient, and leads to state of the art result in 2-D and 3-D resolution, for instance with application to single-molecule imaging~\cite{boyd2017alternating}.

\subsection{Contributions}

We begin Section~\ref{sec-certif} with a recap on the link between support stability and the dual solutions of \eqref{eq-blasso}.  As already noted for instance in~\cite{duval2015exact}, support stability of~\eqref{eq-blasso} when $(w,\la)$ are small, is governed by a minimal norm dual solution associated with \eqref{eq-blasso}, and as shown in \cite{2017-denoyelle-jafa} for the 1-D case, analysis of the limit (as the minimum separation distance $t$ tends to 0) of these minimum norm dual solutions lead to a clear understanding of the behaviour of support stability. As a contribution, we also provide a closed form expression for this limiting certificate in the case of the ideal low pass filter with $N=f_c$, thus complementing some of the numerical observations from \cite{2017-denoyelle-jafa}.

 Our first main contribution is detailed in Section~\ref{sec-limiting}, where we provide a characterisation of the limiting certificate in the multi-dimensional setting. Furthermore, we provide a closed form expression for this limiting certificate in the case of the Gaussian filter. Due to the necessity of this certificate, our analysis thus sheds light on the behaviour of support stability in arbitrary dimensions.

Our second main contribution, detailed in Section~\ref{sec-two-spikes}, is a detailed analysis of the structure of the solution to~\eqref{eq-blasso} in the case of $N=2$ spikes. Under a non-degeneracy condition on the limiting dual certificate, we show that that the solution to the BLASSO is composed of two spikes, and we precisely characterise how the signal-to-noise should scale with the separation between the spikes for this recovery to hold.

Lastly, Section~\ref{sec-numerics} showcases numerical illustrations of these theoretical advances. The code to reproduce the results of this paper is available online\footnote{\url{https://github.com/gpeyre/2017-MSL-super-resolution}}.

\newcommand{\bdotp}[2]{\langle #1,\,#2\rangle_{\Bb}}

\section{Asymptotics of Dual Certificates}
\label{sec-certif}

\subsection{Vanishing Pre-certificate $\eta_{V,\Z}$}

A positive discrete measure $m_0 = \sum_i a_i \de_{z_i} \in \Mm(\Xx)$ is solution to~\eqref{eq-blasso-noiseless} if and only if the set of Lagrange multipliers of the constraint, often refered to in the literature as ``dual certificates''
\eq{
	\Dd(\Z) = \enscond{ \eta \in \Im(\Phi^*) }{ \norm{\eta}_\infty \leq 1, \forall i, \eta(z_i)=1 } 
}
is non-empty, where we denote by $\Z=(z_i)_{i=1}^N \in \Xx^N$ the spikes locations. Proving that $\Dd(\Z) \neq \emptyset$ is thus a Lagrange interpolation problem using continuous interpolating functions in $\Im(\Phi^*)$, and with the additional constraint that $\eta$ should be bounded by 1.
In the following, we say that a smooth function $\eta$ is non-degenerate for the positions $\Z$ if
\eql{\label{eq-cond-nd}\tag{$\text{ND}(\Z)$}
	\foralls x \notin \Z, \quad \eta(x) < 1, 
	\qandq
	\foralls i=1,\ldots,N, \quad  \nabla^2 \eta(z_i) \prec 0.
}
Here, we denote by $\nabla^2 \eta(x) \in \RR^{d \times d}$ the Hessian matrix of $\eta$ at $x \in \Xx$, and $A \prec 0$ means that the matrix $A \in \RR^{d \times d}$ is negative definite.
%
Throughout this paper, we shall use $\nabla^j$ to denote the full gradient of order $j$, for $\alpha \in \NN_0^d$, $\partial^\alpha \eqdef P(\partial)$ where $P(X) = X^\alpha$, and given $x, z\in \Xx$ and $f\in \Cder{j}(\Xx)$, let $\partial^j_z f(x) \eqdef \frac{\mathrm{d}^j}{\mathrm{d}t^j}\vert_{t=0} f(x+tz)$.
The condition \eqref{eq-cond-nd} is a strengthening of the condition of being a dual certificate, and is reminiscent of the non-degeneracy condition on the relative interior of the sub-differential which is standard in sensitivity analysis in finite dimension~\cite{bonnans2013perturbation} (recall that we are here dealing with infinite-dimensional optimization problems).

As initially shown in~\cite{duval2015exact}, the support stability of the solution to~\eqref{eq-blasso} with small $(w,\la)$ is governed by a specific dual certificate with minimum norm
\eq{
	\eta_{0,\Z} \eqdef \Phi^* p_{0,\Z}
	\qwhereq
	p_{0,\Z} \eqdef \uargmin{p \in \Hh} \enscond{ \norm{p}_\Hh }{ \Phi^* p \in \Dd(\Z) }. 
}
More precisely, if $\eta_{0,\Z}$ is non degenerated (i.e. satisfies~\eqref{eq-cond-nd}), then for $(\norm{w}/\la, \la)=O(1)$, then~\cite{duval2015exact} shows that the solution of~\eqref{eq-blasso} is unique and composed of $N$ spikes, whose positions and amplitudes converge smoothly toward $(a,\Z)$ as $\la \rightarrow 0$.   

Direct analysis of this $\eta_{0,\Z}$, and in particular proving that it satisfies condition~\eqref{eq-cond-nd}, is difficult, mainly because of the non-linear constraint $\norm{\eta_{0,\Z}}_\infty \leq 1$. Fortunately,~\cite{duval2015exact} introduces a simpler proxy, which is defined by replacing this constraint by the linear one of having vanishing derivatives at the spikes positions
\eql{\label{eq-defn-etaV}
	\eta_{V,\Z} \eqdef \Phi^* p_{V,\Z}
	\qwhereq
	p_{V,\Z} \eqdef \uargmin{p \in \Hh} \enscond{ \norm{p}_\Hh }{ \forall i, (\Phi^* p)(z_i)=1, \nabla(\Phi^* p)(z_i)=\zeros_d }
}
where $\nabla \eta(x) \in \RR^d$ is the gradient vector of $\eta$ at $x \in \Xx$.
The interest of this vanishing pre-certificate $\eta_{V,\Z}$ stems from the fact that 
\eq{
	\eta_{V,\Z} \text{ satisfies }~\eqref{eq-cond-nd}
	\quad\Longrightarrow\quad
	\eta_{0,\Z} \text{ satisfies }~\eqref{eq-cond-nd} \qandq 
	\eta_{0,\Z} = \eta_{V,\Z}.
}
(and the converse is also true), which means that one can simply check the non-degeneracy of $\eta_{V,\Z}$ to guarantee support stability in the small noise regime. 

\begin{rem}[Computation of $\eta_{V,\Z}$]
It is important to note that $p_{V,\Z}$ can be computed easily by solving a linear system of size $Q \times Q$ where $Q \eqdef (d+1)N$. Indeed, introducing the correlation kernel
\eql{\label{eq-etaV-corr}
	\foralls (x,x') \in \Xx^2, \quad \Corr(x,x') \eqdef \dotp{\phi(x)}{\phi(x')}_\Hh \in \RR, 
} 
one has that 
\eq{\label{eq-etaV-defn}
	\eta_{V,\Z}(x) = \sum_{i=1}^N \sum_{k=0}^d \al_{i,k} \partial_{1,k} \Corr(z_i,x)
	\qwhereq
	\al =  M^{-1} u_{d,N}
	\qandq
	M = ( \partial_{1,k}\partial_{2,\ell} \Corr(z_i,z_j) )_{i,j=1,\ldots,N}^{k,\ell=1,\ldots,d},
}
where we denoted $\partial_{1,k}\partial_{2,\ell} \Corr(x,x') \in \RR$ the derivative at $(x,x') \in \Xx^2$ with respect to the $k^{\text{th}}$ coordinate of $x$ and the $\ell^{\text{th}}$ coordinate of $x'$. The vector $u_{d,N} \in \RR^{Q}$ is defined by 
\eq{
	\foralls (i,k) \in \{1,\ldots,N\} \times \{0,\ldots,d\}, \quad
	(u_{d,N})_{i,k} \eqdef
	\choice{ 
		1 \qifq k=0, \\
		0 \quad\text{otherwise.}
	}
}
The matrix $M \in \RR^{Q \times Q}$ operates as $M\al = ( \sum_{j,\ell} M_{(i,k),(j,\ell)} \al_{j,\ell} )_{(i,k)}$.
\end{rem}

This simple expression~\eqref{eq-etaV-defn} for $\eta_{V,\Z}$, which only requires the inversion of a $Q \times Q$ linear system, has been used in 1-D to show that $\eta_{V,\Z}$ is non-degenerate, even for measure with arbitrary sign, under a minimum separation condition and for a class of convolution operator with fast decay (including the Cauchy kernel $\phi(x)=(1+(x-\cdot))^{-2} \in L^2(\RR)$), see~\cite{tang2013atomic}.

The main goal of this paper is to study the non-degeneracy of $\eta_{V,\Z}$ in multiple dimensions. 
Of particular interest for us is the case where the spike locations cluster around a point (which we set without loss of generality to be 0). So, we consider spike locations of the form $t\Z=(tz_i)_{i=1}^N$ where the scaling parameter $t>0$ controls the minimum separation distance between the spikes. An important question is to understand whether $\eta_{V,t\Z}$ converges as $t \rightarrow 0$ and to derive an explicit formula for this limit (which we denote by $\eta_{W,\Z}$).

\subsection{Asymptotic Pre-certificate in 1-D}

We now summarise the results of~\cite{2017-denoyelle-jafa}, which hold in the 1-D setting, $d=1$. First one has the convergence of $\eta_{V,t\Z}$, as $t \rightarrow 0$, towards
\eql{\label{eq-defn-etaW-1d}
	\eta_{W} \eqdef \Phi^* p_{W}
	\qwhereq
	p_{W} \eqdef \uargmin{p \in \Hh} \enscond{ \norm{p}_\Hh }{ (\Phi^* p)(0)=1, \forall s=1,\ldots,2N-1,  (\Phi^* p)^{(s)}(0) = 0  },  
}
where $\eta^{(k)}$ denotes the $k^{\text{th}}$ derivative of $\eta$.
Note in particular that the limit $\eta_W$ is \textit{independent} of $\Z$, which should be contrasted with the higher dimensional case, see Section~\ref{sec-asymp-highdim} below. 

The main result of~\cite{2017-denoyelle-jafa} is that, if $\eta_{W}$ is non-degenerate, in the sense that 
\eq{
	\foralls x \neq 0, \quad \eta_{W}(x) < 1, 
	\qandq
	\eta_{W}^{(2N)}(0) < 0
}
then for $t$ small enough, $\eta_{V,t\Z}$ is also non-degenerate, and one can compute a sharp estimate of the support stability constant involved as a function of $t$. More precisely, one should have $\norm{w}_\Hh/\la=O(1)$ and $\la=O(t^{2N-1})$ in order for~\eqref{eq-blasso} to recover the correct number $N$ of spikes and to have smoothly converging positions and amplitudes as $\la \rightarrow 0$.

\begin{rem}[Computation of $\eta_{W}$]
Similarly to~\eqref{eq-etaV-corr}, $\eta_{W}$ is conveniently computed by solving a finite dimensional  linear system of size $2N \times 2N$
\eql{\label{eq-etaw-1d-corr}
	\eta_{W}(x) = \sum_{s=0}^{2N-1} \be_{r} \partial_1^{(r)} \Corr(0,x)
	\qwhereq
	\be =  R^{-1} \de_{2N}
	\qandq
	R = ( \partial_1^{(r)} \partial_2^{(s)} \Corr(z_i,z_j) )_{r,s=0,\ldots,2N-1} \in \RR^{2N \times 2N},
}
where $\partial_1^{(r)}$ and $\partial_2^{(s)}$ the $r^{\text{th}}$ and $s^{\text{th}}$ order derivatives with respect to the first and second variables (with the convention $\partial_1^{(0)} \Corr= \Corr$), and $\de_{2N} \eqdef (1,0,\ldots,0)^* \in \RR^{2N}$. 
\end{rem}

The simple expression~\eqref{eq-etaw-1d-corr} allows easily to study $\eta_W$, and numerical computations shows that it is indeed non-degenerate for many low pass filters~\cite{2017-denoyelle-jafa}. It can even be computed in closed form in the case of the Gaussian filter, see also Section~\ref{sec-gaussian-closedform} below.

The following proposition, which is new and thus a contribution of our paper, shows that one can also compute $\eta_W$ in closed form in the case of an ideal low-pass filter for the special case $N=f_c$, and that both $\eta_{V,\Z}$ and $\eta_W$ are non-degenerate. The general case of $f_c \neq N$ is still an open problem.

\begin{thm}\label{thm-etaw-lowwpass}
Let  $\Xx = \ZZ/\RR$.
Let $\tilde \varphi_D(x) = \sum_{\abs{k}\leq f_c} e^{2\pi i k x}$ where $f_c\in\NN$ is the cutoff frequency. Let $\varphi(x) = \tilde \varphi_D(\cdot -x)$. If $f_c = N$, then $\eta_{V,\Z}(x)<1$ for all $x \notin z$, and one has 
\eq{
	\eta_W(x) = 1- C \sin^{2N}(\pi x) , 
}
for some $C>0$ (whose explicit form can be found in \eqref{eq:g}).
So in particular, $\eta_W(x)<1$ for all $x \in \Xx \setminus \{0\}$.
\end{thm}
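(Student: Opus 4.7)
The plan is to exploit the fact that when $f_c = N$, the number of interpolation constraints exactly saturates the dimension of $\mathrm{Im}(\Phi^*)$, reducing the problem to a tight zero-counting argument. First I would identify the setup: since $\tilde{\varphi}_D$ is the Dirichlet kernel of order $N$, the range $\mathrm{Im}(\Phi^*)$ coincides with $\mathcal{T}_N$, the space of real trigonometric polynomials on $\Xx = \RR/\ZZ$ of degree at most $N$, and Parseval gives $\|\Phi^* p\|_{L^2(\Xx)} = \|p\|_\Hh$ whenever $p$ is low-pass. Thus both minimum-norm problems defining $\eta_{V,\Z}$ (via \eqref{eq-defn-etaV}) and $\eta_W$ (via \eqref{eq-defn-etaW-1d}) reduce to minimizing $\|\eta\|_{L^2(\Xx)}$ over $\eta \in \mathcal{T}_N$ subject to $2N$ real linear constraints.

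The key algebraic observation is that the nonnegative trig polynomial $P_\Z(x) := \prod_{j=1}^N \sin^2(\pi(x-z_j))$ lies in $\mathcal{T}_N$ (since $\sin^2(\pi y) = (1-\cos(2\pi y))/2 \in \mathcal{T}_1$) and vanishes to order exactly $2$ at each $z_j$. I claim $P_\Z$ spans, up to scalars, the kernel of the homogeneous constraints $\{\eta(z_i) = 0,\; \eta'(z_i) = 0\}_i$ on $\mathcal{T}_N$. Indeed, writing any $\eta \in \mathcal{T}_N$ as $\eta(x) = e^{-2\pi i N x} Q(e^{2\pi i x})$ for an algebraic polynomial $Q$ of degree at most $2N$, a double zero of $\eta$ at each $z_j$ corresponds to a double zero of $Q$ at $w_j := e^{2\pi i z_j}$; the $N$ points $w_j$ are distinct, so $Q$ has $2N$ roots counted with multiplicity and is therefore uniquely determined up to a scalar, matching $e^{2\pi i N x} P_\Z(x)$. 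Consequently the feasible set for $\eta_{V,\Z}$ is the one-dimensional affine line $\{1 - c P_\Z : c \in \RR\}$ (since the constant function $1$ is feasible). The same argument applied with all $z_j$ coalesced at $0$ identifies the feasible set for $\eta_W$ with $\{1 - c \sin^{2N}(\pi x) : c \in \RR\}$, as $\sin^{2N}(\pi x) \in \mathcal{T}_N$ is the unique (up to scalar) element vanishing to order $2N$ at $0$.

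To finish, I minimize $\|1 - c P\|_{L^2}^2 = 1 - 2c\int_\Xx P + c^2 \int_\Xx P^2$ over $c$, giving the closed form $c^\star = \int_\Xx P / \int_\Xx P^2 > 0$. For the clustered limit this yields
\begin{equation}\label{eq:g}
\eta_W(x) = 1 - C \sin^{2N}(\pi x), \qquad C = \frac{\int_0^1 \sin^{2N}(\pi x)\,dx}{\int_0^1 \sin^{4N}(\pi x)\,dx} > 0,
\end{equation}
and analogously $\eta_{V,\Z}(x) = 1 - c^\star(\Z)\, P_\Z(x)$. Since $P_\Z \geq 0$ with zero set equal to $\Z$ and $c^\star(\Z) > 0$, this immediately gives $\eta_{V,\Z}(x) < 1$ for all $x \notin \Z$, and likewise $\eta_W(x) < 1$ for all $x \neq 0$ since $\sin^{2N}(\pi x)$ vanishes only at $x = 0$ on $\Xx$.

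The main subtlety is the dimension-saturating zero-counting step: it is precisely the critical hypothesis $f_c = N$ that makes the $2N$ real constraints consume all but one of the $2N+1$ real degrees of freedom in $\mathcal{T}_N$, producing the one-parameter feasible family with a multiplicative (rather than merely additive) structure. I would expect the analogous statement for $f_c \neq N$ to be significantly harder precisely because this rigidity is lost: for $f_c > N$ the feasible set has dimension $2(f_c - N) + 1$ and the minimum-norm solution no longer factors as $1 - c \prod_j \sin^2(\pi(x-z_j))$, explaining the open status mentioned before the theorem.
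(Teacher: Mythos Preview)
Your proof is correct and considerably more direct than the paper's. The paper proceeds in two separate computational stages: for $\eta_{V,\Z}$ it argues by contradiction, assuming $\eta_V(\tau)=1$ for some $\tau\notin\Z$ and showing this forces a singular $2N\times 2N$ system $L^*R$, then rules that out by a polynomial root-count on $x^{f_c}F(x)$; it then separately eliminates the possibility $\eta_V\geq 1$ by comparing $\|\eta_V\|$ with $\|2-\eta_V\|$. For $\eta_W$ it performs an explicit cofactor expansion of $\det(\tilde R)$ to extract the factor $\sin^{2f_c}(\pi\tau)$, yielding the determinant-based constant in the paper's \eqref{eq:g}. Your approach short-circuits all of this by observing that when $f_c=N$ the $2N$ linear constraints leave a one-dimensional affine feasible set in $\mathcal T_N$, identified via the factorization $\eta(x)=e^{-2\pi iNx}Q(e^{2\pi ix})$ and the fundamental theorem of algebra, so that $\eta_{V,\Z}=1-c^\star(\Z)\prod_j\sin^2(\pi(x-z_j))$ and $\eta_W=1-C\sin^{2N}(\pi x)$ drop out immediately; the sign of the optimal scalar follows from the elementary quadratic minimization. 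This buys you both the strict inequality and the closed form in one stroke, and your constant $C=\int_0^1\sin^{2N}/\int_0^1\sin^{4N}$ (equivalently $4^N\binom{2N}{N}/\binom{4N}{2N}$ via Wallis) is a much cleaner expression than the paper's determinant formula, though of course they must agree. One small point worth making explicit: the reduction from minimizing $\|p\|_\Hh$ to minimizing $\|\eta\|_{L^2}$ uses that the least-norm $p$ necessarily lies in $\mathrm{span}\{\varphi(z_i),\varphi'(z_i)\}\subset\mathcal T_N$, hence is itself low-pass; you rely on this implicitly when invoking Parseval.
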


The proof of this theorem can be found in Appendix~\ref{sec-proof-thm-etaw-lowwpass}.

\subsection{Asymptotic Pre-certificate in Arbitrary Dimension}
\label{sec-asymp-highdim}

We now aim at generalizing the expression~\eqref{eq-etaw-1d-corr} to the case $d>1$. 


\subsubsection{General Definition of Vanishing Pre-Certicates}

The first difficulty is that the limit of $\eta_{V,t\Z}$ as $t \rightarrow 0$, provided that it exists, depends on the direction of convergence $\Z \in \Xx^N$, as illustrated by Figure~\ref{fig-etav-conv}, and we will thus denote it $\eta_{W,\Z}$. Intuitively, the difficulty is to identify which derivatives of $\eta_{W,\Z}$ should vanish in the limit. They should span a space of dimension $Q \eqdef N(d+1)$, since this matches the number of constraints appearing in~\eqref{eq-defn-etaV}.

We denote $\Pi^d$ the space of polynomials in $d$ variables $(X_1,\ldots,X_d)$ and for a $d$-tuple $k=(k_1,\ldots,k_d)$, the associated monomial $X^k \eqdef X_1^{k_1}\ldots X_d^{k_d}$. 
To ease the description of these constraints, for a polynomial $P = \sum_{k} a_{k} X^{k} \in \Pi^d$, we denote $P(\partial)$ the differential operator 
\eq{
	P(\partial) \eqdef \sum_{k} a_{k} \partial_1^{k_1} \ldots \partial_d^{k_d} 
}
where $\partial_s$ is the derivative with respect to the $s^{\text{th}}$ variable.

\newcommand{\bSs}{\bar\Ss}

The construction of the limiting certificate requires to identify a linear subspace $\Ss_{z} \subset \Pi^d$ which encodes the vanishing derivative constraints (which should have dimension $Q$) and solve 
\eql{\label{eq-defn-etaW-multidim}
	\eta_{W,\Z} \eqdef \Phi^* p_{W,\Z}
	\qwhereq
	p_{W,\Z} \eqdef \uargmin{p \in \Hh} \enscond{ \norm{p}_\Hh }{ (\Phi^* p)(0)=1, \forall P \in \bSs_{z}, 
		( P(\partial)[ \Phi^* p ] )(0) = 0 },
}
where $\bSs \subset \Ss$ is the linear subspace of polynomials $P$ such that $P(0)=0$. 
We show in Section~\ref{sec-least-interpolant} below that indeed such a space $\Ss_{z}$ exists, and that it can be computed using a simple Gaussian elimination algorithm.

\begin{rem}[Computation of $\eta_{W}$]\label{rem-comp-etaW-multid}
Once again, $\eta_{W,\Z}$ defined by~\eqref{eq-defn-etaW-multidim} can be computed by solving a $Q \times Q$ linear system.
Indeed, the linear space $\Ss_{z}$ is described using a basis of polynomials 
\eq{
	\Ss_{z} = \Span \enscond{ P_r }{ r=0,\ldots, Q-1  }
}
to which we impose for notation convenience $P_0=1$ (so that $P_0(\partial)[\eta]=\eta$). 
Then, denoting the various derivatives of the covariance as
\eq{
	\foralls (r,s) \in \{0,\ldots,Q-1\}^2, \quad
	\Corr_{r,s} \eqdef P_r^{[1]}(\partial) P_s^{[2]}(\partial)[\Corr]
}
where here we have use the notations $P_r^{[1]}(\partial)$ and $P_s^{[2]}(\partial)$ to indicate whether the polynomial should be used to differentiate on the first variable $x$ or the second variable $x'$ of $\Corr(x,x')$ (in particular $\Corr_{0,0}=\Corr$), one has
\eql{\label{eq-etaw-highdim-corr}
	\eta_{W,\Z}(x) = \sum_{r=0}^{Q-1} \be_{r} \Corr_{r,0}(0,x)
	\qwhereq
	\be =  R^{-1} \de_{Q}
}
\eq{
	\qandq
	R = \Big( \Corr_{r,s}(0,0) \Big)_{r,s=0,\ldots,Q-1} \in \RR^{Q \times Q}.
}
\end{rem}

Note that in dimension $d=1$, the expressions~\eqref{eq-defn-etaW-multidim} and~\eqref{eq-etaw-highdim-corr} are equivalent to those already given in~\eqref{eq-defn-etaW-1d} and~\eqref{eq-etaw-1d-corr} when using the monomial basis $\Ss_{z} = \Span\{X_1^r\}_{r=0}^{2N-1}$, with $Q=2N$.

\newcommand{\MyFigEtaV}[1]{\includegraphics[width=.24\linewidth]{etav-conv/#1}}
\begin{figure}
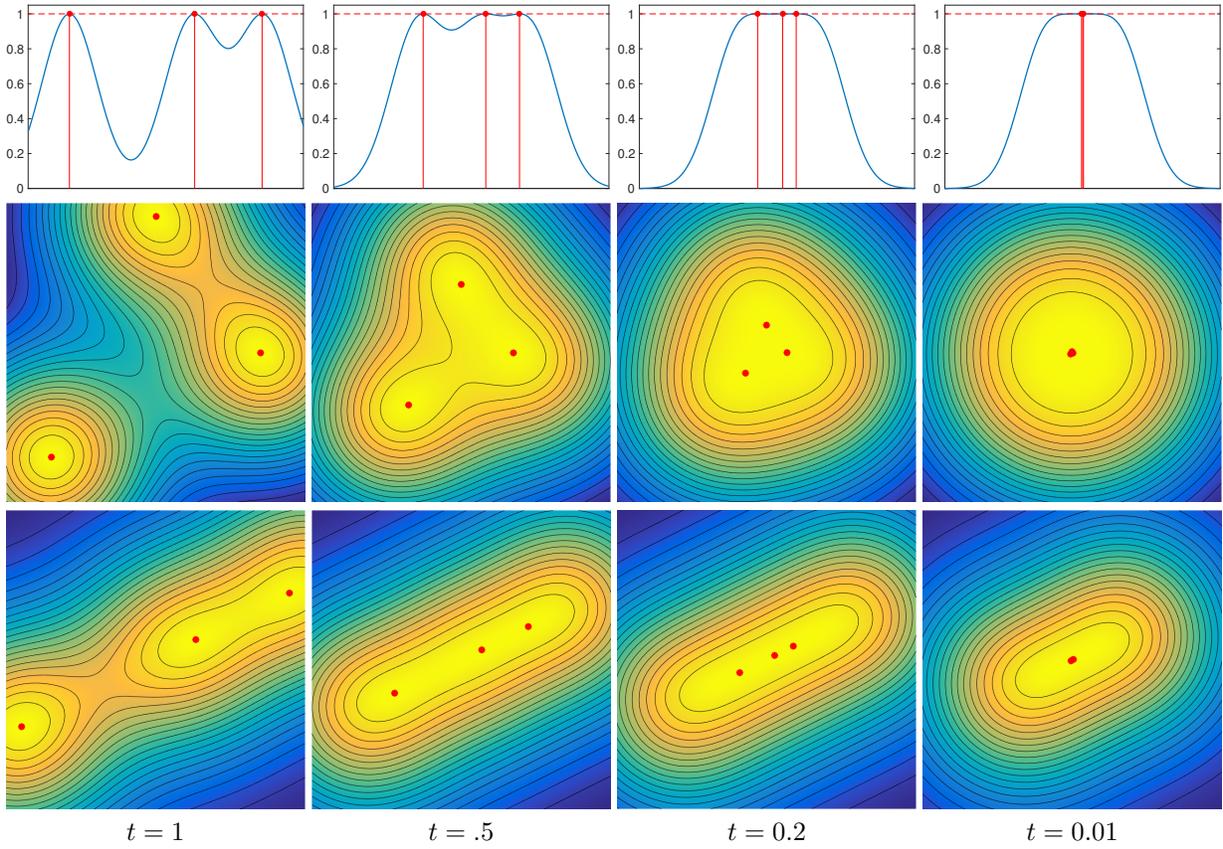

\centering
\begin{tabular}{@{}c@{\hspace{1mm}}c@{\hspace{1mm}}c@{\hspace{1mm}}c@{}}
\MyFigEtaV{gaussian1d-1}&
\MyFigEtaV{gaussian1d-2}&
\MyFigEtaV{gaussian1d-3}&
\MyFigEtaV{gaussian1d-4}\\
\MyFigEtaV{gaussian2d-generic-1}&
\MyFigEtaV{gaussian2d-generic-2}&
\MyFigEtaV{gaussian2d-generic-3}&
\MyFigEtaV{gaussian2d-generic-4}\\
\MyFigEtaV{gaussian2d-aligned-1}&
\MyFigEtaV{gaussian2d-aligned-2}&
\MyFigEtaV{gaussian2d-aligned-3}&
\MyFigEtaV{gaussian2d-aligned-4}\\
$t=1$ & $t=.5$ & $t=0.2$ & $t=0.01$
\end{tabular}
\caption{\label{fig-etav-conv}
Display of the evolution of $\eta_{V,t\Z}$ for $t \rightarrow 0$. Top: 1-D Gaussian convolution.
Middle: 2-D Gaussian convolution, $\Z$ in generic positions.
Bottom: same but with aligned positions $\Z$.
}
\end{figure}

\subsubsection{The Least Interpolant Space $\Ll_\Ff$}
\label{sec-least-interpolant}

The definition~\eqref{eq-defn-etaV} of $\eta_{V,\Z}$ involves a Hermite interpolation problem at nodes $\Z=(z_i)_{i=1}^N$. Considering the asymptotic $t\Z$ with $t \rightarrow 0$ of the associated interpolation problem naturally leads to the analysis (through Taylor expansion) of the behavior of polynomial interpolation. 
Polynomial interpolation in arbitrary dimension is notoriously difficult, and we refer to the monograph~\cite{lorentz2000multivariate} for a detailed account on this topic. This is due in large part to the fact that finding suitable polynomial spaces so that the interpolation problem is \textit{regular} (has a unique solution) is non trivial, and that, in contrary to the 1-D case, such a space space depends on the interpolating positions $\Z$. As we now explain, solving this issue is at the heart of the description of $\eta_{W,\Z}$, and can be achieved in a canonical way using a construction of de Boor. Although we only use it for a specific interpolation problem (Hermite interpolation with first order derivatives only), we describe here in more generality. 

An interpolation problem over a space $\Ss \subset \Pi^d$ looks for a polynomial $P \in \Ss$ solution of a system of equations 
\begin{equation}\label{eq-poly-interp-prob}
F_r(P)=c_r \quad \text{for} \quad r=1,\ldots,Q
\end{equation}
 for some $c \in \RR^Q$, and where $\Ff = (F_r)_{r=1}^Q$ are linear forms. Of interest for us are differential forms evaluated at the positions $(z_i)_{i=1}^N$
\eql{\label{eq-diff-forms}
	\foralls i=1,\ldots,N, \quad
	\foralls j=1,\ldots,k_i,  
	\quad F_{(i,j)}(P) = \Big( P_{i,j}(\partial)[P] \Big)(z_i)
}
where we denoted $r=(i,j)$ the index, $Q=\sum_i k_i$, and $(P_{i,j})_{i,j}$ are given polynomials. 

As an example, the Hermite interpolation problem in dimension $d$ uses the monomials $\{ P_{i,j} \}_j = \{ X^{\al} \}_{|\al| \leq k_i}$ up to a fixed degree $k_i$, where $|\al|=\sum_{s=1}^d \al_s$ is the degree of the monomial. 
Lagrange interpolation is the special case where $n_i=0$, and to account for the constraints appearing in~\eqref{eq-defn-etaV}, we need to set $k_i=1$, so that $Q=(d+1)N$.  

An important question is how one should choose the subspace $\Ss$ for this problem to be regular, i.e. have an unique solution for any choice of right hand side $c$ in \eqref{eq-poly-interp-prob}. In the univariate case $d=1$, one can always choose $\Ss =\Pi^1_n$ where the degree is $n=Q$. However, the situation is much more complicated in the multivariate case because one cannot always choose $\Ss = \Pi^d_n$ for some $n\in \NN_0$.
To understand the issues here, first note that since $\dim \Pi_n^d = \binom{d+n}{d}$ and the number of partial derivatives to interpolate at $z_i$ is $\binom{d+k_i}{d}$, we would need $n$ to satisfy
\begin{equation}\label{eq:necc_interp}
	\binom{d+n}{d} = \sum_{i=1}^N \binom{d+k_i}{d}.
\end{equation}
For instance, for the Lagrange interpolation at $\Z = \{z_1,z_2\} \subset \RR^2$ (so $d=2$), there does not exist an integer $n$ such that~\eqref{eq:necc_interp} holds since the number of interpolation conditions is 2 while $\abs{\Pi_0^2}=1$ and $\abs{\Pi_1^2}=3$. Furthermore, in the case of Hermite interpolation at $\Z$ with $k_i=1$, although choosing $\Ss = \Pi_2^2$ would satisfy~\eqref{eq:necc_interp}, interpolation with this space is in fact singular for all choices of $z_1$ and $z_2$ \cite{lorentz2000multivariate}.




In \cite{de1992least,de1992computational}, given a finite set of linear functionals $\Ff = \{F_q\}_q$, de Boor and Ron established a general technique for finding an appropriate polynomial space $\Ss=\Ss_\Ff$, so that the interpolation problem is regular, i.e. such that for any $c \in \CC^Q$, there exists a unique element  $P \in \Ss_\Ff$  such that \eqref{eq-poly-interp-prob} holds. This space is defined through the use of the least term in formal expansion of exponential forms.

\begin{defn}[Least term]
	Let $g$ be a real-analytic function on $\RR^d$ (or at least analytic at $x=0$, so that $g(x) = \sum_{\abs{\al}=0}^\infty a_\al x^\al$). Let $\al_0$ be the smallest integer $\abs{\al}$ such that $a_\al\neq 0$. Then the least term $g_\downarrow$ of $g$ is $g_\downarrow = \sum_{|\al|=\al_0} a_\al x^\al$.
\end{defn}


\begin{defn}[Exponential space]
For a linear functional $F\in (\Pi^d)'$, we define the formal power series 
\eq{
	g_F(x) \eqdef F(e^{\dotp{\cdot}{x}}) \eqdef \sum_{\abs{\al}=0}^\infty \frac{F(p_\alpha)}{j!} x^\al.
}
where $p_\alpha \eqdef x\mapsto x^\alpha$.
Given functionals $\Ff = \{F_q\}_{q}$ of the form~\eqref{eq-diff-forms}, we define the space
\begin{align*}
	\exp_\Ff &\eqdef \Span\enscond{g_F}{F\in \Ff}, \\
	\Ll_\Ff &\eqdef \Span\enscond{g_\downarrow}{g\in \exp_\Ff}.
\end{align*}
The polynomial space $\Ll_\Ff$ is called the least interpolant space.
\end{defn}

The main theorem of~\cite{de1992least} asserts that this space $\Ll_\Ff$ defines regular interpolation problems. Note that of course $\Ll_\Ff$ depends on the positions $\Z$. We recall below this theorem, stated as in \cite{lorentz2000multivariate}.

\begin{thm}[\cite{lorentz2000multivariate}]
Let $\Ff = \{F_q\}_{q=1}^Q$ be functionals of the form~\eqref{eq-diff-forms}. 
Then, for any $c \in \RR^Q$, there exists a unique $P \in \Ll_\Ff$ with 
\eq{
	\foralls r=1,\ldots,Q, \quad
	F_r(P) = c_r.
}
\end{thm}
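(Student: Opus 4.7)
The plan is to recast the theorem as bijectivity of the linear map $\Theta: P \in \Ll_\Ff \mapsto (F_r(P))_{r=1}^Q \in \RR^Q$, and to prove it via a dimension count $\dim \Ll_\Ff = Q$ combined with non-degeneracy of a suitable bilinear pairing. The central tool is the Fischer pairing
\[
\langle P,\, g\rangle \eqdef (P(\partial)g)(0) = \sum_\al \al!\, a_\al\, b_\al,
\]
between polynomials $P = \sum_\al a_\al x^\al \in \Pi^d$ and formal power series $g = \sum_\al b_\al x^\al$. Two elementary properties do all the work: (a) distinct monomials are orthogonal, so $\langle P,g\rangle$ only depends on the matched-degree homogeneous components of $P$ and $g$, and in particular $\langle P,g\rangle = \langle P, g_\downarrow\rangle$ whenever $P$ is homogeneous of degree $\deg g_\downarrow$; and (b) unfolding the definition of $g_F$ gives $F(P) = \langle P, g_F\rangle$ for every $F\in(\Pi^d)'$ and every polynomial $P$, so that $F\mapsto g_F$ linearly identifies $\Span \Ff$ with $\exp_\Ff$.

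\textbf{Step 1 (dimension).} By construction, $\Ll_\Ff$ is spanned by the homogeneous polynomials $\{g_\downarrow : g \in \exp_\Ff\}$, hence is a graded subspace $\Ll_\Ff = \bigoplus_n (\Ll_\Ff)_n$, with $(\Ll_\Ff)_n$ spanned by those $g_\downarrow$ of degree $n$. Introduce the valuation filtration $E_n \eqdef \{g \in \exp_\Ff : b_\al = 0 \text{ for all } |\al| < n\}$ on $\exp_\Ff$. The map $E_n \to (\Ll_\Ff)_n$ sending $g$ to its degree-$n$ homogeneous part has kernel $E_{n+1}$ and is surjective, since any $P = \sum_i c_i(g_i)_\downarrow \in (\Ll_\Ff)_n$ with $\deg(g_i)_\downarrow = n$ is the degree-$n$ part of $\sum_i c_i g_i \in E_n$. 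Summing over $n$ yields $\dim \Ll_\Ff = \dim \exp_\Ff = \dim \Span \Ff = Q$, the last equality using the (implicit) linear independence of the functionals~\eqref{eq-diff-forms}.

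\textbf{Step 2 (non-degeneracy and conclusion).} Consider the bilinear pairing $B: \Ll_\Ff \times \exp_\Ff \to \RR$ defined by $B(P,g) = \langle P,g\rangle$. It is non-degenerate on the right: for a nonzero $g \in \exp_\Ff$, the homogeneous polynomial $g_\downarrow$ lies in $\Ll_\Ff$, and property (a) gives
\[
B(g_\downarrow, g) = \langle g_\downarrow, g_\downarrow\rangle = \sum_{|\al|=\deg g_\downarrow} \al!\,(g_\downarrow)_\al^2 > 0.
\]
Because $\dim \Ll_\Ff = \dim \exp_\Ff$ by Step 1, standard linear algebra (the transpose of an injective linear map between finite-dimensional spaces of equal dimension is injective) upgrades right non-degeneracy to left non-degeneracy of $B$. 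Now if $P \in \Ll_\Ff$ satisfies $\Theta(P) = 0$, property (b) and linearity give $\langle P, g\rangle = 0$ for every $g \in \exp_\Ff$, and left non-degeneracy forces $P = 0$. This is injectivity of $\Theta$, which combined with the dimension count yields bijectivity and hence the theorem.

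\textbf{Main obstacle.} The only genuine insight is the choice of the Fischer pairing at the outset: it is precisely the bilinear form that simultaneously encodes every functional $F$ as evaluation against $g_F$ (property (b)) and makes the least-term operator $g \mapsto g_\downarrow$ act as an orthogonal projection onto the matched-degree stratum (property (a)). Once this pairing is on the table, Step 1 is essentially tautological and Step 2 reduces to the elementary duality of finite-dimensional bilinear forms. The heavier Gaussian-elimination construction alluded to in the paper is the algorithmic face of this duality: row-reducing the Taylor expansions of the $g_{F_q}$ produces both an explicit basis of $\Ll_\Ff$ and a triangularisation of $B$, which is what underlies the basis $\{P_r\}$ invoked in Remark~\ref{rem-comp-etaW-multid}.
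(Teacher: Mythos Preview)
The paper does not prove this theorem; it is quoted verbatim from the literature (de Boor--Ron, via \cite{lorentz2000multivariate}) and used as a black box. So there is no ``paper's proof'' to compare against.

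That said, your argument is correct and is essentially the standard proof one finds in de Boor--Ron's original papers. The Fischer pairing $\langle P,g\rangle=(P(\partial)g)(0)$ is exactly the device they use; your Step~1 is the usual ``associated graded'' count showing $\dim\Ll_\Ff=\dim\exp_\Ff$, and your Step~2 is the standard non-degeneracy argument via $\langle g_\downarrow,g\rangle=\langle g_\downarrow,g_\downarrow\rangle>0$. One small point worth making explicit: the equality $\dim\Span\Ff=Q$ you invoke at the end of Step~1 is genuinely an assumption on the functionals~\eqref{eq-diff-forms} (linear independence of the $F_q$), not a consequence of their form alone; without it the existence half of the theorem fails. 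For the Hermite-type functionals actually used in the paper (distinct nodes $z_i$, and for each $i$ the polynomials $\{P_{i,j}\}_j$ linearly independent), this independence is classical and easy to verify, so the gap is only expository.
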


\begin{exmp}\label{2spikes_init_examp}
Let us consider an example presented in \cite{lorentz2000multivariate}, which is also serves as an explanatory example throughout this article.

Consider the problem of Hermite interpolation at $\Z = \{ z_1\eqdef (0,0), z_2\eqdef (0,1)\}$.  The 6 linear functionals are
$$
\Ff = \{ F_{0,i}: p\mapsto p(z_i), \; F_{1,i}: p\mapsto \partial_x p(z_i), \; F_{2,i}: p \mapsto \partial_y p(z_i) \}_{i=1,2}.
$$
Then, the corresponding exponential functions are
\begin{align*}
& g_{F_{0,1}}(x,y) = 1, \quad  g_{F_{1,1}}(x,y) =x, \quad g_{F_{2,1}}(x,y) =y,\\
&g_{F_{0,2}}(x,y) = e^y,\quad g_{F_{1,2}}(x,y) = x e^y, \quad g_{F_{2,2}}(x,y)=ye^y,
\end{align*}
and
$
\Ll_\Ff = \Span\{1,x,y,xy, y^2, y^3\}.
$
\end{exmp}

The polynomial space constructed by de Boor and Ron preserves many properties of univariate interpolation, but notably,  $\Ll_\Ff$ is of least degree in the sense that for any other space $\Ss$ leading to a regular interpolation problem, 
$$
	\dim(\Ss \cap \Pi_n^d)\leq \dim(\Ll_\Ff \cap \Pi^d_n), \qquad \forall n \in \NN_0.
$$
Furthermore, if we write $\Ss_{\Z} \eqdef \Ll_\Ff$ to be the least interpolant spaced associated to $\Ff$ defined in~\eqref{eq-diff-forms} (to highlight the dependency on $\Z$), then we have that
for all invertible matrices  $A \in \RR^{d\times d}$ and $x \in \RR^d$, 
\eq{
	\Ss_{A\Z+x} = \Ss_{\Z} \circ A^\top = \enscond{ P \circ A^\top }{ P \in \Ss_{\Z} }, 
} 
where we denoted $A\Z+x=(A z_i+x)_i$, $A^\top$ the transpose matrix.

\subsubsection{The de Boor Basis $\Bb_\Ff$ of $\Ll_\Ff$}
\label{sec-de-boor}

As highlighted already in Remark~\ref{rem-comp-etaW-multid}, to be useful from a computational point of view, it is  important to describe an interpolation space $\Ss$ using a basis of polynomial $(P_r)_{r=0,\ldots, Q-1}$.
In the case of the least interpolant space $\Ss_\Z=\Ll_\Ff$ for Hermite functionals $\Ff$ defined in~\eqref{eq-diff-forms}, algorithms for finding such a basis are presented  in~\cite{de1992computational}.
For simplicity, we describe one of the proposed algorithms in the special case of 2-D Hermite interpolation with the first order derivative, i.e. the case $k_i=1$, as this is the one of interest for us. However, it extends to verbatim in the general settingwith interpolation conditions defined via general differential forms \eqref{eq-diff-forms}, see ~\cite{de1992computational}  for further details.
%
%
%

%
%
%

The basic procedure of computing a basis of $\Ss_{\Z}$ for $\Z\in \Xx^N$ can be summarised as follows.
\begin{alg}\label{thm:deboor} \cite[Thm. 2.7]{de1992computational}
\begin{enumerate}
\item  By identifying each polynomial with its coefficients, define the Hermite intepolation operator $V_{\Z}: \Pi_n \to \RR^{3N}$ as an infinite dimensional matrix (with infinitely many columns indexed by $\NN_0^2$ and $Q$ columns indexed by $\{1,\ldots, Q\}$) 
$$
	V_{\Z}: a \mapsto \left( (P_a(z_i))_{i=1}^N, (\partial_{x_1} P_a(z_i))_{i=1}^N, (\partial_{x_2} P_a(z_i))_{i=1}^N \right) \in \RR^{Q},
$$
where $P_a = \sum_{\alpha\in \NN_0^2} a_\al X^\al$.
\item Perform Gaussian elimination with partial pivoting \cite{trefethen1997numerical} to obtain the decomposition $V = LW$, where $L\in \RR^{Q \times Q}$ is an invertible matrix and $W \in \RR^{3N\times \NN_0^2}$ is in row reduced echelon form. 
\item For each row $j$ of $W$, let $\beta_j$ be the first index of $W_{j,\cdot}$ such that $W_{j,\alpha}\neq 0$. Define 
\eq{
	\foralls j=0,\ldots,Q-1, \quad 
	P_j(X) \eqdef \sum_{\abs{\al} = \abs{\beta_j}} \frac{1}{\al!} W_{j,\al} X^\al.
} 
\end{enumerate}
Then, $\{P_j\}_{j=1}^{Q-1}$ defines a basis of $\Ss_{\Z}$.

\end{alg}

\begin{rem}\label{rem:V_z_restr}
It is in fact sufficient to restrict $V_{\Z}$ to the polynomial space $\Pi_{2N-1}$ because Hermite interpolation on $N$ nodes is always regular on  $\Pi_{2N-1}$  \cite[Theorem 19]{lorentz2000multivariate}. Therefore, since $\abs{\Pi_{2N-1}} = 2N^2+N$, a basis of $\Ss_{\Z}$ can be computed in $O(Q^2 N^2)$ operations.
In particular, we can replace $V_{\Z}$ by $\tilde V_\Z \in \RR^{Q\times (2N^2+N)}$ where
$$
\tilde V_{\Z} \eqdef \begin{pmatrix}
\left( z_l^\alpha \right)_{l\in[N],  \abs{\alpha}\leq 2N-1}\\
\left( z_l^{\alpha-(1,0)} \right)_{l\in[N],  \abs{\alpha}\leq 2N-1}\\
\left( z_l^{\alpha-(0,1)} \right)_{l\in[N],  \abs{\alpha}\leq 2N-1}
\end{pmatrix}.
$$
Note that as a result, in Step 2, $W$ is a $Q\times (2N^2+N)$ matrix.

\end{rem}

%
%

\begin{rem} \label{rem:deboor_basis}
The main result of the paper~\cite{de1992computational} also presents a more sophisticated construction of a basis $\Ss_{\Z}$, based on Gaussian elimination after appropriately grouping together columns of $V_{\Z}$. That approach has the advantage that the resultant basis is numerically more stable and is orthogonal with respect to the product  $\dotp{P}{Q}_B \eqdef (P(\partial)[Q])(0)$. This basis will be referred to as \textbf{the de Boor basis}.
In the following section, we shall establish a precise link between the least interpolant space and $\eta_W$ using Theorem \ref{thm:deboor}. We emphasize, however, that although an explicit basis is useful for computational purposes, it is rather the space $\Ss_{\Z}$ which determines $\eta_W$.
\end{rem}

\begin{exmp}
Returning to Example \ref{2spikes_init_examp} where  $\Z = \{(0,0), (0,1)\}$, we have that
$
V_\Z = LW,
$
where
$$L\eqdef \begin{pmatrix}
1 & 0 & 0 & 0 & 0 & 0\\
1 & 1 & 0 & 1/2 & 0 & 1/6\\
0 & 0 & 1 & 0 & 0 & 0\\
0 & 0 & 1 & 0 & 1 & 0\\
0 & 1 & 0 & 0 & 0 & 0\\
0 & 1 & 0 & 1 & 0 & 1/2
\end{pmatrix}
\quad \text{and} \quad
W: a\mapsto \begin{pmatrix}
a_{0,0}\\
a_{0,1}\\
a_{1,0}\\
a_{0,2}+ \sum_{j\geq 4} \frac{(6-2j)}{j!} a_{j,0} \\
a_{1,1} + \sum_{j\geq 2} \frac{1}{j!} a_{1,j}\\
a_{0,3} + \sum_{j\geq 4} \frac{(6j-12)}{j!} a_{0,j} 
\end{pmatrix}.
$$
By Theorem \ref{thm:deboor}, a basis of $\Ll_\Ff$ is therefore 
 $\Bb_\Ff = \{1,y,x,y^2,xy, y^3\}$. 
\end{exmp}

\begin{exmp}[Further examples]\label{exp:further-etaW}
In the following, we write $x^j y^k$ for the polynomial $(x,y)\mapsto x^jy^k$.
\begin{itemize}
\item When $\Z = \enscond{ (a_j,0) }{j=1,\ldots, N}$,
$$
	\Bb_\Ff = \{ 1,x,x^2,\ldots, x^{2N-1}, y, xy,\ldots, x^{N-1}y\}
$$
is a basis for $\Ss_{\Z}$.
\item When $\Z= \{(0,1), (0,0), (1,0)\}$, 
$$
	\Bb_\Ff = \{1,x,y,x^2,y^2,xy, x^3, y^3, x^2y - y^2x\}.
$$
\item When $\Z = \{(1,1),(1,-1),(-1,1),(-1,-1)\}$,
$$
	\Bb_\Ff = \{1,x,y,x^2,y^2,xy, x^3,y^3,x^2 y,y^2x, x^3y, y^3,x\}.
$$
\end{itemize}
\end{exmp}

\subsubsection{The Limiting Certificate}
\label{sec-limiting}

With the construction of the least interpolant space at hand, we are now ready to explicitly define the limit $\eta_{W,\Z}$ of $(\eta_{V,t\Z})_{t>0}$ as $t\to 0$. We use for this the following interpolation space for spikes $\Z=(z_i)_{i=1}^N$
\eql{\label{eq-interp-space-etaw}
	\Ss_{\Z} \eqdef \Ll_\Ff
	\quad\text{where $\Ff$ is defined in~\eqref{eq-diff-forms} with } \foralls i, n_i=1.
}

To begin with, let us define an operator which will be useful for establishing the technical results of this paper.
Let
\begin{equation}\label{eq:Gamma}
\Gamma_{t\Z}:\RR^{3N} \to \Hh, \qquad \Gamma_{t\Z} \begin{pmatrix}
a\\b\\c\\
\end{pmatrix} \eqdef
\sum_{j=1}^{N} a_j \varphi(tz_j) + \sum_{j=1}^N  b_j \partial_{x_1}\varphi(tz_j) + \sum_{j=1}^N  c_j \partial_{x_2}\varphi(tz_j)
\end{equation}
Note that the precertificates can be written as $p_{V,t\Z} = \Gamma_{t\Z}^{*,\dagger}\binom{1_N}{0_{2N}}$.
Moreover,  observe that given $p\in \Hh$, by identifying $p$ with the  coefficients of the Taylor expansion of $(\Phi^* p)(t\cdot)$ around 0, that is $P =  \left(\dotp{p}{\frac{t^{\abs{\alpha}}}{\alpha!}\partial^\alpha\varphi(0)}\right)_{\alpha\in \NN_0^2}$, we can associate $\Gamma_{t\Z}$ with the Hermite interpolation matrix $V_\Z$ (as defined in Procedure \ref{thm:deboor}) via
\begin{equation}\label{Gammaz_Vz}
\Gamma_{t\Z}^* p =\diag((1_N, t^{-1} 1_{2N}) V_{\Z} P. 
\end{equation}


\begin{thm} \label{thm-conv-etaw}
Let $\Ss_{\Z}$ be the least interpolant space defined in ~\eqref{eq-interp-space-etaw} and suppose that 
$\enscond{h(\partial)\varphi(0)}{h\in \Ss_{\Z}}$ is of dimension $3N$. Then, for $\Z \in \Xx^N$, one has
\eq{
	\norm{p_{V,t\Z}  - p_{W,\Z}}_{\Hh} = \Oo(t)
	\qandq
	\norm{\eta_{V,t\Z} - \eta_{W,\Z}}_{L^\infty(\Xx)} = \Oo(t)
}
where $p_{W,\Z}$ and $\eta_{W,\Z}$ are defined in~\eqref{eq-defn-etaW-multidim} using  $\Ss_{\Z}$.
\end{thm}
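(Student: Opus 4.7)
The strategy is to recast both $p_{V,t\Z}$ and $p_{W,\Z}$ as minimum-$\Hh$-norm solutions of linear systems in $\Hh$ whose constraint operators are related, after a $t$-dependent change of basis suggested by Algorithm~\ref{thm:deboor}, by a perturbation of size $O(t)$; the result then follows from the smoothness of the pseudoinverse on the open set of injective operators.

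First I would use the definition~\eqref{eq:Gamma} of $\Gamma_{t\Z}$ to write $p_{V,t\Z} = \Gamma_{t\Z}(\Gamma_{t\Z}^{\ast}\Gamma_{t\Z})^{-1} e_{1}$ where $e_{1} \eqdef (1_N, 0_{2N})^{\top}$. On the limit side, let $\Bb = \{P_{0}, \ldots, P_{Q-1}\}$ (with $P_{0} = 1$) denote the de Boor basis of $\Ss_{\Z}$ produced by Algorithm~\ref{thm:deboor}. I would introduce the ``limit operator''
\eq{
	\Gamma^{0}_{\Z} : \RR^{Q} \to \Hh, \qquad c \mapsto \sum_{r=0}^{Q-1} c_{r}\, P_{r}(\partial)\varphi(0),
}
which is injective by the hypothesis that $\{h(\partial)\varphi(0) : h \in \Ss_{\Z}\}$ has dimension $3N$. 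Writing the KKT conditions of the minimum norm problem~\eqref{eq-defn-etaW-multidim} in the basis $\Bb$ yields $p_{W,\Z} = \Gamma^{0}_{\Z}\bigl((\Gamma^{0}_{\Z})^{\ast}\Gamma^{0}_{\Z}\bigr)^{-1} e_{1}$, since the constraint $P_{0} = 1$ contributes the $1$ in the right-hand side and the remaining $P_{r} \in \bSs_{\Z}$ contribute the zeros.

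The core of the proof is to exhibit an invertible $T_{t} \in \RR^{Q \times Q}$, depending continuously on $t$, such that $\|\Gamma_{t\Z} T_{t} - \Gamma^{0}_{\Z}\|_{\RR^{Q} \to \Hh} = O(t)$ and $\|T_{t}^{\ast} e_{1} - e_{1}\|_{\RR^{Q}} = O(t)$. This is where Algorithm~\ref{thm:deboor} enters: the decomposition $\tilde V_{\Z} = L W$ gives an invertible $L \in \RR^{Q \times Q}$ and a row-echelon $W$ whose $j$-th row has ``leading'' multi-index $\beta_{j}$ corresponding to the polynomial $P_{j}$. Using the identification~\eqref{Gammaz_Vz} and Taylor expanding $\varphi(t z_{j})$ and $\partial_{x_{k}} \varphi(t z_{j})$ around $0$, I would take $T_{t} \eqdef \diag(1_{N}, t\, 1_{2N}) L^{-\top} D_{t}^{-1}$ with $D_{t} \eqdef \diag(t^{|\beta_{j}|})_{j}$. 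Column by column, the factor $t^{|\beta_{j}|}$ extracted from the Taylor expansion is exactly cancelled by $D_{t}^{-1}$, the conjugation by $L^{-\top}$ selects the correct linear combination of derivatives of $\varphi$ at $0$ prescribed by the $j$-th row of $W$, and the passage to the ``least term'' $P_{j}$ produces the claimed leading order $P_{j}(\partial)\varphi(0)$, with a Taylor remainder of order $t$ in $\Hh$ (owing to the smoothness of $\varphi$). The row associated to $P_{0} = 1$ has degree $0$, so the coordinate of $T_{t}^{\ast} e_{1}$ in that row remains $1$, the others being $O(t)$.

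Once this change of basis is in place, injectivity of $\Gamma^{0}_{\Z}$ ensures that $(\Gamma_{t\Z} T_{t})^{\ast} (\Gamma_{t\Z} T_{t})$ is invertible with uniformly bounded inverse for $t$ small, and smoothness of matrix inversion gives
\eq{
	p_{V,t\Z} = (\Gamma_{t\Z} T_{t}) \bigl((\Gamma_{t\Z} T_{t})^{\ast} (\Gamma_{t\Z} T_{t})\bigr)^{-1} (T_{t}^{\ast} e_{1}) = p_{W,\Z} + O(t)
}
in $\Hh$. The $L^\infty$ bound follows at once from $\eta_{V,t\Z} - \eta_{W,\Z} = \Phi^{\ast}(p_{V,t\Z} - p_{W,\Z})$ together with $\|\Phi^{\ast} q\|_{L^{\infty}(\Xx)} \leq \sup_{x \in \Xx} \|\varphi(x)\|_{\Hh} \cdot \|q\|_{\Hh}$, finite by smoothness of the kernel. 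The main obstacle is Step~2: one has to carefully match the combinatorial output of Algorithm~\ref{thm:deboor} (the pivot degrees $|\beta_{j}|$ and the matrix $L$) with the intrinsic $t$-scaling $\diag(1_{N}, t^{-1} 1_{2N})$ of $\Gamma_{t\Z}^{\ast}$, so that each column of $\Gamma_{t\Z} T_{t}$ genuinely has $P_{j}(\partial)\varphi(0)$ as its leading term rather than a lower-degree polynomial that would collapse the limit; the nondegeneracy hypothesis on $\{h(\partial)\varphi(0) : h \in \Ss_{\Z}\}$ is precisely what prevents such a collapse.
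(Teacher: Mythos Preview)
Your argument is correct and essentially identical to the paper's: both use the Gaussian elimination $V_\Z = LW$ from Algorithm~\ref{thm:deboor}, extract the diagonal scaling $\diag(t^{|\beta_j|})_j$ from the Taylor expansion, exploit that $L^{-1}\binom{1_N}{0_{2N}} = \delta_Q$ (because the first column of $V_\Z$ equals $\binom{1_N}{0_{2N}}$), and then invoke the full-rank hypothesis to perturb the pseudoinverse; the paper works on the adjoint side, rewriting $\Gamma_{t\Z}^* p = \binom{1_N}{0_{2N}}$ as $(\Psi^* + O(t))p = \delta_Q$, which is exactly the transpose-inverse of your change of basis $T_t$. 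One notational slip to fix: the right-hand side for $p_{W,\Z}$ in the de Boor basis is $\delta_Q = (1,0,\ldots,0)^\top$, not $e_1 = \binom{1_N}{0_{2N}}$, so the statement you need (and in fact get exactly, as your own subsequent description confirms) is $T_t^* e_1 = \delta_Q$, not $\|T_t^* e_1 - e_1\| = O(t)$.
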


\begin{rem}\label{rem:other-diff-forms}
In this article, we are interested in the limit of $\eta_{V,t\Z}$ which are defined using Hermite interpolation conditions at $t\Z$. Note however that  this result  holds also for the limit of certificates  defined via other differential forms. In particular, given any linear subspace of polynomials $\overline \Ss$ such that $P\in \overline\Ss$ implies that $P(0) =0$, if
$$
\tilde p_{V,t\Z} \eqdef \enscond{\norm{p}}{(\Phi^*p)(t z_i) = 1, \; P(\partial)(\Phi^* p)(t z_i)= 0,\; \forall x\in \Z, \; P\in \overline\Ss},
$$
then $\norm{\tilde p_{V,t\Z} - \tilde p_{W,\Z}}_\Hh  = \Oo(t)$, where $\tilde p_{W,\Z}$ is defined through \eqref{eq-defn-etaW-multidim} using the least interpolation space associated with $\overline \Ss \cup \{x\mapsto 1\}$.
\end{rem}

\begin{proof}
Suppose that we decompose $V_{\Z}$ via Gaussian elimination so that $V_{\Z} = LW$, where $L$ is  invertible  and $W$ is in row-reduced echelon form. Let $(\beta_j)_{j=0}^{Q-1}$ be as in Step 3 of Procedure \ref{thm:deboor}. Then, using the representation of $\Gamma_{t\Z}$ from \eqref{Gammaz_Vz}, we have that
$$
\Gamma_{t\Z}^* p  = \binom{1_N}{0_{2N}} \iff  V_{\Z} P =  \binom{1_N}{0_{2N}} \iff W P =  L^{-1} \binom{1_N}{0_{2N}}.
$$
Note that since the first $N$ entries of the first column of $V_{\Z}$ are all 1's, we have that $L^{-1} \binom{1_N}{0_{2N}}  = \delta_{3N}$.
By definition of the $\beta_i$'s,  we have that
\begin{equation}\label{eta_w_proof_eq1}
W P = \left( t^{ \abs{\beta_i}} \sum_{\abs{\al} = \abs{\beta_i}}  W(z_i, \al)\dotp{\partial^\al \phi(0)}{p} /\al! + \Oo(t^{ \abs{\beta_i}+1}) \right)_{i=1}^{3N}.
\end{equation}
Let $\Psi^*: \Hh\to \RR^{3N}$ be defined by
$$
\Psi^* p =\left(( h_i(\partial)[\Phi^* p])(0)\right)_{i=1}^{3N},
$$
where 
$$
\{h_i\}_{i=1}^{3N} \eqdef \enscond{X \mapsto \sum_{\abs{\al} = \abs{\beta_i}}  \frac{W_{z_i,\alpha}}{\alpha!} X^\alpha }{i=1,\ldots,3N} 
$$
is known to be a basis of $\Ss_{\Z}$ from Theorem \ref{thm:deboor}. 
Therefore, from \eqref{eta_w_proof_eq1}, there exists an operator  $\tilde \Psi_t^*: \Hh \to \RR^{3N}$  with $\tilde \Psi_t = \Oo(t)$ such that
$$
W P = \diag((t^{\abs{\beta_i}})_{i=1}^{3N})\left( \Psi^* p + \tilde \Psi_t^* p \right).
$$
Therefore, 
$$
\Gamma_{t\Z}^* p \binom{1_N}{0_{2N}}\iff \Psi^* p + \tilde \Psi_t^* p = \delta_{3N}
$$
and $p_{V,t\Z} = (\Psi^* + \tilde \Psi_t^*)^\dagger \delta_{3N} = \Psi^{*,\dagger} \delta_{3N} + \Oo(t)$ whenever $\Psi$ is full rank.
Finally, since the first row of $V_{\Z}$ coincides with the first row of $W$ (thanks to the fact that the top left entry of $V_{\Z}$ is 1), $h_1\equiv 1$. Therefore, $p_{W,\Z}$ is as defined in~\eqref{eq-defn-etaW-multidim} using  $\Ss_{\Z}$.  The final claim of this theorem is due to the following inequality:
$$
\norm{\eta_{W,\Z}-\eta_{V,t\Z}}_{L^\infty(\Xx)} = \sup_{x\in\Xx} \abs{\dotp{\varphi(x)}{p_{W,\Z}- p_{V,t\Z}}} \leq \sup_{x\in\Xx}\norm{\varphi(x)}_\Hh \norm{p_{W,\Z}- p_{V,t\Z}}_\Hh.
$$
\end{proof}


\begin{rem}[Computation of $\eta_{W,\Z}$]
	With this theorem~\ref{thm-conv-etaw}, it is now simple to compute $\eta_{W,\Z}$ using the scheme detailed in Remark~\ref{rem-comp-etaW-multid} and the de Boor basis $\Bb_\Ff = \{P_r\}_{r=0}^{Q-1}$ in formula~\eqref{eq-etaw-highdim-corr}.
\end{rem}

A key assumption of Theorem \ref{thm-conv-etaw} is on the dimension of the space $\enscond{h(\partial)\varphi(0)}{h\in \Ss_{\Z}}$. The following proposition shows that this is satisfied for convolution kernels of sufficiently large bandwidth. 
\begin{prop}\label{prop:lin_indep}
Let $\Phi$ be a convolution operator with $\varphi(x) =  \psi(\cdot -y)$. Let $L$ be the smallest integer such that $\Pi_L^2 \supseteq \Ss_{\Z}$. Suppose that $\hat \psi(\alpha)\neq 0$  for all $\al\in\NN_0^2$ such that $\abs{\al}\leq L$. Then, given any $z\in \Xx^N$, 
$\enscond{h(\partial)\varphi(0)}{h\in \Ss_{\Z}}$ is of dimension $3N$. Furthermore, $L\leq 2N-1$.

\end{prop}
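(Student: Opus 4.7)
The claim that $\enscond{h(\partial)\varphi(0)}{h\in\Ss_{\Z}}$ has dimension $3N$ is equivalent to injectivity of the linear map $\mathcal T : \Ss_{\Z} \to \Hh$, $h \mapsto h(\partial)\varphi(0)$, since $\dim\Ss_{\Z} = 3N$ by construction of the least interpolant space. The plan is therefore to show that $\mathcal T(h) = 0$ forces $h \equiv 0$, and then to verify the degree bound $L \leq 2N-1$ separately.

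For the first step I would pass to Fourier coefficients on the torus. Writing $\varphi(x) = \psi(\cdot - x)$ and differentiating gives $\partial^\alpha \varphi(0) = (-1)^{|\alpha|} \partial^\alpha \psi$, whose $\omega$-th Fourier coefficient equals $(-2\pi i \omega)^\alpha \hat\psi(\omega)$. Writing $h(X) = \sum_\alpha a_\alpha X^\alpha$ and introducing the companion polynomial $\tilde h(Y) \eqdef h(-2\pi i Y)$, which has the same total degree as $h$, the $\omega$-th Fourier coefficient of $\mathcal T(h)$ is precisely $\tilde h(\omega)\hat\psi(\omega)$. Assuming $\mathcal T(h) = 0$ in $\Hh$ and invoking the hypothesis $\hat\psi(\alpha) \neq 0$ for every $\alpha \in \NN_0^2$ with $|\alpha| \leq L$ then yields $\tilde h \equiv 0$ on the principal lattice $\Lambda_L \eqdef \{\alpha \in \NN_0^2 : |\alpha| \leq L\}$.

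The second step is to establish unisolvence of $\Lambda_L$ for $\Pi_L^2$. Note first that $\tilde h \in \Pi_L^2$ because $h \in \Ss_{\Z} \subseteq \Pi_L^2$ by definition of $L$, and the cardinalities match: $|\Lambda_L| = \binom{L+2}{2} = \dim \Pi_L^2$. Unisolvence follows by induction on $L$. The restriction of $\tilde h$ to the diagonal $X_1 + X_2 = L$ is a univariate polynomial of degree $\leq L$ vanishing at the $L+1$ points of $\Lambda_L$ lying on that line, hence is identically zero. By polynomial division this produces a factorization $\tilde h(X_1, X_2) = (X_1 + X_2 - L)\, q(X_1, X_2)$ with $q \in \Pi_{L-1}^2$. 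For any $\alpha \in \Lambda_{L-1}$ one has $\alpha_1 + \alpha_2 - L \neq 0$, so $\tilde h(\alpha) = 0$ forces $q(\alpha) = 0$; the inductive hypothesis then gives $q \equiv 0$, hence $\tilde h \equiv 0$, hence $h \equiv 0$.

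Finally, the degree bound $L \leq 2N-1$ is built into the de Boor construction: Remark~\ref{rem:V_z_restr} shows that running Procedure~\ref{thm:deboor} on the restricted matrix $\tilde V_{\Z}$, whose columns are indexed by $|\alpha|\leq 2N-1$, outputs a basis of $\Ss_{\Z}$ consisting of polynomials of total degree at most $2N-1$, so $\Ss_{\Z} \subseteq \Pi_{2N-1}^2$. The only nontrivial ingredient in the whole argument is the unisolvence of the principal lattice; everything else is Fourier-side bookkeeping combined with the factorization step, which is where I expect the main (though classical) obstacle to sit.
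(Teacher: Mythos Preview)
Your proof is correct and follows essentially the same route as the paper: pass to Fourier coefficients, reduce $\mathcal T(h)=0$ to the vanishing of a polynomial of degree $\leq L$ on the principal lattice $\Lambda_L$, and invoke unisolvence of $\Lambda_L$ for $\Pi_L^2$. The only substantive difference is that the paper first establishes linear independence of the full set $\{\partial^\alpha\varphi(0):|\alpha|\leq L\}$ and then specialises to $\Ss_{\Z}$ via a coefficient-matrix argument, citing \cite{lorentz2000multivariate} for the unisolvence; you instead work directly on $\Ss_{\Z}$ and supply the classical line-peeling induction for unisolvence yourself, which makes your version self-contained.
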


The proof of this proposition can be found in Appendix~\ref{proof:prop:lin_indep}.

\subsection{Necessity of  $\eta_W$}
\label{sec-necessity-nondegen}

The following result shows that if there is support stability for $m_{a,t\Z}$ for all $t$ sufficiently small and $\Z$ sufficiently close to $\Z_0$, then $\eta_{W,\Z}$ must be a valid certificate. In particular, if $\eta_{W,\Z}(x)>1$ for some $x\in\Xx$, then under arbitrarily small noise and regularization parameter $\lambda$,  \ref{eq-blasso} will produce solutions with additional small spikes (see Section \ref{sec-numerics}).

\begin{thm}
Suppose that $\enscond{h(\partial)\varphi(0)}{h\in\Ss_{\Z_0}}$ is of dimension $3N$ and that $\varphi \in \Cder{2}(\Xx)$.
Suppose that there exists $t_n\to 0$ and $(a_n,\Z_n)\in\RR^N\times \Xx^N$ with $\Z_n\to \Z_0$ such that $m_{a_n,t_n \Z_n}$ is support stable: i.e. For each $n$, there exists a neighbourhood $V_n\subset \RR\times \Hh$ of $0$  a continuous path $g_n: (\lambda,w)\in V_n \mapsto (a,\Z)\in\RR^N\times \Xx^N$ such that $m_{a,\Z}$ solves $\Pp_\la(y_n + w)$ with $y_n =\Phi m_{a_n,t_n\Z_n}$.
Then, $\norm{\eta_{W,\Z_0}}_{L^\infty} = 1$.
\end{thm}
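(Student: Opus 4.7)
The goal is to show $\|\eta_{W,\Z_0}\|_{L^\infty}=1$. The lower bound $\ge 1$ is automatic: the constant polynomial $P_0=1$ lies in every $\Ss_{\Z_0}$, and so \eqref{eq-defn-etaW-multidim} forces $\eta_{W,\Z_0}(0)=1$. All the work is to prove $\|\eta_{W,\Z_0}\|_\infty\le 1$, which I plan to do by extracting a dual certificate from the support-stable path $g_n$ for each $n$ and passing to the limit via an extension of Theorem~\ref{thm-conv-etaw}.

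Fix $n$ and follow the noiseless section $\lambda\mapsto g_n(\lambda,0)$. For $\lambda>0$ the primal solution $m_\lambda=\sum_i a_i(\lambda)\delta_{z_i(\lambda)}$ of $\Pp_\lambda(y_n)$ is accompanied by the dual variable $p_\lambda\eqdef(y_n-\Phi m_\lambda)/\lambda\in\Hh$ whose image $\eta_\lambda\eqdef\Phi^* p_\lambda$ lies in the subdifferential of $\abs{\cdot}(\Xx)$ at $m_\lambda$. Hence $\|\eta_\lambda\|_\infty\le 1$, $\eta_\lambda(z_i(\lambda))=1$, and, because each $z_i(\lambda)$ is an interior maximum of $\eta_\lambda$, $\nabla\eta_\lambda(z_i(\lambda))=0$. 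Letting $\lambda\to 0^+$ along the continuous path and using the convex-analytic convergence of the dual variables to the minimum-$\Hh$-norm certificate (cf.~\cite{duval2015exact}), we get $p_\lambda\to p_{0,t_n\Z_n}$ in $\Hh$, with $\eta_{0,t_n\Z_n}=\Phi^* p_{0,t_n\Z_n}$ satisfying $\|\eta_{0,t_n\Z_n}\|_\infty\le 1$, $\eta_{0,t_n\Z_n}(t_nz_{n,i})=1$, $\nabla\eta_{0,t_n\Z_n}(t_nz_{n,i})=0$.

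Next I upgrade this to $\|\eta_{V,t_n\Z_n}\|_\infty\le 1$. Since $p_{0,t_n\Z_n}$ lies in the feasibility set of \eqref{eq-defn-etaV}, the comparison $\|p_{V,t_n\Z_n}\|_\Hh\le\|p_{0,t_n\Z_n}\|_\Hh$ is free; the reverse inequality will follow once one argues by contradiction that if $\eta_{V,t_n\Z_n}(x^*)>1$ at some $x^*$ then a small perturbation of $y_n$ of the form $w=\eps\Phi(\delta_{x^*}-\delta m)$ (with $\delta m$ a signed compensating perturbation supported on $t_n\Z_n$) produces, for some arbitrarily small $(\lambda,w)$, a solution of $\Pp_\lambda(y_n+w)$ carrying an extra small spike near $x^*$, in direct contradiction with the single-valued, $N$-spike path $g_n$. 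Granting $p_{V,t_n\Z_n}=p_{0,t_n\Z_n}$ eventually, it remains to show $\eta_{V,t_n\Z_n}\to\eta_{W,\Z_0}$ uniformly. Theorem~\ref{thm-conv-etaw} is stated for fixed $\Z$, but its proof via the rescaling $\diag(1_N,t_n^{-1}1_{2N})$ in \eqref{Gammaz_Vz} followed by the Gaussian elimination with partial pivoting of Procedure~\ref{thm:deboor} extends to the joint limit $t_n\to 0$, $\Z_n\to\Z_0$ provided the de Boor basis of $\Ss_{\Z_n}$ is locally constant at $\Z_0$---which is exactly what the dimension hypothesis $\dim\{h(\partial)\varphi(0):h\in\Ss_{\Z_0}\}=3N$ guarantees. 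The $\Cder{2}$ regularity of $\varphi$ then turns $\Hh$-convergence of $p_{V,t_n\Z_n}\to p_{W,\Z_0}$ into uniform convergence of $\eta_{V,t_n\Z_n}\to\eta_{W,\Z_0}$, and passing to the limit in $\|\eta_{V,t_n\Z_n}\|_\infty\le 1$ yields $\|\eta_{W,\Z_0}\|_\infty\le 1$.

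The principal technical obstacle is the perturbation argument for $\|\eta_{V,t_n\Z_n}\|_\infty\le 1$: one must exhibit, for any $x^*$ with $\eta_V(x^*)>1$ and any $\de>0$, noise $(\lambda,w)$ of size $<\de$ for which $\Pp_\lambda(y_n+w)$ has a solution whose support has cardinality different from $N$ near $t_n\Z_n\cup\{x^*\}$. The construction is analogous to the 1-D argument in~\cite{2017-denoyelle-jafa}, but in multiple dimensions one must carefully balance a signed perturbation of the existing spikes to preserve the saturation $\eta(t_nz_{n,i})=1$ while creating a new saturation at $x^*$, and one must use the positivity of the $a_n$'s to rule out sign cancellations. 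A secondary and more routine obstacle is the joint-limit extension of Theorem~\ref{thm-conv-etaw}: this reduces to continuity of the index set $(\beta_j)$ in Procedure~\ref{thm:deboor} under small perturbations of $\Z_n\to\Z_0$, which is again controlled by the dimension hypothesis.
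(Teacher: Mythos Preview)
Your overall architecture matches the paper's: obtain $\|\eta_{V,t_n\Z_n}\|_\infty\le 1$ for each $n$, then pass to the limit $\eta_{V,t_n\Z_n}\to\eta_{W,\Z_0}$. The difference lies in how each of these two steps is executed, and in the first step your proposal has a genuine gap.

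\textbf{On $\|\eta_{V,t_n\Z_n}\|_\infty\le 1$.} You try to manufacture, for a hypothetical $x^*$ with $\eta_V(x^*)>1$, a noise $w=\eps\Phi(\delta_{x^*}-\delta m)$ that forces an extra spike in the BLASSO solution. This construction is delicate (you must simultaneously guarantee that the perturbed dual certificate saturates at $x^*$ \emph{and} that no $N$-spike measure can match it), and you yourself flag it as the principal obstacle without resolving it. The paper bypasses this entirely. It observes that the optimality conditions force $f_n(g_n(\lambda,w),(\lambda,w))=0$ where
\[
f_n(u,v)=\Gamma_{t_n\Z}^*\bigl(\Phi_{t_n\Z}a-\Phi_{t_n\Z_n}a_n-w\bigr)-\lambda\tbinom{1_N}{0_{2N}},
\]
and since $\partial_u f_n$ at $(a_n,\Z_n,0)$ is invertible (exactly the full-rank hypothesis on $\Gamma_{t_n\Z_n}$), the Implicit Function Theorem produces a $\Cder{1}$ solution branch $g_*$ that must agree with the given continuous branch $g_n$ on a small neighbourhood. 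Having upgraded $g_n$ to $\Cder{1}$, one simply invokes \cite[Proposition~8]{duval2015exact}, which states that support stability along a $\Cder{1}$ path implies $\|\eta_{V,t_n\Z_n}\|_\infty\le 1$. No perturbation construction is needed.

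\textbf{On the joint limit $\eta_{V,t_n\Z_n}\to\eta_{W,\Z_0}$.} You propose to argue that the de Boor pivot indices $(\beta_j)$ are locally constant near $\Z_0$; this is plausible but would require justification. The paper instead uses a reparametrisation trick (Corollary~\ref{cor:transinv}): for each $n$ take a smooth bijection $T_n$ with $T_n\Z_0=\Z_n$ and $\|T_n-\Id\|\to 0$, and rescale to $\tilde T_n=t_n T_n\circ(t_n^{-1}\Id)$ so that $\tilde T_n(t_n\Z_0)=t_n\Z_n$. Then $\eta_{V,t_n\Z_n}=\eta_{V,t_n\Z_0}+\Oo(\|\Id-T_n\|)$, reducing the joint limit to the fixed-$\Z_0$ case already covered by Theorem~\ref{thm-conv-etaw}. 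This avoids any discussion of the stability of the Gaussian elimination.
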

\begin{proof}
First note that since $\enscond{h(\partial)\varphi(0)}{h\in\Ss_{\Z_0}}$ is of dimension $3N$, we have that $\Gamma_{t_n \Z_n}$ is full rank for all $n$ sufficiently large.

We first show that the path $g_n$ coincides with a $\Cder{1}$ function in a neighbourhood of $0$: 
For each $n$, define for $u=(a,\Z)$ and $v=(\la,w)$,
$$
f_{n}(u,v) \eqdef \Gamma_{t_n\Z}^*(\Phi_{t_n\Z} a-\Phi_{t_n \Z_n} a_n - w) -\la \binom{1_N}{0_{2N}}.
$$
For all $(\la,w)\in V_n$, optimality of $m_{a,\Z}$ for $(a,\Z) = g_{n}(\la,w)$ implies that $f_n(g(\la,w),(\la,w)) = 0$.
Since $f_n$ is $\Cder{1}$, $f_n((a_n,\Z_n),0) = 0$, $\partial_u f_n((a_n,\Z_n),0) $ is invertible, we can apply the implicit function theorem to deduce that there exists $g_*\in \Cder{1}$, a neighbourhood $V\subset \RR\times \Hh$ of $0$ and a neighbourhood $U\subset \RR^N\times \Xx^N$ of $(a_n,\Z_n)$ such that $g_*:V\to U$ and $g_*(\la,w) = (a,\Z)$ if and only if $f_n((a,\Z),(\la,w)) = 0$. Now, by continuity of $g_n$, there exists $\tilde V\subset V_n\cap V$ such that $g_{n}(\tilde V)\subset U$. Moreover, since $f_n(g(\la,w),(\la,w)) = 0$ for all $(\la,w)\in \tilde V$, we have that $g_n(\la,w) = g_*(\la,w)$ for all $(\la,w)\in\tilde V$. Therefore, $m_{a_n,t_n \Z_n}$ is support stable with a $\Cder{1}$ function. We may now apply \cite[Proposition 8]{duval2015exact} to conclude that $\norm{\eta_{V,t_n\Z_n}}_{L^\infty}\leq 1$. It remains to show that
\begin{equation}\label{eq:remains}
\eta_{V,t_n\Z_n} = \Phi^* \Gamma_{t_n \Z_n}^{*,\dagger}\binom{1_N}{0_{2N}} \to \eta_{W,\Z_0}
\end{equation}
 as $n\to \infty$.
 To show this, note that if $\lim_{n\to }\Z_n = \Z_0$, then for $n$ sufficiently large, there exist smooth bijections $T_n: \Xx \to \Xx$ such that $T_n \Z_0 = \Z_n$ and $\lim_{n\to \infty}\norm{T_n - \Id} = 0$.  Let $\tilde T_n = t_n T_n \circ (t_n^{-1}\Id)$. Then, $\tilde T_n (t_n \Z_0) = t_n \Z_n$. So, by Corollary \ref{cor:transinv},
$$\eta_{V,t_n\Z_n}= \eta_{V,t_n\Z_0} + \Oo(\norm{\Id-\tilde T_n}) = \eta_{V,t_n\Z_0} + \Oo(\norm{\Id- T_n}),
$$
thus yielding \eqref{eq:remains} by letting $n\to \infty$.

\end{proof}

\subsection{Special cases}

\subsubsection{Explicit formula of $\eta_{W,\Z}$ for Gaussian convolution}
\label{sec-gaussian-closedform}

We consider the Gaussian convolution measurement operator $\phi(x) = \psi(x-\cdot) \in \Hh = L^2(\RR^d)$ on $\Xx=\RR^d$ where
\eq{
	\psi(x_1,x_2) \eqdef (y_1,y_2)\mapsto e^{-(\abs{x_1-y_1}^2+ \abs{x_2-y_2}^2)}
} 
i.e. $\Phi m =  m\star \psi$ is the convolution against kernel $\psi$. 

\begin{prop}\label{prop:gaussian}
Suppose that  the de Boor basis $\Bb$ for $\Ss_{\Z}$ is of the form
$$
	\{P_\al\}_{\al\in J} \eqdef \{X\mapsto X^\al; \; \abs{\al}\leq L\}\cup \{P_\al\}_{j\in J_{L+1}}
$$ 
	where $P_\al$, $j\in J_{L+1}$ are homogeneous polynomials of degree $L+1$. Define the inner product $\dotp{P}{Q}_B \eqdef (P(\partial)[Q])(0)$. Then,
$$
	\eta_{W,\Z} = e^{-(x_1^2+x_2^2)/2}\left(  \sum_{\al \in J} 
	\frac{ \dotp{P_\al}{\tilde \psi}_B }{ \dotp{P_\al}{P_\al}_B  } P_\al(x) 
	\right)
	\qwhereq
	\tilde \psi(x) \eqdef \exp(\norm{x}^2/2).
$$
In particular, if $\Bb = \enscond{X\mapsto X^\al}{\abs{\al}\leq L}$, then
$$
\eta_{W,\Z}(x) = \exp(-\norm{x}^2/2) \sum_{0\leq 2\al_1+2\al_2\leq L} \frac{1}{\al!} x_1^{2\al} x_2^{2\al}.
$$
In the case where $\Z$ consists of $N$ points, all aligned along the first axis,
$$
\eta_{W,\Z}(x) = \exp(-\norm{x}^2/2) \sum_{0\leq j\leq 2N-1} \frac{x_1^{2j}}{j!}.
$$

\end{prop}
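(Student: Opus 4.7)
The plan is to exploit a factorization of the Gaussian covariance that reduces the problem to an orthogonal projection with respect to $\dotp{\cdot}{\cdot}_B$. A direct Gaussian integration gives $\Corr(x,x') = \dotp{\varphi(x)}{\varphi(x')}_{\Hh} = C\,g(x-x')$ with $g(u) = e^{-\norm{u}^2/2}$ and $C = (\pi/2)^{d/2}$, and completing the square yields the key identity
\[
g(x-x') = g(x)\,g(x')\,e^{\dotp{x}{x'}}.
\]
Hence $\Corr(x,x') = C\,g(x)\,g(x')\,K_B(x,x')$ with $K_B(x,x') = e^{\dotp{x}{x'}}$ the reproducing kernel for $\dotp{\cdot}{\cdot}_B$ (i.e.\ $\dotp{P}{e^{\dotp{\cdot}{y}}}_B = P(y)$, consistent with $\dotp{X^\alpha}{X^\beta}_B = \alpha!\,\delta_{\alpha,\beta}$).

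Using Remark~\ref{rem-comp-etaW-multid} together with the Hermite identity $\partial^\alpha g(u) = (-1)^{\abs{\alpha}}H_\alpha(u)g(u)$ (where $H_\alpha$ denotes the probabilists' Hermite polynomials), one finds $\Corr_{r,0}(0,x) = C\,g(x)\,P_r^H(x)$, where $P^H := e^{-\Delta/2}P$ is the polynomial obtained from $P$ by the substitution $X^\alpha \mapsto H_\alpha$ (with $\Delta = \sum_j \partial_j^2$). Because $p_{W,\Z}$ lies in the span of the constraint representers $(P_r(\partial)\varphi)(0)$, this automatically forces $\eta_{W,\Z}(x) = g(x)\,\tilde\eta(x)$ with $\tilde\eta \in \Span\enscond{P_r^H}{r=0,\ldots,Q-1}$. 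The interpolation constraints on $\eta_{W,\Z}$ transfer to $\tilde\eta$ through the identity
\[
(P(\partial)(g h))(0) = \dotp{P^H}{h}_B,
\]
valid for every polynomial $h$; I would derive it by first computing $(P(\partial_x)[g(x)\,e^{\dotp{x}{y}}])|_{x=0} = P^H(y)$ (via $g(x)e^{\dotp{x}{y}} = g(x-y)\,\tilde\psi(y)$), then writing $g(x)h(x) = h(\partial_y)[g(x)\,e^{\dotp{x}{y}}]|_{y=0}$ and swapping the two evaluations.

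With this, the constraints become $\dotp{P_r^H}{\tilde\eta}_B = \delta_{r,0}$, and two structural facts close the argument. First, de Boor's least interpolant space $\Ss_\Z$ is $D$-invariant (a standard property, see~\cite{de1992least}), so $P_r^H = e^{-\Delta/2}P_r$ remains in $\Ss_\Z$ (since $e^{-\Delta/2}$ reduces to a finite sum in $\Delta$ on polynomials), and by invertibility of $e^{-\Delta/2}$ on polynomials $\Span\{P_r^H\} = \Ss_\Z$. Second, the classical Hermite identity $e^{\Delta/2}H_\alpha = X^\alpha$, equivalently $\tilde\psi(\partial)H_\alpha = X^\alpha$, yields $\dotp{P^H}{\tilde\psi}_B = (\tilde\psi(\partial)P^H)(0) = P(0)$. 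Since the de Boor basis satisfies $P_0 = 1$ and $P_r(0) = 0$ for $r\geq 1$ (each $P_r$ is homogeneous of positive degree), we deduce $\dotp{P_r^H}{\tilde\psi}_B = \delta_{r,0}$. The candidate
\[
\tilde\eta^* := \sum_{\alpha\in J}\frac{\dotp{P_\alpha}{\tilde\psi}_B}{\dotp{P_\alpha}{P_\alpha}_B}\,P_\alpha,
\]
which is the $B$-orthogonal projection of $\tilde\psi$ onto $\Ss_\Z$ (using orthogonality of the de Boor basis from Remark~\ref{rem:deboor_basis}), lies in $\Ss_\Z$ and satisfies $\dotp{P_r^H}{\tilde\eta^*}_B = \dotp{P_r^H}{\tilde\psi}_B = \delta_{r,0}$ by the projection property. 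Uniqueness in $\Ss_\Z$ of the solution to these $Q$ constraints forces $\tilde\eta = \tilde\eta^*$, giving the main formula. The specializations follow by direct computation: parity of $\tilde\psi$ restricts $\dotp{X^\alpha}{\tilde\psi}_B$ to even multi-indices $\alpha = 2\beta$, and for spikes aligned along the first axis Example~\ref{exp:further-etaW} further restricts the sum to pure powers of $X_1$.

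The step I expect to be the main obstacle is the identity $(P(\partial)(gh))(0) = \dotp{P^H}{h}_B$ together with $P^H \in \Ss_\Z$: the former requires careful use of the exponential generating function $e^{\dotp{x}{y}}$, and the latter relies on the $D$-invariance of the least interpolant space. A backup that avoids $D$-invariance altogether is to verify directly that $g\cdot\tilde\eta^*$ lies in $\Span\{\Corr_{r,0}(0,\cdot)\}$ and satisfies the $Q$ interpolation conditions, then invoke uniqueness of the minimum-norm element.
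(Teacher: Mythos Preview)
Your argument is correct and in fact more complete than the paper's own proof. Both proofs share the same skeleton: write $\eta_{W,\Z}=g\cdot F$ with $g(x)=e^{-\|x\|^2/2}$ and $F$ a polynomial in $\Ss_\Z$, then identify the coefficients of $F$ in the de Boor basis using the $B$-orthogonality of that basis. The paper obtains $F\in\Ss_\Z$ by the Hermite identity $P_\alpha(\partial)g=(-1)^{|\alpha|}g\cdot(P_\alpha+\text{lower order})$ together with the specific hypothesis that all monomials of degree $\leq L$ lie in the basis (so lower-order terms are absorbed), and then asserts $P_\alpha(\partial)F(0)=P_\alpha(\partial)\tilde\psi(0)$ without detailing how the constraints on $\eta_{W,\Z}$ transfer to $F$. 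Your route instead factorizes the covariance as $\Corr(x,x')=Cg(x)g(x')e^{\langle x,x'\rangle}$, recognizes $e^{\langle\cdot,\cdot\rangle}$ as the reproducing kernel for $\langle\cdot,\cdot\rangle_B$, and encodes the passage from $\eta_{W,\Z}$ to $F$ via the operator $P\mapsto P^H=e^{-\Delta/2}P$ and the clean identity $(P(\partial)(gh))(0)=\langle P^H,h\rangle_B$. This makes the transfer of constraints explicit and yields the interpretation of $F$ as the $B$-orthogonal projection of $\tilde\psi$ onto $\Ss_\Z$, which is conceptually illuminating. The price is that you invoke $D$-invariance of the least interpolant space (a standard de Boor--Ron property) to get $P_r^H\in\Ss_\Z$, whereas the paper sidesteps this by leaning on the particular monomial structure assumed in the hypothesis; your approach would therefore extend to more general de Boor bases with minimal change.
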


\begin{proof}
 First note that $\eta_{W,\Z}$ is of the unique function of the form
$$
\eta_{W,\Z}(x) = \sum_{\abs{\al}\leq L} a_\al \partial^\al[\psi\star\psi](x) + \sum_{\al\in J_{L+1}} a_\al (P_\al(\partial)[\psi\star\psi])(x),
$$
with $\eta_{W,\Z}(0)=1$, and $P_\alpha(\partial)\eta_{W,\Z}(0) = 0$ for $\al\in J \setminus \{(0,0)\}$.
Note that $$[\psi\star\psi] (x_1,x_2) = \frac{\pi}{2} \exp(\frac{-(\abs{x_1-y_1}^2+ \abs{x_2-y_2}^2)}{2}).$$
Moreover, $$\frac{\mathrm{d}^n}{\mathrm{d}x^n} \exp(-x^2/2) = (-1)^N \exp(-x^2/2) H_n(x)$$ where $H_n$ is a monic polynomial of degree $n$ (called the Hermite polynomial of degree $n$).

Since $P_\al$ is a homogeneous polynomial of degree $\abs{\al}$, it follows that
$$
(P_\al(\partial)[\psi\star\psi])(x) = (-1)^{\abs{\al}} \exp(-\norm{x}^2/2)(P_\al(x) + f_\al(x))
$$
where $f_\al$ is a polynomial of degree at most $\abs{\al}-1$.
Therefore,
$$
F(x) = \exp(\norm{x}^2/2)\cdot \eta_{W,\Z}(x) = \sum_{\abs{\al}\leq L} a_\al x^\al + \sum_{\abs{\al}=L+1} a_{\al}P_\al(x).
$$
is a polynomial of degree $L+1$ and
and it remains to determine the coefficients $a$.
Since the de Boor basis is orthogonal w.r.t. the product $\dotp{\cdot}{\cdot}_B$ (see Remark \ref{rem:deboor_basis}), we have that
$$
P_\al(\partial)F(0) = a_\al P_\al(\partial)P_\al(0) = [P_\al(\partial)\tilde G](0).
$$
Therefore,
$$
	\eta_{W,\Z} = e^{-(x_1^2+x_2^2)}\left(  \sum_{\al \in J} 
		\frac{ \dotp{P_\al}{\tilde \psi}_B }{ \dotp{P_\al}{P_\al}_B  }
	 P_\al(x) \right).
$$

The case where $\Z$ consists of aligned points can be dealt with in a similar manner.

\end{proof}

\subsubsection{Convolution Operators and Vanishing Odd Derivatives}
\label{sec-convolution-vanish}

The following proposition shows that convolution operators enjoy the property that the odd derivatives of $\eta_{W,\Z}$ vanish. This typically leads to better behaved (e.g. non-degenerate) certificates, as illustrated in Section~\ref{sec-numerics}.
More generally, this proposition shows that, if  the correlation kernel of $\Phi$ satisfies $C(x,x') = C(-x,-x')$, then the vanishing of consecutive derivatives up to some $N\in\NN$ will imply the vanishing of all odd derivatives.

\begin{prop}\label{lem:odd_vanish}
Let $C:\RR^d\times \RR^d \to \RR$ be such that $C(x,x') = C(-x,-x')$. Let $C_{\al,\beta} \eqdef \partial^{\al_1}_{x_1}\cdots \partial^{\al_d}_{x_d} \partial_{x'_1}^{\beta_1} \cdots \partial_{x'_d}^{\beta_d}  C$.
For $N\in \NN$ we define $\eta(x) \eqdef \sum_{\abs{\al}\leq N} b_\al C_{\al,0}(0,x)$, where 
\eq{
	b \eqdef R^{-1}(1,0,\cdots,0)^T, \quad
	R \eqdef (C_{\al,\beta}(0,0))_{\substack{\al,\beta\in\NN_0^2, \:
	\abs{\al},\abs{\beta}\leq N}} \in \RR^{K\times K}
	\qandq
	K \eqdef \binom{N+d}{N}.
}
Then, $\nabla^j \eta(0) = 0$ for all odd integers $j\in \NN$.
\end{prop}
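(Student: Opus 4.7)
The plan is to exploit the symmetry $C(x,x') = C(-x,-x')$ to establish a parity structure on $C_{\alpha,\beta}(0,0)$, use it to show that the coefficient vector $b$ is supported on even-order multi-indices only, and then conclude by a parity matching argument.

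First, I would differentiate the identity $C(x,x') = C(-x,-x')$ a total of $|\alpha|$ times with respect to the first argument and $|\beta|$ times with respect to the second, applying the chain rule. Each derivative in $x$ or $x'$ picks up a factor of $-1$ on the right-hand side, so one obtains
\[
C_{\alpha,\beta}(x,x') = (-1)^{|\alpha|+|\beta|} C_{\alpha,\beta}(-x,-x').
\]
Evaluating at $(0,0)$ yields $C_{\alpha,\beta}(0,0) = 0$ whenever $|\alpha|+|\beta|$ is odd. Consequently the $K\times K$ matrix $R$ is block-diagonal with respect to the partition of multi-indices by parity of total degree: $R_{\alpha,\beta} = 0$ unless $|\alpha|$ and $|\beta|$ have the same parity.

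Next, I would observe that the right-hand side $(1,0,\ldots,0)^T$ is supported on the single multi-index $\alpha = 0$, which is even. Since $R$ decouples into an "even" and an "odd" block, the linear system $Rb = e_1$ forces the odd-degree components of $b$ to solve a homogeneous system inside the (invertible) odd block, hence to vanish. Therefore $b_\alpha = 0$ whenever $|\alpha|$ is odd, and
\[
\eta(x) = \sum_{\substack{|\alpha|\leq N \\ |\alpha|\text{ even}}} b_\alpha\, C_{\alpha,0}(0,x).
\]

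Finally, for any multi-index $\gamma$ with $|\gamma|$ odd, applying $\partial^\gamma$ to $\eta$ at $0$ and passing the derivatives under the sum gives
\[
\partial^\gamma \eta(0) = \sum_{\substack{|\alpha|\leq N \\ |\alpha|\text{ even}}} b_\alpha\, C_{\alpha,\gamma}(0,0).
\]
For each term in this sum $|\alpha|$ is even while $|\gamma|$ is odd, so $|\alpha|+|\gamma|$ is odd, and the first step gives $C_{\alpha,\gamma}(0,0) = 0$. Hence every term vanishes and $\partial^\gamma\eta(0)=0$. Since this holds for all $\gamma$ with $|\gamma|=j$ odd, $\nabla^j\eta(0)=0$, which is the claim. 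The argument is essentially symbolic, with no real analytic obstacle; the only subtlety worth stating explicitly is the invertibility of the odd block of $R$, which follows from the invertibility of $R$ itself (needed for $b = R^{-1} e_1$ to be defined) together with its block-diagonal structure.
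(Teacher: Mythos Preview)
Your proof is correct and follows essentially the same route as the paper: establish $C_{\alpha,\beta}(0,0)=0$ for $|\alpha|+|\beta|$ odd, deduce the block-diagonal (even/odd) structure of $R$, conclude that $b$ is supported on even multi-indices, and finish with a parity argument. The only cosmetic difference is the last step: the paper observes that each $C_{\alpha,0}(0,\cdot)$ with $|\alpha|$ even is an even function of $x$ (from $C_{\alpha,0}(0,x)=(-1)^{|\alpha|}C_{\alpha,0}(0,-x)$), so $\eta$ itself is even and all odd derivatives vanish, whereas you evaluate $\partial^\gamma\eta(0)$ directly via $C_{\alpha,\gamma}(0,0)=0$; these are the same argument viewed two ways.
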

\begin{proof}
Since $C(x,x') = C(-x,-x')$, $C_{\al,\beta}(0,0) = 0$ whenever $\abs{\al+\beta}$ is odd. Therefore, we may rearrange the row and columns of $R$ so that
$$
R = \begin{pmatrix}
 R_1 & 0\\
 0 & R_2 
\end{pmatrix}, \quad R_1 = \left(C_{\al,\beta}(0,0)\right)_{\substack{\al,\beta\in\NN_0^2\\
\abs{\al},\abs{\beta}\text{ both even}}}, \quad  R_2 = (C_{\al,\beta}(0,0))_{\substack{\al,\beta\in\NN_0^2\\
\abs{\al},\abs{\beta}\text{ both odd}}}.
$$
So,
$$
R^{-1}(1,0,\cdots,0)^T = \begin{pmatrix}
 R_1^{-1} & 0\\
 0 & R_2^{-1}
\end{pmatrix}(1,0,\cdots,0)^T = R^{-1}_1 (1,0,\cdots,0)^T.
$$
Therefore,
$$
\eta = \sum_{\substack{\abs{\al}\leq N\\ \abs{\al} \text{ even}}} b_\al C_{\al,0}(0,x)
$$
is an even function, and therefore, all odd derivatives of $\eta$ must vanish.
\end{proof}

\begin{rem}
As a consequence of this lemma, 
consider
$$
	p_{*} \eqdef \argmin \enscond{\norm{p}_{L^2}}{ (\Phi^* p)(0) = 1, \quad \nabla^k (\Phi^* p)(0) = 0, \quad k=1,\ldots, 2k, \quad \partial^\al(\Phi^* p)(0) = 0, \al\in J },
$$
where $J$ consists of multi-indices such that for $j\in J$, $\abs{j} = 2k+1$.
For $N\in\NN$, let
$$
	p_{N} \eqdef \argmin \enscond{\norm{p}_{L^2}}{ \Phi^* p(0) = 1, \quad \nabla^k (\Phi^* p)(0) = 0, \quad k=1,\ldots, N }.
$$ and consider $p_{2k}$ and $p_{2k+1}$ for some $k\in \NN$. 
Then, $\norm{p_{2k}} \leq \norm{p_*} \leq \norm{p_{2k+1}}$. However, by the above lemma, $p_{2k}=p_{2k+1}$.  Therefore, $p_* = p_{2k+1}$ and in particular, $\nabla^{2n+1} (\Phi^* p_*)(0) = 0$ for all $n\in \NN$.
\end{rem}


\section{Pair of Spikes}
\label{sec-two-spikes}

In this section, we consider the problem of recovering a superposition of two spikes at positions $t\Z_0$, $m_0 = \sum_{j=1}^2 a_{0,j} \delta_{tz_{0,j}}$, via  BLASSO minimization \eqref{eq-blasso} with $y = \Phi m_0+w$ for small noise $w\in\Hh$. We will show that for small $t$, provided that the limiting certificate $\eta_{W,\Z_0}$ is non-degenerate (see Definition~\ref{def:eta_W-nondegen}), the solution of \eqref{eq-blasso}  is \textit{support stable} with respect to $m_0$. By support stable, we mean that \textit{the solution is unique, has exactly $2$ spikes and that the positions and  amplitude of the recovered measure converge to $a_0$ and $\Z_0$ whenever  $(\lambda,w)$ converge  to 0 sufficiently fast.} The main result of this section will not only establish support stability, but also give precise bounds on how fast $(\lambda,w)$ should converge to 0.

In the 1-D case, the limiting certificate
is said to be non-degenerate if $\eta_W(x)<1$ for all $x\neq 0$, and its first derivative which has not been imposed to vanish at zero is negative at zero. In the case where $\eta_W$ is defined on $N$ spikes, this is the derivative of order $2N$. In 2-D, the behaviour of $\eta_{W,\Z}$ is in general non-isotropic, and in general, full derivatives are not imposed to vanish completely. When $\Z=(z_1,z_2)\in \Xx^2$, recalling that $\eta_{W,\Z}(0)=1$,
$$
\partial^j_{d_{\Z}} \eta_{W,\Z}(0) = 0, \; j=1,2,3,\quad \partial_{d_{\Z}^\perp}\eta_{W,\Z}(0)=0,\quad  \partial_{d_{\Z}} \partial_{d_{\Z}^\perp}\eta_{W,\Z}(0)=0,
$$
where $d_{\Z} = z_1-z_2$,
the analogous notion of non-degeneracy for $\eta_{W,\Z_0}$ is as follows.
\begin{defn}\label{def:eta_W-nondegen}
Let $\Z \eqdef \{z_1,z_2\}\in \Xx^2$ and let $d_{\Z} = z_2-z_1$, we say that $\eta_{W,\Z}$ is non-degenerate if   $\eta_{W,\Z}(x)<1$ for all $x\neq 0$ and
\begin{equation*}
\begin{pmatrix}
\partial^2_{d_{\Z}^\perp} \eta_{W,\Z}(0) &  \frac{1}{2}\partial_{d_{\Z}^\perp} \partial^{2}_{d_{\Z}}  \eta_{W,\Z}(0)\\
\frac{1}{2}\partial_{d_{\Z}^\perp} \partial^{2}_{d_{\Z}}  \eta_{W,\Z_0}(0) & \frac{1}{12}
\partial^{4}_{d_{\Z}}  \eta_{W,\Z}(0)
\end{pmatrix} \prec 0.
\end{equation*}
\end{defn}

The main result of this section is as follows.

\begin{thm}\label{thm-twospikes}
Let $\Z_0\in \Xx^2$.
Suppose that $\Psi_{\Z_0}$ is full rank and that $\eta_{W,\Z_0}$ is non-degenerate. Then, there exists constants $t_0$, $c_1$, $c_2$, $M$, such that for all $t\in (0,t_0)$, all $(\lambda,w)\in B(0, c_1 t^4)$ and $\norm{w/\lambda}\leq c_2$,
\begin{itemize}
\item $P_\la(y_t+w)$ has a unique solution.
\item the solution has exactly $N$ spikes and is of the form $m_{a,t\Z}$ where $(a,\Z) = g_t^*(\lambda,w)$, a continuously differentiable function defined on $B(0,c_1t^4)$.
\item  The following inequality holds:
\begin{equation}\label{eq:error-main}
\abs{(a,\Z)-(a_0,\Z_0)}_\infty \leq M \left( \frac{\abs{\la} + \norm{w}}{t^3} \right).
\end{equation}
\end{itemize}
\end{thm}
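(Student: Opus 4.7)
The strategy is to apply the implicit function theorem to the first-order optimality system of~\eqref{eq-blasso}, using a rescaling dictated by the de~Boor basis of $\Ss_{\Z_0}$ to track the dependence on $t$.

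For a positive two-spike candidate $m_{a,t\Z} = \sum_{j=1}^{2} a_j\de_{tz_j}$, the first-order optimality conditions for \eqref{eq-blasso} applied to $y_t+w$ reduce to the finite nonlinear system
\eq{
	F(a,\Z;\la,w) \eqdef \Gamma_{t\Z}^{*}\bigl(\Phi_{t\Z}a - \Phi_{t\Z_0}a_0 - w\bigr) + \la\binom{1_N}{0_{2N}} = 0,
}
together with the certificate constraint $\norm{\Phi^{*}(y_t+w-\Phi m_{a,t\Z})/\la}_{L^\infty}\leq 1$. The map $F$ is smooth, vanishes at $(a_0,\Z_0;0,0)$, and its Jacobian with respect to $(a,\Z)$ at the base point equals $\Gamma_{t\Z_0}^{*}\Gamma_{t\Z_0}\,E_t$ where $E_t \eqdef \diag(1,1,a_{0,1}t,a_{0,2}t,a_{0,1}t,a_{0,2}t)$ arises from the chain rule on $(a,\Z)\mapsto \Phi_{t\Z}a$.

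To invert this Jacobian I use the decomposition $\Gamma_{t\Z_0}^{*} = \diag(1_N,t^{-1}1_{2N})\,L\,D_t\,(\Psi_{\Z_0}^{*}+O(t))$ extracted from the proof of Theorem~\ref{thm-conv-etaw}, where $D_t = \diag(t^{|\beta_i|})$ encodes the de~Boor exponents of $\Ss_{\Z_0}$ and $L$ is invertible. After conjugation by suitable diagonal rescalings, the Jacobian becomes $\Psi_{\Z_0}^{*}\Psi_{\Z_0} + O(t)$, which is invertible uniformly in small $t$ by the full-rank hypothesis on $\Psi_{\Z_0}$. A quantitative implicit function theorem then produces the $\Cder{1}$ map $g_t^{*}$ on an appropriate ball. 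For two spikes, the worst de~Boor exponent is $|\beta|=3$, associated to the direction $\partial_{d_\Z}^{3}$ along the separation axis; tracking this through the rescaling yields both the error bound~\eqref{eq:error-main} and the admissible radius $c_1 t^4$ on $(\la,w)$ once combined with the extra factor $t$ in $E_t$.

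It then remains to verify that $m_{a,t\Z}$ with $(a,\Z)=g_t^{*}(\la,w)$ is genuinely the unique solution of $\Pp_\la(y_t+w)$, which amounts to checking that the induced certificate $\eta_\la \eqdef \Phi^{*}(y_t+w-\Phi m_{a,t\Z})/\la$ satisfies the non-degeneracy condition~\eqref{eq-cond-nd}. This is a perturbation argument: $\eta_\la$ is $\Cder{2}$-close to $\eta_{V,t\Z_0}$, which by Theorem~\ref{thm-conv-etaw} is $L^\infty$-close to $\eta_{W,\Z_0}$ as $t\to 0$. The non-degeneracy of $\eta_{W,\Z_0}$ in the sense of Definition~\ref{def:eta_W-nondegen}, combined with $\eta_{W,\Z_0}(x)<1$ for $x\neq 0$ and a compactness argument away from the origin, propagates the strict inequality $\eta_\la(x)<1$ for $x\notin\{tz_1,tz_2\}$; standard convex duality (as in~\cite{duval2015exact}) then yields uniqueness.

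The main technical obstacle is the sharp scaling $t^{-3}$ in~\eqref{eq:error-main}. Although $\Psi_{\Z_0}^{*}\Psi_{\Z_0}$ is an $O(1)$ invertible matrix, the diagonal $D_t$ has condition number $t^{-3}$ and mixes with $E_t$ through the change of basis $L$, so the $O(t)$ remainders in the decomposition of $\Gamma_{t\Z_0}^{*}$ threaten to dominate in the worst-scaled directions. Avoiding this requires a specific $2\times 2$ subblock of the rescaled Gram matrix to remain uniformly invertible, and this subblock is precisely the one appearing in Definition~\ref{def:eta_W-nondegen}: in 2-D (unlike 1-D, where a single scalar $\eta_W^{(2N)}(0)<0$ suffices), non-degeneracy must be encoded by negative definiteness of the quadratic form associated to the unconstrained derivatives $\partial_{d_\Z^\perp}^{2}$, $\partial_{d_\Z^\perp}\partial_{d_\Z}^{2}$, $\partial_{d_\Z}^{4}$ of $\eta_{W,\Z_0}$ at the origin, and this is what ultimately guarantees the $O(1)$ bound on the rescaled Jacobian inverse.
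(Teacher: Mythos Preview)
Your overall architecture matches the paper's: define $F$ from the first-order optimality system, invert the rescaled Jacobian via the factorisation $\Gamma_{t\Z}=\Psi_{t\Z}H_{t\Z}$, build $g_t^*$ by a quantitative implicit function theorem, and then verify the certificate. Two substantive points, however, are either missing or misplaced.

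First, your final paragraph misidentifies where the $2\times 2$ matrix of Definition~\ref{def:eta_W-nondegen} enters. It is \emph{not} a subblock of the rescaled Gram matrix, and it plays no role in bounding the Jacobian inverse: the full-rank hypothesis on $\Psi_{\Z_0}$ alone gives $G_{t\Z}^{-1}=O(1)$ in the paper's Lemma~\ref{lem:Ginv}. The matrix condition is used exclusively in the certificate step (Proposition~\ref{prop:degen_transfer}). The difficulty there is that $\eta_{W,\Z_0}$ has an \emph{anisotropic} maximum at $0$---second order along $d_{\Z_0}^\perp$ but only fourth order along $d_{\Z_0}$---so neither $L^\infty$ nor $C^2$ proximity of $\eta_\la$ to $\eta_{W,\Z_0}$ is enough to propagate $\eta_\la<1$ near the origin; your ``compactness argument'' works only away from $0$. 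The paper handles this by a seven-case analysis on how a putative saturation point $x_n\to 0$ can approach the origin along parabolic scalings $(f(t_n),g(t_n))$, and each case produces a linear combination of $\partial_{d_{\Z_0}^\perp}^2\eta_W(0)$, $\partial_{d_{\Z_0}^\perp}\partial_{d_{\Z_0}}^2\eta_W(0)$, $\partial_{d_{\Z_0}}^4\eta_W(0)$ that would have to vanish---exactly the quadratic form whose definiteness is the non-degeneracy hypothesis.

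Second, your Jacobian analysis is only carried out at the base point $(a_0,\Z_0;0,0)$. To obtain the radius $c_1 t^4$ from a quantitative implicit function theorem you must control $\partial_u F$ over the whole candidate ball, in particular at $(a,\Z)$ with $\|\Z-\Z_0\|$ of order $t$. At such points the Jacobian acquires an extra term built from $q_{t\Z}=\la\,\Gamma_{t\Z}^{*,\dagger}\binom{1_N}{0_{2N}}+\Pi_{t\Z}w+\Pi_{t\Z}\Gamma_{t\Z_0}\binom{a_0}{0}$; after the $H_{t\Z}$-rescaling this contributes $t^{-4}\|q_{t\Z}\|$ to $G_{t\Z}$. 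The first two pieces are handled by the hypotheses $\la,\|w\|=O(t^4)$, but the third---the noiseless data projected onto the ``wrong'' range---requires the dedicated bound $\|\Pi_{t\Z}\Gamma_{t\Z_0}\binom{a_0}{0}\|\lesssim\max(t^2\|\Z-\Z_0\|^2,\,t^3\|\Z-\Z_0\|)$ of Proposition~\ref{prop:bd3}, which in turn comes from the explicit Gaussian-elimination remainders in Lemma~\ref{prop:cvgence}. The same bound is what makes $\frac{1}{\la}\Pi_{t\Z}\Gamma_{t\Z_0}\binom{a_0}{0}$ small in Proposition~\ref{prop:implicit_dual_convergence}, so that $p_{\la,t}\to p_{W,\Z_0}$; without it you cannot show the certificate is close to anything.
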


\paragraph{Reading guide.}

We begin with some preliminary bounds in Section \ref{sec-proof-asymptotic-exp}.
In Section~\ref{sec:nondegen-transfer}, we show that this stability is a direct consequence of the non-degeneracy transfer of $\eta_{W,\Z_0}$ to $\eta_{V,t\Z_0}$. Observe that support stability is then  a direct consequence of this non-degeneracy transfer,  Theorem~\ref{thm-conv-etaw} which shows that $\eta_{V,t\Z_0}$ converges to $\eta_{W,\Z_0}$ and  the main result of \cite{duval2015exact} which shows that non-degeneracy of $\eta_{V,t\Z_0}$ implies support stability. The remainder of this section is then devoted  to establishing precisely \textit{how fast} $(\lambda,\norm{w}_\Hh)$ need to converge to 0 to ensure support stability. In Section~\ref{sec-proof-asymptotic-exp}, we derive more precise bounds on the convergence of $\eta_{V,t\Z}$ to $\eta_{W,\Z}$ in the case where $\Z\in \Xx^2$. In Section~\ref{sec-proof-implicit-func-thm}, we construct a $\Cder{1}$ mapping $g:(\lambda,w)\to (a,\Z)$, and show that the associated measure $m_{a,\Z}$ is indeed a solution to \eqref{eq-blasso}. Similarly to the approach of~\cite{2017-denoyelle-jafa}, this is achieved via the Implicit Function Theorem. Section~\ref{sec-proof-thm-boundvt} is then devoted to analysis of the size of the region for which this function $g$ is defined, establishing bounds on the differential of $g$ (which eventually leads to the convergence bounds of Theorem~\ref{thm-twospikes}) and Section~\ref{sec-proof-thm-use-nondegen} proves that the measure $m_{a,\Z}$ is indeed a solution of \eqref{eq-blasso}. 

\subsection{Preliminaries}
\label{sec-proof-asymptotic-exp}

We have already seen from Theorem~\ref{thm-twospikes} that $\eta_{V,t\Z}$ converges to $\eta_{W,\Z}$ as $t\to 0$.  For the purpose of deriving precise estimates on the speed of convergence of $(\lambda,w)$ for support stability, we write explicitly in this section the relationship between  $\eta_{V,t\Z}$ and $\eta_{W,\Z}$.

\begin{lem}\label{prop:cvgence}
Let $\Z = \{(u_1,u_2), (v_1,v_2)\}$.
Then,
$$
\Gamma_{t\Z} = \Psi_{t\Z} H_{t\Z}
$$
where $\Psi_{t\Z}\eqdef \Psi_{\Z} + \Lambda_{t\Z}$,
$$
\Psi_{\Z}^* p \eqdef
\begin{pmatrix}
a_{0,0}\\
a_{0,1}\\
a_{1,0}\\
a_{0,2}-a_{2,0}(u_1-v_1)^2/(u_2-v_2)^2\\
a_{1,1} + (u_1-v_1)a_{2,0}/(u_2-v_2)\\
a_{0,3} + a_{3,0} \frac{(u_1-v_1)^3}{(u_2-v_2)^3} + 3 a_{1,2} \frac{u_1-v_1}{u_2-v_2} + 3a_{2,1} \frac{(u_1-v_1)^2}{(u_2-v_2)^2}
\end{pmatrix}, \qquad a_{j,k} = \dotp{\partial_x^j\partial_y^k \varphi(0)}{p},
$$
$$
H_{t\Z}^{*} \eqdef \begin{pmatrix}
1 & tu_2 & tu_1 & t^2 u_2^2/2 &t^2 u_1 u_2 & t^3 u_2^3/6 \\
1 & tv_2 & tv_1 & t^2 v_2^2/2 & t^2 v_1 v_2 & t^3 v_2^3/6\\
0 & 0 & 1 & 0 & t u_2 & 0\\
0 & 0 & 1 & 0 & t v_2 & 0\\
0 & 1 & 0 & tu_2 & t u_1 & t^2 u_2^2/2\\
0 & 1 & 0 & t v_2 & t v_1 & t^2 v_2^2/2
\end{pmatrix},
$$
and 
$
\Lambda_{t\Z}^*: \Hh \to \RR^6
$
satisfies the following properties:
\begin{itemize}
\item $\diag(t^{-2},t^{-1},t^{-1},t^{-1},t^{-1},t^{-1}) \Lambda_{t\Z}^* = \Oo(1)$, in particular, $\Lambda_{t\Z} = \Oo(t)$,
\item For any $\Z_0\in \Xx^2$, $\diag(t^{-2},t^{-1},t^{-1},t^{-1},t^{-1},t^{-1}) (\Lambda_{t\Z}^* - \Lambda_{t\Z}^*) = \Oo(\norm{\Z-\Z_0})$.
\end{itemize}
Furthermore, given $\Z_0\in \Xx^2$, we have that
\begin{itemize}
\item $\Psi_\Z = \Psi_{\Z_0} + \Oo(\norm{\Z-\Z_0})$
\item If  $\Psi_{\Z_0}$ is full rank, then $\Gamma_{t\Z}^{*,\dagger} \binom{1_2}{0_4} = \Psi_{\Z_0}^{*,\dagger}\delta_6 + \Oo(\norm{\Z-\Z_0})+ \Oo(t)$.
\end{itemize}
\end{lem}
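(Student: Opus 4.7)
My plan is to prove the factorisation $\Gamma_{t\Z}=\Psi_{t\Z}H_{t\Z}$ by Taylor expansion of each of the six vectors $\varphi(tz_j)$, $\partial_{x_1}\varphi(tz_j)$, $\partial_{x_2}\varphi(tz_j)$ ($j=1,2$) around the origin. For the first two rows of $\Gamma_{t\Z}^{*}p$, which come from $\varphi(tz_j)$, I would expand to order $3$ in $t$ with an $\Oo(t^4)$ Taylor remainder; for the four rows containing first derivatives I would expand to order $2$ with an $\Oo(t^3)$ remainder. Pairing with $p\in\Hh$ then rewrites each component of $\Gamma_{t\Z}^{*}p$ as an explicit polynomial in $t$ whose coefficients are the Taylor moments $a_{j,k}=\dotp{\partial_x^j\partial_y^k\varphi(0)}{p}$.

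The identification step is the main algebraic computation. Setting $r=(u_1-v_1)/(u_2-v_2)$, the six entries of $\Psi_\Z^{*}p$ are precisely the evaluations at $0$ of the de Boor differential operators attached to the basis $\{1,\,Y,\,X,\,Y^2-r^2X^2,\,XY+rX^2,\,(Y+rX)^3\}$ of $\Ss_\Z$. Matching the Taylor polynomial against $(H_{t\Z}^{*}\Psi_\Z^{*}p)_i$ row by row, every mismatched term collects through the recurring factor $u_1-u_2 r=(u_2v_1-u_1v_2)/(u_2-v_2)$, raised to a power: in rows $1$ and $2$ the leading residual is proportional to $t^2(u_1-u_2 r)^2 a_{2,0}$ and the $t^3$ terms also factor through $u_1-u_2 r$, whereas in rows $3$--$6$ the leading residual is $t(u_1-u_2 r)$ times a single degree-$2$ Taylor moment. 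I would then define $\Lambda_{t\Z}^{*}$ as the unique operator satisfying $\Gamma_{t\Z}^{*}-H_{t\Z}^{*}\Psi_\Z^{*}=H_{t\Z}^{*}\Lambda_{t\Z}^{*}$, so that the factorisation holds by construction. Tracking how $H_{t\Z}^{*,-1}$ acts on the residual (whose row sizes are $\Oo(t^2)$ for rows $1$--$2$ and $\Oo(t)$ for rows $3$--$6$) yields the claimed scaling $\diag(t^{-2},t^{-1},t^{-1},t^{-1},t^{-1},t^{-1})\Lambda_{t\Z}^{*}=\Oo(1)$.

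For the perturbation statements in $\Z$, I note that the entries of $\Psi_\Z^{*}$ and the residuals defining $\Lambda_{t\Z}^{*}$ are rational functions of the coordinates of $\Z$ with denominator a power of $u_2-v_2$. Since $\Psi_{\Z_0}$ is assumed full rank, $u_2-v_2$ does not vanish at $\Z_0$, hence $\Z\mapsto\Psi_\Z$ is smooth on a neighbourhood of $\Z_0$; the Lipschitz bounds $\Psi_\Z=\Psi_{\Z_0}+\Oo(\|\Z-\Z_0\|)$ and $\diag(t^{-2},t^{-1},\ldots,t^{-1})(\Lambda_{t\Z}^{*}-\Lambda_{t\Z_0}^{*})=\Oo(\|\Z-\Z_0\|)$ follow by differentiating the explicit formulas. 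For the final bullet, a direct inspection of the first column of $H_{t\Z}^{*}$ gives $H_{t\Z}^{*}\delta_6=\binom{1_2}{0_4}$ with $\delta_6=(1,0,0,0,0,0)^\top$, so $H_{t\Z}^{*,-1}\binom{1_2}{0_4}=\delta_6$ identically in $(t,\Z)$ and consequently $\Gamma_{t\Z}^{*,\dagger}\binom{1_2}{0_4}=\Psi_{t\Z}^{*,\dagger}\delta_6$. The local Lipschitz continuity of the Moore--Penrose pseudo-inverse around the full-rank matrix $\Psi_{\Z_0}$ then transfers the expansion $\Psi_{t\Z}=\Psi_{\Z_0}+\Oo(\|\Z-\Z_0\|)+\Oo(t)$ to its pseudo-inverse, giving the required $\Oo(\|\Z-\Z_0\|)+\Oo(t)$ bound.

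The main obstacle is the row-by-row algebraic bookkeeping of the second paragraph: one must verify that after subtracting $H_{t\Z}^{*}\Psi_\Z^{*}p$ from the Taylor polynomial of $\Gamma_{t\Z}^{*}p$, every residual factors cleanly through $(u_1-u_2r)^k$ for the appropriate $k$, and that the subsequent action of $H_{t\Z}^{*,-1}$ does not inflate the scaling. The perturbation and pseudo-inverse steps are then essentially automatic consequences, once the convenient identity $H_{t\Z}^{*}\delta_6=\binom{1_2}{0_4}$ is observed.
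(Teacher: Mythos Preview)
Your outline follows the same route as the paper: both arguments write $\Gamma_{t\Z}^*p$ through the Taylor coefficients $a_{j,k}$, factor out the invertible matrix $H_{t\Z}^*$ (this is exactly the Gaussian elimination on the Hermite interpolation matrix $V_{t\Z}$ that the paper performs), and then read off $\Psi_\Z^*$ as the leading part and $\Lambda_{t\Z}^*$ as the remainder. Your identification $H_{t\Z}^{*}\delta_6=\binom{1_2}{0_4}$ and the consequent identity $\Gamma_{t\Z}^{*,\dagger}\binom{1_2}{0_4}=\Psi_{t\Z}^{*,\dagger}\delta_6$ is also exactly how the paper handles the last bullet.

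Two points deserve tightening. First, the sentence ``Since $\Psi_{\Z_0}$ is assumed full rank, $u_2-v_2$ does not vanish at $\Z_0$'' is not a valid implication: the full-rank hypothesis concerns linear independence of the derivatives $\partial^\alpha\varphi(0)$, not the geometry of $\Z_0$. The displayed formula for $\Psi_\Z^*$ simply presupposes $u_2\neq v_2$; the paper treats the complementary case by swapping the order of elimination. Second, the scaling claim $\diag(t^{-2},t^{-1},\ldots,t^{-1})\Lambda_{t\Z}^*=\Oo(1)$ does \emph{not} follow from the row-size estimate ``rows 1--2 are $\Oo(t^2)$, rows 3--6 are $\Oo(t)$'' alone. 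A generic vector with those row sizes, pushed through $H_{t\Z}^{*,-1}=\diag(1,t^{-1},t^{-1},t^{-2},t^{-2},t^{-3})H_\Z^{*,-1}\diag(1,1,t,t,t,t)$, would produce an $\Oo(t^{-1})$ in the sixth entry. What saves you is the structural observation you already made---that $u_1-u_2r=v_1-v_2r=(u_2v_1-u_1v_2)/(u_2-v_2)$, so the leading residual is \emph{identical} in rows $1$ and $2$ (and likewise in rows $3$--$4$ and $5$--$6$); this forces the extra order of cancellation in the relevant linear combinations inside $H_\Z^{*,-1}$. You should make that cancellation explicit rather than leave it to ``tracking how $H_{t\Z}^{*,-1}$ acts''; the paper does this by direct inspection of $H_{t\Z}^{*,-1}V_{t\Z}$.
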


\begin{proof}
We first recall that for $p\in \Hh$, we can define a vector $a= (a_{j,k})_{(j,k)\in\NN_0^2}\eqdef (\dotp{\partial^\alpha \varphi(0)}{p})_{\alpha\in \NN_0^2}$ and write $$\Gamma^*_{t\Z} p= V_{t\Z} a,$$ where $V_{t\Z}$ is the Hermite interpolation matrix at $t\Z$.
Then, by performing Gaussian elimination on the matrix $V_{t\Z}$, we obtain the following decomposition:
$$
\Gamma_{t\Z}^* p = H_{t\Z}^* \left( \underbrace{ \begin{pmatrix}
a_{0,0}\\
a_{0,1}\\
a_{1,0}\\
a_{0,2}-a_{2,0}(u_1-v_1)^2/(u_2-v_2)^2\\
a_{1,1} + (u_1-v_1)a_{2,0}/(u_2-v_2)\\
a_{0,3} + a_{3,0} \frac{(u_1-v_1)^3}{(u_2-v_2)^3} + 3 a_{1,2} \frac{u_1-v_1}{u_2-v_2} + 3a_{2,1} \frac{(u_1-v_1)^2}{(u_2-v_2)^2}
\end{pmatrix}}_{\Psi_{\Z}^* p} +\underbrace{ \begin{pmatrix}
h_1\\h_2\\h_3\\h_4\\h_5\\h_6
\end{pmatrix}}_{\Lambda_{t\Z}^* p}
\right).
$$
Note that 
$$  H_{t\Z}^{*,-1} =\diag(1,1/t,1/t,1/t^2,1/t^2,1/t^3) H_{\Z}^{*,-1} \diag(1,1,t,t,t,t),$$ and $(u_2-v_2)^3 H_{\Z}^{*,-1}$ is
{\tiny
$$
 \begin{pmatrix}
-v_2^3 + 3 u_2 v_2^2 & u_2^3 -3v_2u_2^2 & v_1v_2 u_2^2 - 2u_1u_2v_2^2 + u_1 v_2^3 & u_1u_2v_2^2 -v_1 u_2^3-2v_1v_2u_2^2  & - u_2v_2^2(u_2-v_2) & - u_2^2 v_2(u_2-v_2)\\ 
-6u_2v_2 & 6u_2v_2 & u_2^2v_1 -u_1v_2^2-4u_1u_2v_2+2u_2v_1v_2
& u_1v_2^2 + u_2^2 v_1 +2u_1u_2v_2 - 4 u_2 v_1v_2
&
(v_2^2+2u_2v_2)(u_2-v_2) & (u_2^2+2v_2u_2v_2)(u_2-v_2)\\
0& 0&-v_2(u_2-v_2)^2 & u_2(u_2-v_2)^2& 0 & 0\\

6(u_2+v_2) &-6(u_2+v_2) & -2(u_1-v_1)(2u_2+v_2) & -2(u_1-v_1)(u_2+2v_2) & -2(u_2+2v_2)(u_2-v_2) & -2(2u_2+v_2)(u_2-v_2)\\

0& 0& (u_2-v_2)^2 &  -(u_2-v_2)^2 &0& 0 \\
-12 & 12 & 6(u_1-v_1) & 6(u_1-v_1) & 6(u_2-v_2) & 6(u_2-v_2)
\end{pmatrix}.
$$
}

By inspection of $H_{t\Z}^{*,-1} V_{t\Z}$, we see that
\begin{itemize}
\item $h_1 = \Oo(t^2)$ and $h_j = \Oo(t)$ for all $j=2,\ldots, 6$.
\item The terms $h_1/t^2$ and $h_j/t$ for $j\geq 2$ are uniformly bounded in $t$ for $\abs{t}\leq t_0$, and when considered as functions of $u_1,u_2,v_1,v_2$, they are continuous and differentiable everywhere except at $u_2=v_2$. So, $\diag(1/t^2,1/t,\cdots,1/t)(\Lambda^*_{t\Z}  - \Lambda^*_{t\Z_0}) =  \Oo(\norm{\Z-\Z_0})$  provided that $\Z_0 = \{(a,b),(c,d)\}$ is such that $b\neq d$. Note that the case where $b=d$ can be dealt with similarly by changing the order of Gauss elimination.
\end{itemize}

To see that $\Psi^*_{\Z} = \Psi^*_{\Z_0}  + \Oo(\norm{\Z-\Z_0})$, observe that when considering $\Psi^*_\Z p$ as a function of $\Z$, it is differentiable everywhere except at $u_2=v_2$ and  provided that $\Z_0 = \{(a,b),(c,d)\}$ is such that $b\neq d$. Again, the case where $b=d$ can be dealt with similarly by changing the order of Gaussian elimination.

For the last claim, note that $L_{t\Z} \binom{1_2}{0_4} = \delta_6$. So, $\Gamma_{t\Z}^{*}p = \binom{1_2}{0_4}$ if and only if $(\Psi_{\Z}^* + \Lambda_{t\Z}^*) p = \delta_{6}$. From
$$
\Psi_{\Z} + \Lambda_{t\Z}= \Psi_{\Z_0} + \Oo(\norm{\Z-\Z_0}) + \Oo(t),
$$
we see that $\Psi_{\Z}$ is full rank whenever $\Psi_{\Z_0}$ is full rank and provided that $\norm{\Z-\Z_0}+\abs{t}$ is sufficiently small. Therefore,
$$
(\Psi_{\Z}^* + \Lambda_{t\Z}^*)^\dagger = \Psi_{\Z_0}^{*,\dagger} + \Oo(\norm{\Z-\Z_0}) + \Oo(t).
$$
\end{proof}

%

%
%

In the case of $\Z_0 = \{(0,0), (0,1)\}$,  the de Boor basis associated with $\Z_0$ is $\Bb_{\Z_0} = \{1,y,x,y^2,xy, y^3\}$. 
Moreover, in this case, by writing  $a_{j,k} = \dotp{\partial_x^j\partial_y^k \varphi(0)}{p}$ for $p\in \Hh$, 
\begin{equation}\label{eq-2spikes-Gamma}
\Psi_{\Z_0}^* p \begin{pmatrix}
a_{0,0}\\
a_{0,1}\\
a_{1,0}\\
a_{0,2}\\
a_{1,1}\\
a_{0,3}
\end{pmatrix}
,\qquad \Lambda_{t\Z_0}^* p =
\begin{pmatrix}
0\\0\\0\\
\sum_{j\geq 4} \frac{t^{j-2}}{j!} a_{j,0} (6-2j)\\
\sum_{j\geq 2} \frac{t^{j-1}}{j!} a_{1,j}\\
\sum_{j\geq 4} \frac{t^{j-3}}{j!} a_{0,j} (6j-12)
\end{pmatrix},
\end{equation}
and
$$
H_{t\Z}^{*,-1} = \begin{pmatrix}
1 & 0 & 0 & 0 & 0 & 0\\
0 & 0 & 0 & 0 & 1 & 0\\
0 & 0 & 1 & 0 & 0 & 0 \\
-6/t^2 & 6/t^2 & 0 & 0 & -4/t & -2/t\\
0 & 0 & -1/t & 1/t & 0 & 0 \\
12/t^3 & -12/t^3 & 0 & 0 & 6/t^2 & 6/t^2\\
\end{pmatrix},\quad H_{\Z}^* = \begin{pmatrix}
1 & 0 & 0 & 0 & 0 & 0\\
1 & t & 0 & t^2/2 & 0 & t^3/6\\
0 & 0 & 1 & 0 & 0 & 0\\
0 & 0 & 1 & 0 & t & 0\\
0 & 1 & 0 & 0 & 0 & 0\\
0 & 1 & 0 & t & 0 & t^2/2
\end{pmatrix}.
$$

In the following, let $\Pi_{t\Z} \eqdef P_{(\Im \Gamma_{t\Z})^\perp} = \Id - \Gamma_{t\Z}\Gamma_{t\Z}^\dagger$ be the orthogonal projection of $(\Im \Gamma_{t\Z})^\perp$.

\begin{prop}\label{prop:bd3}
Let $\Z_0 = \{(0,0), (0,1)\}$, $a\in \RR^2$ and $\Z \in \Xx^2$. Then, there exists a constant $C$ dependent only on $\varphi$ such that
$$\norm{ \Pi_{t\Z} \Gamma_{t\Z_0} \binom{a}{0_4}}_\Hh \leq C \max\{t^2 \norm{\Z-\Z_0}^2, t^3 \norm{\Z-\Z_0} \}.$$
\end{prop}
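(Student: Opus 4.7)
My plan is a direct Taylor-expansion argument. Since $z_{0,1}=(0,0)$ and $z_{0,2}=(0,1)$, we have $\Gamma_{t\Z_0}\binom{a}{0_4} = a_1\varphi(tz_{0,1}) + a_2\varphi(tz_{0,2})$, while the image of $\Gamma_{t\Z}$ contains $\varphi(tz_j)$ together with the first partials $\partial_{x_k}\varphi(tz_j)$ for $j,k\in\{1,2\}$. The idea is to approximate each $\varphi(tz_{0,j})$ by its first-order Taylor polynomial about the nearby point $tz_j$, which automatically lies in $\Im\Gamma_{t\Z}$, and to bound the projection by the second-order Taylor remainder.

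Explicitly, by Taylor's theorem,
\[
\varphi(tz_{0,j}) = \varphi(tz_j) + t(z_{0,j}-z_j)\cdot\nabla\varphi(tz_j) + R_j, \qquad \|R_j\|_\Hh \le C\,t^2\|z_{0,j}-z_j\|^2,
\]
where $C$ depends only on the uniform $\Hh$-norm of the second partials of $\varphi$. Setting $u \eqdef \sum_j a_j\bigl[\varphi(tz_j)+t(z_{0,j}-z_j)\cdot\nabla\varphi(tz_j)\bigr] \in \Im\Gamma_{t\Z}$, the characterisation of $\Pi_{t\Z}$ as the distance to $\Im\Gamma_{t\Z}$ yields
\[
\bigl\|\Pi_{t\Z}\,\Gamma_{t\Z_0}\binom{a}{0_4}\bigr\|_\Hh \;\le\; \bigl\|\Gamma_{t\Z_0}\binom{a}{0_4}-u\bigr\|_\Hh \;\le\; C\|a\|\,t^2\|\Z-\Z_0\|^2.
\]
Since $t^2\|\Z-\Z_0\|^2 \le \max\{t^2\|\Z-\Z_0\|^2,\,t^3\|\Z-\Z_0\|\}$ always holds, absorbing the $\|a\|$-factor into the implicit constant yields the claimed bound.

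As a remark, the two separate terms in the max admit a natural interpretation via the decomposition $\Gamma_{t\Z}=\Psi_{t\Z}H_{t\Z}$ from Lemma~\ref{prop:cvgence}: the identity $\Pi_{t\Z}\Psi_{t\Z}=0$ (valid since $H_{t\Z}$ is invertible) gives $\Pi_{t\Z}\Gamma_{t\Z_0}\binom{a}{0_4} = \Pi_{t\Z}(\Psi_{t\Z_0}-\Psi_{t\Z})\bigl[H_{t\Z_0}\binom{a}{0_4}\bigr]$, and a direct calculation gives $H_{t\Z_0}\binom{a}{0_4} = (a_1+a_2,\,ta_2,\,0,\,t^2a_2/2,\,0,\,t^3a_2/6)^\top$. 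The two non-trivial contributions from columns~$4$ and~$6$ of $\Psi_{\Z_0}-\Psi_\Z$ (with $\Hh$-norms $O(\|\Z-\Z_0\|^2)$ and $O(\|\Z-\Z_0\|)$ respectively, read off from the rational terms in the explicit formula for $\Psi_\Z^*$) produce precisely the $O(t^2\|\Z-\Z_0\|^2)$ and $O(t^3\|\Z-\Z_0\|)$ terms in the max. The main technical difficulty via this route would be controlling the $\Lambda$-remainder $\Pi_{t\Z}(\Lambda_{t\Z_0}-\Lambda_{t\Z})H_{t\Z_0}\binom{a}{0_4}$, for which the naive operator-norm estimate is too loose; the direct Taylor approach above bypasses this difficulty entirely.
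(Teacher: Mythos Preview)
Your argument is correct, and in fact it yields the sharper bound $C\|a\|\,t^2\|\Z-\Z_0\|^2$ (the $t^3\|\Z-\Z_0\|$ term in the statement is then redundant). The only mild caveat is the $\|a\|$-factor: the proposition as stated claims $C$ depends only on $\varphi$, but the left-hand side is homogeneous of degree one in $a$, so some dependence on $a$ is unavoidable; in the paper's applications $a=a_0$ is a fixed vector, so this is harmless.

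Your route is genuinely different from the paper's. The paper proceeds exactly along the lines you sketch in your remark: it uses the factorisation $\Gamma_{t\Z}=\Psi_{t\Z}H_{t\Z}$ together with $\Pi_{t\Z}\Psi_{t\Z}=0$ to write
\[
\Pi_{t\Z}\Gamma_{t\Z_0}\binom{a}{0_4}
=\Pi_{t\Z}\bigl[(\Psi_{\Z_0}-\Psi_{\Z})+(\Lambda_{t\Z_0}-\Lambda_{t\Z})\bigr]H_{t\Z_0}\binom{a}{0_4},
\]
computes $H_{t\Z_0}\binom{a}{0_4}=(a_1+a_2,\,ta_2,\,0,\,t^2a_2/2,\,0,\,t^3a_2/6)^\top$, and then bounds the contributions from columns~4 and~6 of $\Psi_{\Z_0}-\Psi_{\Z}$ separately (giving the two scales in the $\max$), before doing a parallel but more laborious inspection of the first two entries of $\Lambda_{t\Z_0}-\Lambda_{t\Z}$ to handle the remainder term. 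What your direct Taylor argument buys is that it bypasses this entry-by-entry bookkeeping entirely and delivers a cleaner (and tighter) estimate; what the paper's decomposition buys is consistency with the machinery used elsewhere (Lemma~\ref{lem:Ginv}, Proposition~\ref{prop:bd2}), where the individual components $\Psi_{\Z}$, $\Lambda_{t\Z}$, $H_{t\Z}$ are needed anyway.
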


\begin{proof}
\begin{align*}
 \Pi_{t\Z} \Gamma_{t\Z_0} & =  \Pi_{t\Z} ( \Psi_{\Z_0} + \Lambda_{t\Z_0}) H_{t\Z_0}\\
 & =  \Pi_{t\Z} ( \Psi_{\Z} + \Lambda_{t\Z} + (\Psi_{\Z_0}-\Psi_{\Z}) + (\Lambda_{t\Z_0}  - \Lambda_{t\Z})) H_{t\Z_0}\\
 &=  \Pi_{t\Z} (  (\Psi_{\Z_0}-\Psi_{\Z}) + (\Lambda_{t\Z_0}  - \Lambda_{t\Z})) H_{t\Z_0}
 \end{align*}
Note that
$$
H_{t\Z_0} \binom{a}{0_4} = \begin{pmatrix}
a_1+a_2\\ ta_2\\ 0\\ t^2 a_2/2\\0 \\ t^3 a_2 /6 
\end{pmatrix}.
$$
Let $\Bb_{\Z}$ be the de Boor basis associated with Hermite interpolation at $\Z$. Note that $\Bb_{\Z} \supset \{ 1,x,y\}$ for all $\Z$. So, the first 3 entries of $\Psi_{\Z_0}-\Psi_{\Z}$ are all zero.
Let $\Z = \{(u_1,u_2),(v_1,v_2)  \}$, $p\in \Hh$, and let $a_{j,k } = \dotp{\partial_x^j\partial_y^k\varphi(0)}{p}$. Then, the 4th entry of $(\Psi_{\Z}-\Psi_{\Z_0}) p$ is 
$$a_{2,0} (u_1-v_1)^2/(u_2-v_2)^2 \lesssim \norm{\Z-\Z_0}^2
$$
and the 6th entry of $(\Psi_{\Z}-\Psi_{\Z_0}) p$ is 
$$
a_{3,0} \frac{(u_1-v_1)^3}{(u_2-v_2)^3} + 3a_{1,2}\frac{(u_1-v_1)}{(u_2-v_2)} + 3 a_{2,1} \frac{(u_1-v_2)^2}{(u_2-v_2)^2} \lesssim \norm{\Z-\Z_0}.
$$
Therefore, \begin{equation}\label{eq:nb1}
\norm{\Pi_{t\Z} (\Psi_{\Z_0}-\Psi_{\Z}) H_{t\Z_0}}_\Hh \leq C \max\{t^2\abs{\Z-\Z_0}_\infty^2, t^3 \norm{\Z-\Z_0} \},
\end{equation}
where $C$ depends only on $\varphi$.

 For any $\Z$, the 4th and 6th entries of $\Lambda_{t\Z}$ are $\Oo(t)$, the 1st entry of $(\Lambda_{t\Z_0} - \Lambda_{t\Z} )p$ is
 $$
 -t^2\frac{(u_1v_2-u_2v_1)^2}{2(u_2-v_2)^2} a_{2,0} + t^3\frac{ P_1(u_1,u_2,v_1,v_2)}{(u_2-v_2)^2},
 $$
 and the 2nd entry of $(\Lambda_{t\Z_0} - \Lambda_{t\Z})p$ is
 $$
 t \frac{(u_1-v_2)(u_1v_2-u_2v_1)}{(u_2-v_2)^2} a_{2,0} + t^2 \frac{P_2(u_1,u_2,v_1,v_2)}{(u_2-v_2)^2},
 $$
 where $P_1$ and $P_2$ are 4-variate polynomials.
 Therefore,
 $$\norm{\Pi_{t\Z} (\Lambda_{\Z_0}-\Lambda_z) H_{t\Z_0}}_\Hh \leq C' \max\{t^2 \norm{\Z-\Z_0}^2, t^3 \norm{\Z-\Z_0} \},$$
 where $C'$ depends only on $\varphi$.
 Combining this bound with \eqref{eq:nb1} gives the required result.
\end{proof}

\subsection{Non-degeneracy Transfer}\label{sec:nondegen-transfer}

We have already seen that $\eta_{V,t\Z_0}$ converges to $\eta_{W,\Z_0}$ as $t\to 0$. In this section, we show in Proposition~\ref{prop:degen_transfer} that non-degeneracy of $\eta_{W,\Z_0}$ in the sense of Definition~\ref{def:eta_W-nondegen} implies that any certificate defined via Hermite interpolation conditions at $\Z$ and sufficiently close to $\eta_{V,\Z_0}$ will also be a valid certificate saturating only at $\Z$. Furthermore, we show that in Proposition \ref{prop:bd2} that  $\eta_{V,t\Z_0}$ is non-degenerate and as a direct consequence of the main result of \cite{duval2015exact}, the solution of \eqref{eq-blasso} is stable with respect to $m_0$.

\begin{prop}\label{prop:degen_transfer}
Let $\Z_0=(z^*_1,z^*_2)\in \Xx^2$ and let $d_{\Z_0} = z^*_1-z^*_2$. Suppose that  $\eta_{W,\Z_0}(x)$ is non-degenerate.
Then, there exists $c_1,c_2,c_3>0$ such that given any $\eta\in \Cder{\infty}$, $t\in (0,c_2)$ and $\Z = (z_1,z_2)\in B(\Z_0,c_3)$ satisfying
\begin{itemize}
\item[(i)] $\eta(t z_i) = 1$, $\nabla \eta(t z_i) = 0$ for $i=1,2$,
\item[(ii)]  $\norm{\nabla^j \eta - \nabla^j \eta_{W,\Z_0}}_\infty \leq c_1$ for $\abs{j}\leq 5$,
\end{itemize}  we have that $\eta(x) <1$ for all $x\not\in t\Z$.
\end{prop}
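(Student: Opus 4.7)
The plan is to split $\Xx$ into a far field $\{|x|\geq r_0\}$ and a near field $B(0,r_0)$, chosen large enough (for $t < c_2$ and $\Z\in B(\Z_0, c_3)$) that both cluster points $tz_1, tz_2$ sit comfortably in its interior. In the far field the argument is routine: since $\eta_{W,\Z_0}$ is continuous and vanishes at infinity (via the smoothness of the kernel $\varphi$), and since $\eta_{W,\Z_0}(x) < 1$ away from $0$, one extracts a uniform gap $\delta > 0$ with $\eta_{W,\Z_0} \leq 1 - \delta$ on $\{|x|\geq r_0\}$; choosing $c_1 < \delta$, hypothesis (ii) gives $\eta \leq 1 - \delta + c_1 < 1$ there.

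For the near field, I would rotate coordinates so $d_{\Z_0}$ aligns with $e_2$ and write $x = (x_\perp, x_\parallel)$. The vanishing conditions at $0$ encoded by the least interpolant space $\Ss_{\Z_0}$ (augmented by the odd-derivative cancellation of Proposition~\ref{lem:odd_vanish} in the symmetric convolution setting of interest) yield the Taylor expansion
\[
	\eta_{W,\Z_0}(x) - 1 = \tfrac{1}{2}\,(x_\perp,\, x_\parallel^2)\, M\, (x_\perp,\, x_\parallel^2)^\top + R(x),
\]
where $R$ collects the remaining order-$4$ monomials $x_\perp^4$, $x_\perp^2 x_\parallel^2$ together with everything of degree $\geq 5$. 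Since $M\prec 0$, setting $\mu_0 \eqdef -\tfrac{1}{2}\lambda_{\max}(M) > 0$ and shrinking $r_0$ until Young's inequality absorbs $R$, I obtain the quartic bound
\[
	\eta_{W,\Z_0}(x) - 1 \leq -\mu_0\,(x_\perp^2 + x_\parallel^4), \qquad |x|\leq r_0.
\]

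The final step will be to transfer this bound to $\eta$ itself. Setting $F\eqdef \eta - \eta_{W,\Z_0}$, hypothesis (ii) gives $\|\partial^\alpha F\|_\infty \leq c_1$ for $|\alpha|\leq 5$, and condition (i) combined with the previous display forces $F(tz_i) = O(t^2)$ and $\nabla F(tz_i) = O(t)$. I would then expand $\eta$ to order $4$ at $0$ and substitute into the six Hermite equations at $(tz_1, tz_2)$, using the factorisation $\Gamma_{t\Z} = \Psi_\Z H_{t\Z}$ of Lemma~\ref{prop:cvgence} to disentangle the anisotropic scaling; this should show that the quartic Taylor polynomial of $\eta$ at $0$ is an $O(c_1 + \|\Z - \Z_0\|)$-perturbation of that of $\eta_{W,\Z_0}$, when expressed in the variables $(x_\perp, x_\parallel^2)$. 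For $c_1, c_2, c_3$ small enough, the combination of these ingredients with a uniform $C^5$ remainder estimate on $B(0, r_0)$ then yields $\eta(x) - 1 \leq -\tfrac{\mu_0}{2}(x_\perp^2 + x_\parallel^4)$, with equality only possibly at $x = tz_i$. Gluing with the far-field bound finishes the proof.

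The main difficulty is this last step: controlling the degree-$4$ Taylor polynomial of $\eta$ at $0$ from the Hermite data at $(tz_1, tz_2)$. The obstacle is the scaling mismatch — second order in $x_\perp$, fourth order in $x_\parallel$ — which demands bookkeeping that respects the $(u,v) = (x_\perp, x_\parallel^2)$ structure. Non-singularity of $\Psi_{\Z_0}$ (an implicit ingredient via Theorem~\ref{thm-conv-etaw}) is what allows the quartic form to remain controlled by $M$ up to $O(c_1)$ perturbations.
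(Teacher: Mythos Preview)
Your far-field argument and the quartic bound $\eta_{W,\Z_0}(x)-1\leq -\mu_0(x_\perp^2+x_\parallel^4)$ near $0$ are fine, but the transfer to $\eta$ has a genuine gap. The inequality you aim for, $\eta(x)-1\leq -\tfrac{\mu_0}{2}(x_\perp^2+x_\parallel^4)$, is \emph{false} at the interpolation points: at $x=tz_i$ the left side is $0$ while the right is of order $-t^4$. This is not cosmetic. The single-well form $x_\perp^2+x_\parallel^4$ is the correct model for $\eta_{W,\Z_0}-1$, but $\eta-1$ vanishes at two points $\sim t$ apart along $d_{\Z_0}$, so any strictly negative envelope must be a double well---schematically $x_\perp^2+(x_\parallel^2-t^2)^2$. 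Concretely, the six Hermite equations (via $\Gamma_{t\Z}=\Psi_{\Z}H_{t\Z}$) yield relations such as $\partial_\parallel^2\eta(0)=-2t^2 b_{4,0}+O(t^3)$ and $\partial_\parallel^3\eta(0)=-2t^2 b_{5,0}+O(t^3)$; hence in the critical zone $|x_\parallel|\sim t$, $|x_\perp|\lesssim t^2$ these ``lower-order'' contributions are of the \emph{same} size $O(t^4)$ as the quartic form and cannot be absorbed into it by shrinking constants. One must track the exact algebraic combination to see that everything reassembles into the negative-definite form $M$ applied to $(x_\perp,\,x_\parallel^2-t^2)$, and your proposal does not do this. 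A secondary issue: invoking Proposition~\ref{lem:odd_vanish} restricts you to symmetric convolution kernels, whereas the statement is for general $\Phi$.

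The paper sidesteps the need for an explicit envelope by arguing by contradiction and compactness. One assumes a sequence $(\eta_n,t_n,\Z_n,x_n)$ with $\eta_n(x_n)=1$, $x_n\notin t_n\Z_n$; since $\eta_{W,\Z_0}<1$ off $0$ this forces $x_n\to 0$. Writing $x_n=(f_n u_n,\,g_n v_n)$, Taylor-expanding $\eta_n$ at $0$, and substituting the six Hermite relations produces an identity (equation~\eqref{eq:7cases}) whose limit is analysed case by case according to the relative sizes of $f_n,g_n,t_n$. In every regime it collapses to $a^2\,\partial_{d^\perp}^2\eta_W(0)+ab\,\partial_{d^\perp}\partial_d^2\eta_W(0)+\tfrac{b^2}{12}\,\partial_d^4\eta_W(0)=0$ for some $(a,b)\neq(0,0)$, contradicting $M\prec 0$. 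That scaling-regime analysis is precisely the ``exact algebraic combination'' step your direct approach is missing; if you want to salvage the quantitative route, the fix is to work with the shifted variables $(x_\perp,\,x_\parallel^2-t^2)$ from the outset.
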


\begin{proof}

For a contradiction, suppose that  for all $n>0$, there exists $\eta_n$, $\Z_n\in \Xx^2$ with $\norm{\Z_n-\Z_0}\leq 1/n$  and $t_n\in (0,1/n)$ such that for $j=1,2$,  $\eta_n(t_n z_{n,j}) = 1$, $\nabla \eta_n(t_n z_{n,j}) = 0$ with  $\norm{\eta_n - \eta_{W,\Z_0}}_\infty \leq 1/n$ and $x_n \not\in t_n \Z_n$ such that $\eta_n(x_n) = 1$. Note that since $\eta_{W,\Z_0}(x)<1$ for all $x\neq 0$, we must have that $x_n \to 0$ as $n\to \infty$. Let $x_n =  (f(t_n) u_n, g(t_n) v_n)$, where $\lim_{t\to 0}f(t) =0$ and $\lim_{t\to 0} g(t) = 0$. 

We first fix $n$ and derive some equations satisfied by $\eta_n$ and its derivatives. Without loss of generality, let $z_{n,1} = (1,0)$ and $z_{n,2} = (-1,0)$ (otherwise, we simply consider derivatives with respect to direction $d_\Z$ and $d_\Z^\perp$ instead of the canonical directions). 
To simplify notation, let us drop the subscript $n$ and simply write $\eta,t,c_1,c_2,u,v, \Z$ for $\eta_n,t_n,c_{n,1},c_{n,2},u_n,v_n, \Z_n$.
By expanding $\eta$ about $0$, we obtain
\begin{align*}
\eta(X) &= \sum_{\abs{\al}\leq 3} b_\alpha X^\al  + R_{0,4}(X) X_2^4 + R_{1,3}(X) X_1 X_2^3 + R_{2,2}(X) X_1^2 X_2^2 + R_{3,1}(X) X_1^3 X_2 \\
&+ b_{4,0} X_1^4 + R_{4,1}(X) X_1^4 X_2 + R_{5,0}(X) X_1^5,
\end{align*}
where given $\alpha\in \NN_0^2$, $$b_{\al}\eqdef \frac{\partial^\al \eta(0)}{\al !},\qquad
R_{\al }(X) \eqdef \frac{\abs{\al}}{\al !} \int_0^1 (1-s)^{\abs{\al}-1} \partial^\al \eta(s X) \mathrm{d}s.
$$
To simplify notation, in the following, we write
$
b_{\al} = R_{\al}
$ and note that thanks to assumption (ii), each of these terms is uniformly bounded in $n$.
Let
\begin{align*}
&\iota_0 \eqdef \eta((f(t) u, g(t) v)),\\
&\iota_1 \eqdef \eta(t z_1),
&\partial_{X_1} \iota_1 \eqdef \partial_{X_1} \eta(t z_1),\quad
&\partial_{X_2} \iota_1 \eqdef \partial_{X_2} \eta(t z_1),\\
&\iota_2 = \eta(t z_2),\quad
&\partial_{X_1} \iota_2 \eqdef \partial_{X_1} \eta(t z_2),\quad
&\partial_{X_2} \iota_2 \eqdef \partial_{X_2} \eta(t z_2).
\end{align*}
Then,
\begin{align*}
\gamma_1 \eqdef \frac{\iota_1+\iota_2}{2} =  b_{0,0} +t^{4} b_{4,0} = 1 \\
\gamma_2 \eqdef \frac{\iota_1-\iota_2}{2t} =  b_{1,0} + t^{2} b_{3,0}  + t^{4} b_{5,0} = 0 \\
\gamma_3 \eqdef \frac{\partial_{X_2} \iota_1 + \partial_{X_2}\iota_2}{2} = b_{0,1} + t^{2} b_{2,1} + t^{4} b_{4,1} =0 \\
\gamma_4 \eqdef \frac{\partial_{X_2} \iota_1 - \partial_{X_2} \iota_2}{2t} = b_{1,1} + t^{2} b_{3,1} = 0 \\
\gamma_5 \eqdef \frac{\partial_{X_1} \iota_1 - \partial_{X_1} \iota_2}{4t} =  b_{2,0} + 2 t^{2} b_{4,0} =0  \\
\gamma_6 \eqdef \frac{1}{2t^2}(\frac{\partial_{X_1} \iota_1-\partial_{X_1} \iota_2}{2}-\gamma_2) =  b_{3,0} +2 t^{2} b_{5,0}  = 0
\end{align*}

By subtracting appropriate multiples of $\{\gamma_j\}_{j=1}^6$ from $\iota_0$, we obtain
$$
0 =  \gamma \eqdef \iota_0 - \gamma_1 - g(t) u \gamma_2 - g(t) v \gamma_3 - f(t) g(t)  uv \gamma_4 - f(t)^{2}u^2 \gamma_5 - (f(t)^3 u^3 - t^{2} f(t) u)\gamma_6,
$$
and so,
\begin{equation}\label{eq:7cases}
\begin{split}
0 =& g(t)^{2} v^2 b_{0,2} + g(t)^{2}\left( g(t)^{2}v^3 b_{0,3} + f(t) uv^2 b_{1,2} + g(t)^{1} v^2 b_{0,4} + f(t)g(t)  uv^3 b_{1,3}+ f(t)^2 u^2 v^2 b_{2,2} \right)\\
&+  (f(t)^2 g(t) u^2 v - t^2 g(t) v) b_{2,1}  + u f(t) ( f(t)^1 g(t)  u^2 v -t^2 g(t) v) b_{3,1}
\\&+ (f(t)^{2} u^2 - t^2)^2 b_{4,0} + f(t) u (f(t)^{2} u^2 - t^2)^2  b_{5,0} + g(t) v(f(t)^{4} u^4-t^4) b_{4,1}.
\end{split}
\end{equation}

Note that since $\Z_n\to \Z$, we have $\lim_{n\to \infty} \partial^j_{d_{\Z_n}} \partial^k_{d_{\Z_n}^\perp} \eta_n(0) = \partial^j_{d_{\Z_0}} \partial^k_{d_{\Z_0}^\perp} \eta_{W,\Z_0}(0)$. By possible extracting a subsequence, assume that $(u_n,v_n)\to (u,v)$. Then, by considering the limit of \eqref{eq:7cases}, we arrive at one of the 7 following cases:
\begin{itemize}
\item[(i)] If $\lim_{t\to 0} f(t)/t \leq 1$ and $\lim_{t\to 0} g(t)/t^2 = 0$, then $\partial_{d_{\Z_0}}^4\eta_{W,\Z_0}(0) = 0$.
\item[(ii)] If $\lim_{t\to 0} t/f(t) \leq 1$ and $\lim_{t\to 0} g(t)/f(t)^2 = 0$, then $\partial_{d_{\Z_0}}^4\eta_{W,\Z_0}(0) = 0$.
\item[(iii)] If $\lim_{t\to 0} f(t)/t \leq 1$ and $\lim_{t\to 0} t^2/ g(t)= 0$, then $\partial_{d_{\Z_0}^\perp }^2\eta_{W,\Z_0}(0) = 0$.

\item[(iv)] If $\lim_{t\to 0} f(t)/t =0$ and $\lim_{t\to 0} g(t)/t^2 = 1$, then  $v^2\partial_{d_{\Z_0}^\perp}^2\eta_{W,\Z_0}(0) - v\partial_{d_{\Z_0}^\perp} \partial_{d_{\Z_0}}^2\eta_{W,\Z_0} (0) + \partial_{d_{\Z_0}}^4\eta_{W,\Z_0}(0)= 0$.

\item[(v)] If $\lim_{t\to 0} f(t)/t = 1$ and $\lim_{t\to 0} g(t)/t^2=1$, then $v^2\partial_{d_{\Z_0}^\perp}^2\eta_{W,\Z_0}(0) + v(u^2-1)\partial_{d_{\Z_0}^\perp} \partial_{d_{\Z_0}}^2\eta_{W,\Z_0}(0)  + (u^2-1)^2 \partial_{d_{\Z_0}}^4\eta_{W,\Z_0}(0) = 0$.

\item[(vi)] If $\lim_{t\to 0} t/f(t) = 0$ and $\lim_{t\to 0} g(t) /f(t)^2 = 1$:   $v^2\partial_{d_{\Z_0}^\perp}^2\eta_{W,\Z_0}(0) + v u^2  \partial_{d_{\Z_0}^\perp} \partial_{d_{\Z_0}}^2\eta_{W,\Z_0}(0)  + u^4 \partial_{d_{\Z_0}}^4\eta_{W,\Z_0}(0)= 0$.
\item[(vii)] If $\lim_{t\to 0} t/ f(t) \leq 1$ and $\lim_{t\to 0} f(t)^2/ g(t) = 0$, then $\partial_{d_{\Z_0}^\perp }^2\eta_{W,\Z_0}(0) = 0$.
\end{itemize}
In the above, the conclusion of cases (i)-(v) is obtained by taking the limit of \eqref{eq:7cases} after dividing by $t^4$, and the conclusion of cases (vi)-(vii) is obtained by taking the limit of \eqref{eq:7cases} after dividing by $g(t)^2$.

Therefore, it follows that there exists $(a,b)\in \RR^2$ such that 
$$
a^2 \partial_{d_{\Z_0}^\perp}^2 \eta_{W,\Z_0}(0) +a b \partial_{d_{\Z_0}}^2 \partial_{d_{\Z_0}^\perp} \eta_{W,\Z_0}(0) + \frac{1}{12} b^2 \partial_{d_{\Z_0}}^4 \eta_{W,\Z_0}(0) =0,
$$
which is a contradiction to the assumption that $\eta_{W,\Z_0}$ is non-degenerate. 
\end{proof}

\begin{rem}
From Proposition \ref{prop:degen_transfer}, it follows that $\eta_{V,t\Z_0}$ is a valid certificate for all $t$ sufficiently small. The following result shows that $\eta_{V,t\Z_0}$ is in fact non-degenerate, and therefore, as a direct consequence of the main result of \cite{duval2015exact}, the solution of \eqref{eq-blasso} is support stable with respect to $m_0$.
\end{rem}

\begin{prop}\label{prop:bd2}
Let $\Z \in \Xx^2$. Assume that $\partial_{d_\Z}^2\partial_{d\Z^\perp} \eta_{W,\Z}(0) = 0$. Then,
\begin{align*}
&\partial_{d_\Z}^2 \Phi_{t\Z}^* p_{V,t\Z} = \frac{t^2}{12} \partial_{d_\Z}^4 \eta_{W,\Z}(0) \binom{1}{1} + \Oo(t^3), \quad \partial_{d_\Z^\perp} \partial_{d_\Z} \Phi_{t\Z}^* p_{V,t\Z} = \frac{t}{2} \partial_{d_\Z}^2\partial_{d_\Z^\perp} \eta_{W,\Z}(0)\binom{-1}{1}  + \Oo(t^2), \\
& \partial_{d_\Z^\perp}^2 \Phi_{t\Z}^* p_{V,t\Z} = \partial_{d_\Z^\perp}^2 \eta_{W,\Z}(0) \binom{1}{1} + \Oo(t).
\end{align*}
Therefore, provided that $\eta_{W,\Z}$ is non-degenerate, then for all $t$ sufficiently small, $\nabla^2 \eta_{V,t\Z}(tz)\prec 0$ for all $z\in Z$. 
\end{prop}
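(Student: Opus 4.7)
The plan is to compute each entry of the Hessian of $\eta_{V,t\Z}$ at the two spikes $tz_1,tz_2$ by translating the six Hermite conditions $\Gamma_{t\Z}^* p_{V,t\Z} = \binom{1_2}{0_4}$ into explicit relations among the Taylor coefficients of $\eta_{V,t\Z}$ at $0$, and then using the pointwise convergence of these coefficients to the corresponding ones for $\eta_{W,\Z}$ to bring the $\eta_{W,\Z}$-derivatives into play. Since the statement only involves the intrinsic directional derivatives $\partial_{d_\Z}^k\partial_{d_\Z^\perp}^j$, by a change of coordinates one may assume $\Z = \{(0,0),(0,1)\}$, so that $d_\Z$ is the $y$-axis and $d_\Z^\perp$ the $x$-axis, and then apply the decomposition $\Gamma_{t\Z_0}^* = H_{t\Z_0}^*(\Psi_{\Z_0}^*+\Lambda_{t\Z_0}^*)$ of Lemma~\ref{prop:cvgence}.

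Setting $a_{j,k} \eqdef \partial_x^j\partial_y^k \eta_{V,t\Z_0}(0)$ and using $H_{t\Z_0}^{*,-1}\binom{1_2}{0_4}=\delta_6$ (immediate from the explicit form of $H_{t\Z_0}^{*,-1}$), the Hermite constraint reduces to $(\Psi_{\Z_0}^*+\Lambda_{t\Z_0}^*)p_{V,t\Z_0}=\delta_6$. The first three coordinates give $a_{0,0}=1$ and $a_{0,1}=a_{1,0}=0$; the last three yield
\begin{align*}
a_{0,2} + \sum_{j\geq 4} \tfrac{(6-2j)\,t^{j-2}}{j!}\,a_{0,j} &= 0, \\
a_{1,1} + \sum_{j\geq 2} \tfrac{t^{j-1}}{j!}\,a_{1,j} &= 0, \\
a_{0,3} + \sum_{j\geq 4} \tfrac{(6j-12)\,t^{j-3}}{j!}\,a_{0,j} &= 0.
\end{align*}
Solving these to the required precision gives $a_{0,3} = -\tfrac{t}{2}a_{0,4}+\Oo(t^2)$, $a_{0,2} = \tfrac{t^2}{12}a_{0,4}+\Oo(t^3)$ and $a_{1,1} = -\tfrac{t}{2}a_{1,2}+\Oo(t^2)$. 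The bound $\norm{p_{V,t\Z}-p_{W,\Z}}_\Hh = \Oo(t)$ supplied by Theorem~\ref{thm-conv-etaw} transfers to every Taylor coefficient via $a_{j,k} = \dotp{\partial_x^j\partial_y^k\varphi(0)}{p_{V,t\Z}}$, giving $a_{j,k} = \partial_x^j\partial_y^k \eta_{W,\Z}(0)+\Oo(t)$; in particular $a_{0,4}$, $a_{1,2}$ and $a_{2,0}$ are respectively $\partial_{d_\Z}^4\eta_{W,\Z}(0)$, $\partial_{d_\Z}^2\partial_{d_\Z^\perp}\eta_{W,\Z}(0)$ and $\partial_{d_\Z^\perp}^2\eta_{W,\Z}(0)$ up to $\Oo(t)$.

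Each claimed formula is then obtained by Taylor-expanding the relevant second derivative at $tz_1=(0,0)$ and $tz_2=(0,t)$. At $(0,0)$ the values are simply $a_{0,2}$, $a_{1,1}$ and $a_{2,0}$. At $(0,t)$ one has $\partial_y^2\eta_{V,t\Z_0}(0,t) = a_{0,2}+a_{0,3}\,t+\tfrac{1}{2}a_{0,4}\,t^2+\Oo(t^3)$, in which the $-\tfrac{t^2}{2}a_{0,4}$ contribution coming from $a_{0,3}\,t$ cancels the explicit $+\tfrac{t^2}{2}a_{0,4}$, leaving $\tfrac{t^2}{12}a_{0,4}+\Oo(t^3)$ --- identical to the value at $(0,0)$ and producing the $\binom{1}{1}$ pattern. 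Similarly $\partial_x\partial_y\eta_{V,t\Z_0}(0,t) = a_{1,1}+a_{1,2}\,t+\Oo(t^2) = \tfrac{t}{2}a_{1,2}+\Oo(t^2)$, opposite in sign to $a_{1,1}$ and producing $\binom{-1}{1}$, while $\partial_x^2\eta_{V,t\Z_0}(0,t) = a_{2,0}+\Oo(t)$ is constant at leading order.

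For the negative-definiteness conclusion, the standing hypothesis $\partial_{d_\Z}^2\partial_{d_\Z^\perp}\eta_{W,\Z}(0)=0$ reduces the off-diagonal of the Hessian to $\Oo(t^2)$, and Definition~\ref{def:eta_W-nondegen} combined with this vanishing forces both $\partial_{d_\Z^\perp}^2\eta_{W,\Z}(0)$ and $\partial_{d_\Z}^4\eta_{W,\Z}(0)$ to be strictly negative. In the $(d_\Z^\perp,d_\Z)$ frame the Hessian at $tz_i$ therefore has leading form $\diag\bigl(\partial_{d_\Z^\perp}^2\eta_{W,\Z}(0),\,\tfrac{t^2}{12}\partial_{d_\Z}^4\eta_{W,\Z}(0)\bigr)$ with diagonal corrections of sizes $\Oo(t)$ and $\Oo(t^3)$ and an $\Oo(t^2)$ off-diagonal; its trace is strictly negative and its determinant equals $\tfrac{t^2}{12}\partial_{d_\Z^\perp}^2\eta_{W,\Z}(0)\,\partial_{d_\Z}^4\eta_{W,\Z}(0)+\Oo(t^3)>0$ for small $t$, so $\nabla^2\eta_{V,t\Z}(tz_i)\prec 0$. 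The only delicate step is tracking the order-$t^2$ cancellation in the expansion of $\partial_y^2\eta_{V,t\Z_0}(0,t)$: one needs the explicit leading coefficient of $a_{0,2}$, not merely the bound $a_{0,2}=\Oo(t^2)$, in order to match the value at $(0,0)$ and secure the determinant estimate.
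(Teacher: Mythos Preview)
Your proof is correct and rests on the same two ingredients as the paper's: the decomposition $\Gamma_{t\Z_0}^* = H_{t\Z_0}^*(\Psi_{\Z_0}^*+\Lambda_{t\Z_0}^*)$ from Lemma~\ref{prop:cvgence}, and the convergence $p_{V,t\Z}\to p_{W,\Z}$ from Theorem~\ref{thm-conv-etaw}. The organisation differs slightly. The paper works operator-theoretically: it decomposes the map $a\mapsto \partial_y^2\Phi_{t\Z}a$ as $\Psi_\Z\tilde V_{t\Z}^{(1)}+(\text{remainder})$, applies its adjoint to $p_{V,t\Z}$, and uses the identity $\tilde V_{t\Z}^{(1),*}(\Psi_\Z^*+\Lambda_{t\Z}^*)\Gamma_{t\Z}^{*,\dagger}\binom{1_2}{0_4}=0$ to kill the main piece before estimating the residual. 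You instead read the six scalar constraints directly off $(\Psi_{\Z_0}^*+\Lambda_{t\Z_0}^*)p_{V,t\Z_0}=\delta_6$, solve them for the low-order Taylor coefficients $a_{0,2},a_{0,3},a_{1,1}$ of $\eta_{V,t\Z_0}$, and then Taylor-expand the Hessian entries at the two spike points by hand. Your route is arguably more transparent --- the cancellation at $(0,t)$ that produces the $\binom{1}{1}$ pattern is made completely explicit --- and it avoids introducing the auxiliary matrices $\tilde V_{t\Z}^{(j)}$; the paper's version is closer in spirit to the operator manipulations used elsewhere in Section~\ref{sec-two-spikes}. The final negative-definiteness argument is handled identically in both.
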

\begin{proof}
Without loss of generality, let $\Z =\{(0,0), (0,1)\}$.  We shall show that
\begin{align*}
&\partial_y^2 \Phi_{t\Z}^* p_{V,t\Z} = \frac{t^2}{12} \partial_y^4 \eta_{W,\Z}(0) \binom{1}{1} + \Oo(t^3), \quad \partial_x \partial_y \Phi_{t\Z}^* p_{V,t\Z} =\frac{t}{2} \partial_y^2\partial_x \eta_{W,\Z}(0) \binom{-1}{1}+ \Oo(t^2), 
\\
& \partial_x^2 \Phi_{t\Z}^* p_{V,t\Z} = \partial_x^2 \eta_{W,\Z}(0) \binom{1}{1} + \Oo(t).
\end{align*}
Let $a\in \RR^2$.
By Taylor expanding about 0, we obtain
\begin{align*}
\partial_y^2 \Phi_{t\Z} a= \Psi_\Z \tilde V_{t\Z}^{(1)} a + a_2 \sum_{j=2}^\infty \frac{t^j}{j!}\partial_y^{j+2}\varphi(0), 
\qquad \partial_x\partial_y \Phi_{t\Z} a = \Psi_\Z \tilde V_{t\Z}^{(2)} a + a_2 \sum_{j\geq 1} \frac{t^j}{j!} \partial_y^{j+1}\partial_x \varphi(0)
\end{align*}
where
$$
\tilde  V_{t\Z}^{(1)} = \begin{pmatrix}
0& 0\\
0&0\\
0&0\\
1&1\\
0&0\\
0&t
\end{pmatrix}\qquad \tilde  V_{t\Z}^{(2)} = \begin{pmatrix}
0& 0\\
0&0\\
0&0\\
0&0\\
1&1\\
0&0
\end{pmatrix},
$$
and
$$
\partial_x^2 \Phi_{t\Z} a = a_1 \partial_x^2\varphi(0) + a_2 \left( \sum_{j\geq 0} \frac{t^j}{j!}\partial_y^j\partial_x^2 \varphi(0)  \right).
$$

We first consider $\partial_y^2 \Phi_{t\Z}^* p_{V,t\Z} $:
Recall the definition of $\Lambda_{t\Z}$ from \eqref{eq-2spikes-Gamma} and observe that
\begin{equation}\label{bd1}
\tilde V_{t\Z}^{(1),*}(\Psi^*_{\Z} + \Lambda_{t\Z}^*)\Gamma_{t\Z}^{*,\dagger}\binom{1_2}{0_4} = \tilde V_{t\Z}^{(1),*}H_{t\Z}^{*,-1} \Gamma_{t\Z}^* \Gamma_{t\Z}^{*,\dagger}\binom{1_2}{0_4}=\tilde V_{t\Z}^{(1),*} \delta_6 = 0.
\end{equation}
Moreover, using the fact that $p_{V,t\Z} = p_{W,\Z} + \Oo(t)$,
\begin{equation}\label{bd2}
\binom{0}{ \sum_{j=2}^\infty \frac{t^j}{j!}\partial_y^{j+2}\varphi(0)} p_{V,t\Z} = \binom{0}{  \frac{t^2}{2}\dotp{p_{V,t\Z}}{\partial_y^{4}\varphi(0)} + \Oo(t^3)}
=  \binom{0}{  \frac{t^2}{2} \partial_y^{4} \eta_{W,\Z}(0) + \Oo(t^3)}.
\end{equation}
Also,
\begin{equation}\label{bd3}
\begin{split}
\tilde V_{t\Z}^{(1),*}\Lambda_{t\Z}^* p_{V,t\Z} &=\tilde V_{t\Z}^{(1,*)} \begin{pmatrix}
0\\0\\0\\
\frac{-t^2}{12} \dotp{\partial_y^4 \varphi(0)}{p_{V,t\Z}} + \Oo(t^3)
\\
\frac{t}{2} \dotp{\partial_x\partial_y^2 \varphi(0)}{p_{V,t\Z}} + \Oo(t^2)\\
\frac{t}{2}\dotp{\partial_y^4\varphi(0)}{p_{V,t\Z}} + \Oo(t^2)
\end{pmatrix}\\
&
= \tilde V_{t\Z}^{(1),*} \begin{pmatrix}
0\\0\\0\\
\frac{-t^2}{12} \partial_y^4 \eta_{W,\Z}(0) + \Oo(t^3)
\\
t \partial_y^2 \partial_x \eta_{W,\Z}(0)  + \Oo(t^2) \\
\frac{t}{2}\partial_y^4\eta_{W,\Z}(0)  + \Oo(t^2)
\end{pmatrix} = \binom{\frac{-t^2}{12} \partial_y^4 \eta_{W,\Z}(0)}{\frac{5t^2}{12}\partial_y^4\eta_{W,\Z}(0)} + \Oo(t^3).
\end{split}
\end{equation}
Summing \eqref{bd1}, \eqref{bd2} and \eqref{bd3} gives the required bound on $\partial_y^2\Phi^*_{t\Z}p_{V,t\Z}$.

For the second bound,
$$
\partial_x\partial_y\Phi_{t\Z} = \Psi_{\Z} \tilde V_{t\Z}^{(2)} + \binom{0}{\sum_{j\geq 1} \frac{t^j}{j!}\partial_y^{j+1}\partial_x \varphi(0)} = (\Psi_z+\Lambda_{t\Z})\tilde V^{(2)}_{t\Z} -\Lambda_{t\Z} \tilde V^{(2)}_{t\Z} + \binom{0}{\sum_{j\geq 1} \frac{t^j}{j!}\partial_y^{j+1}\partial_x \varphi(0)}.
$$
As before, $\tilde V^{(2),*}_{t\Z} (\Psi_z+\Lambda_{t\Z})^* p_{V,t\Z} = 0$, 
$$
\dotp{\sum_{j\geq 1} \frac{t^j}{j!}\partial_y^{j+1}\partial_x \varphi(0)}{p_{V,t\Z}} = t \partial_y^2\partial_x \eta_{W,\Z}(0) + \Oo(t^2),
$$
and
$$
\tilde V_{t\Z}^{(2),*} \Lambda_{t\Z}^* p_{V,t\Z} = \frac{t }{2} \partial_y^2\partial_x \eta_{W,\Z}(0) \binom{1}{1} + \Oo(t^2).
$$
So, $\partial_y\partial_x\Phi_{t\Z}^* p_{V,t\Z} =\frac{t}{2} \partial_y^2\partial_x \eta_{W,\Z}(0) \binom{-1}{1} + \Oo(t^2)$.

The proof of the last bound follows because 
$$
\partial_x^2\Phi_{t\Z}^* p_{V,t\Z} = \partial_x^2\Phi_{t\Z}^* (p_W + \Oo(t)) = \partial_x^2 \eta_{W,\Z}(0) \binom{1}{1} + \Oo(t).$$
\end{proof}

\subsection{Proof of Theorem \ref{thm-twospikes}}

First note that if Theorem \ref{thm-twospikes} is true for $\Z_0 \eqdef \Z$ for some fixed $\Z\in \Xx^2$, then given any $c\in \Xx$, the result is also true for $\Z_0\eqdef c+\Z$. Let $\eta^{\Phi,y,\la}$ be the solution to the dual formulation of \eqref{eq-blasso}, and let $\eta_{V}^{\Psi,y}$ the the associated precertificate. Then, by letting $T:\Xx\to \Xx$, $z\mapsto z+ tc$ and thanks to the reparametrization observations of Appendix \ref{sec-reparam}, we have 
$$
\eta_V^{\Phi,\Phi m_{a,t(\Z_0+c)}} = \eta_V^{\Phi\circ T_\sharp,\Phi \circ T_\sharp m_{a,t \Z_0}}(\cdot - tc).
$$
Therefore $\eta_{W,\Z_0+c}^\Phi = \eta_{W,\Z_0}^{\Phi\circ T_\sharp}$. So, $\eta_{W,\Z_0+c}^\Phi $ is non-degenerate if and only if $\eta_{W,\Z_0}^{\Phi\circ T_\sharp}$ is non-degenerate. Moreover, since (provided that $\la$ and $w$ satisfies the conditions of Theorem \ref{thm-twospikes}) non-degeneracy of $\eta_{W,\Z_0}^{\Phi\circ T_\sharp}$ implies that $\eta^{\Phi\circ T_\sharp,\Phi\circ T_\sharp m_{a_0,t\Z_0}+w,\la}$ saturates only at $\Z$ and $m_{a,\Z}$ is the unique solution with $a$ and $\Z$ satisfying \eqref{eq:error-main}, we know that  $\eta^{\Phi ,\tilde y,\la}$ with $\tilde y = \Phi   m_{a_0,t(\Z_0+c)}+w$, saturates only at $\Z+tc$ and $m_{a,\Z+tc}$ is the unique solution. Therefore, without loss of generality, it suffices to prove Theorem \ref{thm-twospikes} for $\Z_0 = \{(0,0), (a,b)\}$ for some $(a,b)\in \Xx$. Furthermore,  we simply consider  $\Z_0 = \{(0,0), (0,1)\}$, since otherwise, in the following, we can simply consider derivatives with respect to $d_{\Z_0}$ and $d_{\Z_0}^\perp$ instead of the canonical directions.

\subsubsection{Implicit Function Theorem}
\label{sec-proof-implicit-func-thm}

From the first order optimality conditions of \eqref{eq-blasso}, we have that $m_{a,\Z}$ solves \eqref{eq-blasso} if and only if
\begin{equation}\label{eq:dual-cert-cond}
\frac{\Phi^*(\Phi m_{a_0,\Z_0} + w -\Phi m_{a,\Z})}{\lambda} \in \partial \abs{\cdot}(m_{a,\Z}).
\end{equation}
Therefore,  we aim to construct a $\Cder{1}$ mapping  $g: (\lambda,w)\in \RR \times \Hh \mapsto (a,\Z)\in \RR^2\times \Xx^2$ such that $(a,\Z)$ satisfies \eqref{eq:dual-cert-cond}. Furthermore, bounds on the derivatives of $g$ will provide conditions on the required speed at which $(\lambda,w)$ converge to 0. To this end, following \cite{duval2015exact} and \cite{2017-denoyelle-jafa}, let $u= (a,\Z)$ and $v=(\lambda,w)$, and define
$$
f_t(u,v)\eqdef \Gamma_{t\Z}^* (\Phi_{t\Z} a- \Phi_{t\Z_0} a_0 - w) + \lambda \binom{1_N}{0_{2N}}.
$$
To construct candidate solutions to \eqref{eq-blasso},  we search for parameters $u$ and $v$ for which $f_t(u,v)=0$.

For $a\in\RR^2$ and $\al\in \NN_0^2$, let $(\partial^\alpha \Phi_{t\Z})a \eqdef \sum_{j=1}^2 a_j \partial^\alpha \varphi(t z_j) $. Then,
the derivatives of $f_t$ are
\begin{equation}\label{eq:f_t_d}
\begin{split}
\partial_u f_t(u,v) &= \Gamma_{t\Z}^* \Gamma_{t\Z} J_{ta} + t \begin{pmatrix}
0 & \diag(\partial_x \Phi_{t\Z}^* A) & \diag(\partial_y \Phi_{t\Z}^* A)\\
0 & \diag(\partial_x^2 \Phi_{t\Z}^* A) & \diag(\partial_y \partial_x\Phi_{t\Z}^* A)\\
0 & \diag(\partial_y\partial_x \Phi_{t\Z}^* A) & \diag(\partial_y^2 \Phi_{t\Z}^* A)
\end{pmatrix}\\
\partial_v f_t(u,v) &= \begin{pmatrix}
\binom{1_N}{0_{2N}} & -\Gamma_{t\Z}^*
\end{pmatrix}
\end{split}
\end{equation}
where
$$
J_{ta} \eqdef \begin{pmatrix}
\Id_N & 0 & 0\\
0 & t\diag(a) & 0 \\
0 & 0 & t\diag(a)
\end{pmatrix}, \qquad A \eqdef \Phi_{t\Z} a- \Phi_{t\Z_0} a_0 - w.
$$

So, $f_t$ is a continously diffferentiable fucntion, $\partial_u f_t(u_0,0) = \Gamma_{t\Z_0}^* \Gamma_{t\Z_0} J_{ta_0}$ is invertible by Proposition~\ref{prop:cvgence} and $f_t(u_0,0) = 0$. Therefore, we may apply the Implicit Function Theorem to deduce that there exists a neighbourhood $V_t$ of $0$ in $\RR\times \Hh$, a neighbourhood $U_t$ of $u_0$ in $\RR^2\times \Xx^2$ and a $\Cder{1}$ function $g_t:V_t\to U_t$  such that for all $(u,v)\in U_t\times V_t$, $f_t(u,v) = 0$ if and only if $u=g_t(v)$. Furthermore, the derivative of $g_t$ is
\begin{equation}\label{eq:deriv-g}
\mathrm{d}g_t(v) = -\left( \partial_u f_t(g_t(v),v)\right)^{-1} \partial_v f_t(g_t(v),v).
\end{equation}
So, to prove Theorem~\ref{thm-twospikes}, given $(\lambda,w)$, for $(a,\Z)=g((\lambda,w))$, we simply need to establish the following two facts.\begin{enumerate}
\item  $g_t$ is well defined on a region $V_t$ which contains a ball of radius on the order of $t^4$.

\item $m_{a,\Z}$ is a solution of \eqref{eq-blasso}, i.e. it satisfies \eqref{eq:dual-cert-cond}. To this end, we define the associated certificate as
\begin{equation}\label{eq:candidate_pV}
p_{\lambda,t} \eqdef \frac{\Phi^*(\Phi_{\Z} a - \Phi_{t\Z_0} a_0 -w)}{\lambda},\qquad \eta_{\lambda,t}\eqdef \Phi^* p_{\lambda,t}
\end{equation} 
and show (Proposition~\ref{prop:implicit_dual_convergence}) that $p_{\lambda}$ converges to $p_{V,t}$ as $(\lambda,w)\to 0$. Therefore, by Lemma~\ref{prop:cvgence} and Theorem~\ref{prop:degen_transfer}, $p_{\lambda}$ must satisfy \eqref{eq:dual-cert-cond}.
\end{enumerate}

We remark that although the key steps of this proof are the same as the 1-D proof presented in \cite{2017-denoyelle-jafa}, the technical details differ due to the anisotropy of the limiting certificate $\eta_{W,\Z_0}$ and since some of the proofs in \cite{2017-denoyelle-jafa} rely on purely 1-D tools.

\subsubsection{Bounds on $V_t$}
\label{sec-proof-thm-boundvt}

For $r>0$, let $B(0,r) \subset \RR\times \Hh$ be defined as $B(0,r)\eqdef \enscond{(\la,w)}{\la\in [0,r), \norm{w}_\Hh < r}$. 

To show that we can construct a function $g_t^*$ which is defined on a ball of radius $t^4$, let $V_t^*$ be defined as follows: $V^*_t \eqdef \bigcup_{v\in \Vv} V$, where $\Vv$ is the collections of all open sets $V\subset \RR\times \Hh$ such that
\begin{itemize}
\item $0\in V$,
\item $V$ is star-shaped with respect to $0$,
\item $V\subset B(0,c_* t^{4}
)$ where $c_*>0$ is the constant defined in Lemma~\ref{lem:Ginv}.
\item there exists a $\Cder{1}$ function $g:V\to \RR^2\times \Xx^2$ such that $g(0) = u_0$, $f_t(g(v),v)=0$ for all $v\in V$,
\item $g(V)\subset   \Bb_{c_*}(a_0) \times \Bb_{t c_*}(\Z_0) $.

\end{itemize}
Note that the definition of this set $V^*_t$ is the same as in \cite{2017-denoyelle-jafa}, except for the last condition, where we require that $\norm{\Z-\Z_0}\leq c_* t$ for all $\Z$ such that $ (a,\Z)\in g(V)$. This is natural, since we eventually require that the distance between $p_{\lambda,t}$ and $p_{V,\Z_0}$ is $\Oo(t)$. As explained in \cite[Section 4.3]{2017-denoyelle-jafa}, this set $V^*_t$ is well defined and non-empty.
We may therefore define a function $g_t^*:V_t^*\to \RR^2 \times \Xx^2$ where
$$
g_t^*(v) \eqdef g(v), \quad\text{if} \; v\in V, \; V\in\Vv, \; \text{and} \; g \text{ is the corresponding function}.
$$

 The goal of the remainder of this subsection is to show that $V_t^*$ contains a ball of radius $t^4$.

\begin{lem}\label{lem:Ginv}
Let
$$
	G_{t\Z}(\lambda,w) \eqdef \Psi_{t\Z}^* \Psi_{t\Z} + t H_{t\Z}^{*,-1} F_{t\Z} J_{ta}^{-1} H_{t\Z}^{-1}
$$
where
$$
F_{t\Z} \eqdef -\begin{pmatrix}
0 & 0 & 0\\
0 & \diag(\partial_x^2 \Phi_{t\Z}^* q_{t\Z}) & \diag(\partial_y \partial_x\Phi_{t\Z}^* q_{t\Z})\\
0 & \diag(\partial_y\partial_x \Phi_{t\Z}^* q_{t\Z}) & \diag(\partial_y^2 \Phi_{t\Z}^* q_{t\Z})
\end{pmatrix}
$$
and
$$
q_{t\Z} \eqdef \lambda \Gamma_{t\Z}^{*,\dagger} \binom{1_N}{0_{2N}} + \Pi_{t\Z} w + \Pi_{t\Z} \Gamma_{t\Z_0} \binom{a_0}{0}.
$$
There exists $c_*>0$ such that for all $\Z\in \Xx^2$ and $\la \in \RR$ and $w\in \Hh$ with  $\norm{\Z-\Z_0}_\infty\leq c_* t$, $\lambda \leq c_* t^4$ and $\norm{w}\leq c_* t^4$,
$G_{t\Z}(\lambda,w)$ is invertible and has inverse bounded by $3\norm{(\Psi_{\Z_0}^* \Psi_{\Z_0})^{-1}}$.
Moreover, if $f_t(u,v)=0$, then $\partial_u f_t(u,v) = H_{t\Z}^* G_{t\Z}(\lambda,w) H_{t\Z} J_{ta}$.
\end{lem}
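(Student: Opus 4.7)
\textbf{Proof plan for Lemma~\ref{lem:Ginv}.}

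My first task is the identity $\partial_u f_t(u,v) = H_{t\Z}^* G_{t\Z}(\la,w) H_{t\Z} J_{ta}$ when $f_t(u,v)=0$. Start from the formula~\eqref{eq:f_t_d} for $\partial_u f_t$. The equation $f_t(u,v)=0$ reads $\Gamma_{t\Z}^* A = -\la \binom{1_N}{0_{2N}}$ with $A=\Phi_{t\Z}a-\Phi_{t\Z_0}a_0-w$, so decomposing $A=\Gamma_{t\Z}\Gamma_{t\Z}^\dagger A+\Pi_{t\Z}A$ and using $\Pi_{t\Z}\Phi_{t\Z}a=0$ I obtain $A=-\la\Gamma_{t\Z}^{*,\dagger}\binom{1_N}{0_{2N}}-\Pi_{t\Z}w-\Pi_{t\Z}\Gamma_{t\Z_0}\binom{a_0}{0}=-q_{t\Z}$. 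In particular $\Gamma_{t\Z}^* q_{t\Z}=\la\binom{1_N}{0_{2N}}$, so the first-derivative blocks $\partial_x\Phi_{t\Z}^* q_{t\Z}$ and $\partial_y\Phi_{t\Z}^* q_{t\Z}$ both vanish. Substituting $A=-q_{t\Z}$ into the perturbation matrix in~\eqref{eq:f_t_d} therefore annihilates its first block row and leaves precisely $tF_{t\Z}$, yielding $\partial_u f_t(u,v)=\Gamma_{t\Z}^*\Gamma_{t\Z}J_{ta}+tF_{t\Z}$. Finally, using $\Gamma_{t\Z}=\Psi_{t\Z}H_{t\Z}$ I factor out $H_{t\Z}^*$ on the left and $H_{t\Z}J_{ta}$ on the right (with a trivial $J_{ta}^{-1}H_{t\Z}^{-1}$ inserted in the second term), which is exactly $H_{t\Z}^* G_{t\Z}(\la,w) H_{t\Z}J_{ta}$.

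For the invertibility claim, my plan is a perturbative bound $G_{t\Z}(\la,w)=\Psi_{\Z_0}^*\Psi_{\Z_0}+E_t$ with $\|E_t\|\le \tfrac13\|(\Psi_{\Z_0}^*\Psi_{\Z_0})^{-1}\|^{-1}$ under the three constraints $\|\Z-\Z_0\|\le c_*t$, $\la\le c_*t^4$, $\|w\|\le c_*t^4$, then invoke a Neumann series. The ``geometric'' piece $\Psi_{t\Z}^*\Psi_{t\Z}-\Psi_{\Z_0}^*\Psi_{\Z_0}$ is $\Oo(t+\|\Z-\Z_0\|)=\Oo(t)$ by Lemma~\ref{prop:cvgence}, so it can be absorbed for small $t$. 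The real work is controlling the perturbation $t\,H_{t\Z}^{*,-1}F_{t\Z}J_{ta}^{-1}H_{t\Z}^{-1}$, and here the scaling bookkeeping is the main obstacle.

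To handle this I will split $q_{t\Z}=\la p_{V,t\Z}+\Pi_{t\Z}w+\Pi_{t\Z}\Gamma_{t\Z_0}\binom{a_0}{0}$ and bound each directional second derivative of $\Phi^*q_{t\Z}$ separately. For $\la p_{V,t\Z}$ I use Proposition~\ref{prop:bd2}, which gives $\partial_{d_\Z}^2\Phi_{t\Z}^* p_{V,t\Z}=\Oo(t^2)$, $\partial_{d_\Z^\perp}\partial_{d_\Z}\Phi_{t\Z}^* p_{V,t\Z}=\Oo(t)$ and $\partial_{d_\Z^\perp}^2\Phi_{t\Z}^* p_{V,t\Z}=\Oo(1)$; multiplied by $\la=\Oo(t^4)$ these are $\Oo(t^6),\Oo(t^5),\Oo(t^4)$ respectively. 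For $\Pi_{t\Z}w$ I use $\|\Pi_{t\Z}w\|\le\|w\|=\Oo(t^4)$ together with boundedness of second-derivative evaluation functionals. For $\Pi_{t\Z}\Gamma_{t\Z_0}\binom{a_0}{0}$, Proposition~\ref{prop:bd3} gives $\Oo(\max\{t^2\|\Z-\Z_0\|^2,t^3\|\Z-\Z_0\|\})=\Oo(t^4)$ under $\|\Z-\Z_0\|\le c_*t$. Writing $F_{t\Z}$ in the $(d_\Z,d_\Z^\perp)$ basis, I obtain component-wise bounds that match the anisotropic scaling of $H_{t\Z}^{*,-1}$, $H_{t\Z}^{-1}$ and $J_{ta}^{-1}$ given explicitly in Lemma~\ref{prop:cvgence}. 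A block-by-block multiplication, using that the ``large'' entries ($\Oo(t^{-3})$ in $H_{t\Z}^{-1}$, $\Oo(t^{-1})$ in $J_{ta}^{-1}$) always fall against the smallest available entries in $F_{t\Z}$ in the right directions, then shows that every entry of $t\,H_{t\Z}^{*,-1}F_{t\Z}J_{ta}^{-1}H_{t\Z}^{-1}$ is $\Oo(c_*)$, with constant independent of $t$. Choosing $c_*$ small enough that this and the geometric perturbation are each at most $\tfrac{1}{6}\|(\Psi_{\Z_0}^*\Psi_{\Z_0})^{-1}\|^{-1}$ guarantees invertibility and the stated bound $\|G_{t\Z}(\la,w)^{-1}\|\le 3\|(\Psi_{\Z_0}^*\Psi_{\Z_0})^{-1}\|$ via a standard Neumann series. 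The delicate point, and the one I would write out carefully, is the cancellation between the anisotropy of $F_{t\Z}$ (inherited from Proposition~\ref{prop:bd2}) and the anisotropy of $H_{t\Z}$ — naively the powers of $t$ do not balance, and only this directional matching makes the perturbation bounded.
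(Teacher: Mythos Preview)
Your derivation of the identity $\partial_u f_t(u,v)=H_{t\Z}^* G_{t\Z}(\lambda,w)H_{t\Z}J_{ta}$ is correct and essentially the paper's argument; you obtain $A=-q_{t\Z}$ via an orthogonal decomposition while the paper applies $\Gamma_{t\Z}(\Gamma_{t\Z}^*\Gamma_{t\Z})^{-1}$ to both sides of $f_t=0$, but these are equivalent.

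For the invertibility bound, your overall Neumann-series strategy is right, but you are misdiagnosing the cancellation mechanism, and this makes your plan more laborious than necessary. The anisotropic estimates from Proposition~\ref{prop:bd2} on $\partial^\alpha\Phi_{t\Z}^* p_{V,t\Z}$ cannot help here: the other two contributions to $q_{t\Z}$, namely $\Pi_{t\Z}w$ and $\Pi_{t\Z}\Gamma_{t\Z_0}\binom{a_0}{0}$, feed $\Oo(c_* t^4)$ into every second derivative isotropically (via $\norm{w}\le c_*t^4$ and Proposition~\ref{prop:bd3} with $\norm{\Z-\Z_0}\le c_*t$), so any finer decay of the $\lambda p_{V,t\Z}$ piece is swamped. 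All nonzero entries of $F_{t\Z}$ are simply $\Oo(c_* t^4)$, with no useful directional structure.

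What actually makes the powers of $t$ balance is the zero first block-row and block-column of $F_{t\Z}$, together with the scaling factorization
\[
H_{t\Z}^{*,-1}=\diag(1,t^{-1},t^{-1},t^{-2},t^{-2},t^{-3})\,H_\Z^{*,-1}\,\diag(1,1,t,t,t,t)
\]
from Lemma~\ref{prop:cvgence}. Because $F_{t\Z}$ vanishes on rows and columns $1,2$, the right factor $\diag(1,1,t,t,t,t)$ acting on $F_{t\Z}$ yields $tF_{t\Z}$; combining with the analogous structure of $J_{ta}^{-1}H_{t\Z}^{-1}$ the paper obtains
\[
t\,H_{t\Z}^{*,-1}F_{t\Z}J_{ta}^{-1}H_{t\Z}^{-1}
= t^2\,\diag(1,\ldots,t^{-3})\,H_\Z^{*,-1}F_{t\Z}J_a^{-1}H_\Z^{-1}\,\diag(1,\ldots,t^{-3}),
\]
whose norm is bounded by $t^{-4}\norm{H_\Z^{-1}}^2\norm{J_a^{-1}}\norm{F_{t\Z}}=\Oo(c_*)$ directly --- no block-by-block matching and no invocation of Proposition~\ref{prop:bd2}; the paper only uses that $\abs{\partial^\alpha\Phi_{t\Z}^*p_{V,t\Z}}$ is uniformly bounded. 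Your block-by-block computation would still succeed, since the $\Oo(t^{-3})$ entries of $H_{t\Z}^{*,-1}$ lie in columns $1,2$ and are annihilated by the zeros of $F_{t\Z}$, but you would discover along the way that the relevant ``smallest entries'' of $F_{t\Z}$ are these structural zeros, not the refined anisotropic bounds you planned to use.
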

\begin{proof}
First observe that since for $\abs{\alpha}=2$, $\partial^\alpha \Phi^* \Gamma_{t\Z}^{*,\dagger} \binom{1_N}{0_{2N}}$ converges uniformly to $\partial^\alpha \Phi^* p_{W,z}$ and $\norm{\Z-\Z_0}_\infty \leq c_0 t$,  $\abs{\partial^\alpha \Phi_{t\Z}^* \Gamma_{t\Z}^{*,\dagger} \binom{1_N}{0_{2N}} }$ is uniformly bounded.
Therefore, from Proposition~\ref{prop:bd3}, for $\abs{\alpha}=2$,
\begin{align*}
\abs{\partial^\alpha \Phi_{t\Z}^* q_{t\Z})}_\infty &\lesssim \lambda + \norm{w}  + t^2 \abs{\Z-\Z_0}_\infty^2 + t^3 \abs{\Z-\Z_0}_\infty.
\end{align*}
Therefore, $\norm{F_{t\Z}^{-1}}\leq C c_0 t^4$ for some constant $C$ which depends only on $\varphi$.

Recalling the definition of $ H_{t\Z}$ and $J_{ta}$, we have that 
\begin{align*}
\norm{tH_{t\Z}^{*,-1}F_{t\Z} J_{ta}^{-1} H_{t\Z}^{-1}}
&= \norm{t^2 \diag(1,\frac{1}{t},\frac{1}{t},\frac{1}{t^2},\frac{1}{t^2},\frac{1}{t^3}) H_{\Z}^{*,-1} F_{t\Z} J_a^{-1} H_{\Z}^{-1} \diag(1,\frac{1}{t},\frac{1}{t},\frac{1}{t^2},\frac{1}{t^2},\frac{1}{t^3})}
\\
&\leq \frac{\norm{H_{\Z}^{-1} }^2 \norm{J_{a}^{-1}} \norm{F_{t\Z}}}{t^4}
\leq C \norm{H_{\Z}^{-1} }^2 \norm{J_{a}^{-1}} c_0.
\end{align*}
Therefore, by the above bound and by Lemma~\ref{prop:cvgence}, we have that  $$\norm{G_{t\Z}- \Psi_{\Z_0}^* \Psi_{\Z_0} }\leq  C' \abs{\Z-\Z_0}_\infty + C \norm{H_{\Z}^{-1} }^2 \norm{J_{a}^{-1}} c_0,
$$ where $C'$ depends only on $\varphi$ 
and the required result follows by choosing $c_0$ to be sufficiently small.

From \eqref{eq:f_t_d} and since $\Gamma_{t\Z} = \Psi_{t\Z} H_{t\Z}$ by Lemma \ref{prop:cvgence}, if $f_t(u,v) = 0$, then
$$
\partial_u f_t(u,v) = H_{t\Z}^*\left( \Psi_{t\Z}\Psi_{t\Z} + t H_{t\Z}^{*,-1} g \begin{pmatrix}
0 & 0 & 0\\
0 & \diag(\partial_x^2 \Phi_{t\Z}^* A) & \diag(\partial_y \partial_x\Phi_{t\Z}^* A)\\
0 & \diag(\partial_y\partial_x \Phi_{t\Z}^* A) & \diag(\partial_y^2 \Phi_{t\Z}^* A)
\end{pmatrix} J_{ta}^{-1} H_{t\Z}^{-1} \right) H_{t\Z} J_{ta}
$$
where $A = \Phi_{t\Z} a - \Phi_{t\Z_0} a_0 - w$. Therefore, it is enough to show that $A = -q_{t\Z}$.

 Since $f_t(u,v) = 0$,
\begin{equation}\label{eq:app1}
\Gamma_{t\Z}^*(\Phi_{t\Z} a - \Phi_{t\Z_0} a_0 - w_n) + \la \binom{1_N}{0_{2N}} = 0.
\end{equation}
We can rewrite \eqref{eq:app1} as
\begin{equation}\label{eq:app2}
-\Gamma_{t_n \Z_n}^*\Gamma_{t_n \Z_n} \binom{a_n}{0_{2N}} =  - \Gamma_{t_n \Z_n}^*\Gamma_{t\Z_0} \binom{a_0}{0_{2N}} -\Gamma_{t_n \Z_n}^* w_n + \la_n\binom{1_N}{0_{2N}}.
\end{equation}
By applying $\Gamma_{t\Z}(\Gamma_{t\Z}^* \Gamma_{t\Z})^\dagger$ to both sides, we obtain
$$
- \Gamma_{t\Z} \binom{a}{0_{2N}} =  - \Gamma_{t\Z} \Gamma_{t\Z}^\dagger \Gamma_{t \Z_0} \binom{a_0}{0_{2N}} -\Gamma_{t\Z} \Gamma_{t\Z}^\dagger w + \la \Gamma_{t\Z}^{*,\dagger} \binom{1_N}{0_{2N}}.
$$
Therefore, 
\begin{align*}
A = 
\Gamma_{t\Z} \binom{a}{0_{2N}} - \Gamma_{t\Z_0} \binom{a_0}{0_{2N}} - w =-\left( \la \Gamma_{t\Z}^{*,\dagger} \binom{1_N}{0_{2N}} + \Pi_{t\Z} w + \Pi_{t\Z}\Gamma_{t\Z_0}\binom{a_0}{0_{2N}}\right) = -q_{t\Z},
\end{align*}
as required.

\end{proof}

\begin{cor}\label{cor}
Let $c_0\leq c_*$ where $c_*$ is as in Lemma \ref{lem:Ginv}.
Suppose that $\abs{\Z-\Z_0}_\infty\leq c_0 t$, $\lambda \leq c_0 t^4$ and $\norm{w}\leq c_0 t^4$.
Then, there exists a constant dependent only on $\varphi$, $a_0$, $\Z_0$ such that
$$
\norm{\partial_w g_t^*(v)} \leq  \frac{C}{t^3}.
$$
and 
 $$
\norm{\partial_\lambda g_t^*(v)} \leq C(c_0 t^{-3} + t^{-2}),
 $$
\end{cor}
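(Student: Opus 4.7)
The plan is to apply the implicit function theorem formula \eqref{eq:deriv-g},
\[
\mathrm{d} g_t^*(v) = -(\partial_u f_t(g_t^*(v),v))^{-1}\, \partial_v f_t(g_t^*(v),v),
\]
and then use the factorisation provided by Lemma~\ref{lem:Ginv}. Since every $v\in V_t^*$ satisfies $f_t(g_t^*(v),v)=0$, Lemma~\ref{lem:Ginv} gives $\partial_u f_t = H_{t\Z}^* G_{t\Z}(\la,w) H_{t\Z} J_{ta}$ (with $\Z$ the position part of $g_t^*(v)$), so
\[
(\partial_u f_t)^{-1} = J_{ta}^{-1} H_{t\Z}^{-1} G_{t\Z}^{-1} H_{t\Z}^{-*},
\]
and $\|G_{t\Z}^{-1}\|\le 3\|(\Psi_{\Z_0}^*\Psi_{\Z_0})^{-1}\|$. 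Combined with $\partial_v f_t = (\binom{1_N}{0_{2N}},-\Gamma_{t\Z}^*)$ from \eqref{eq:f_t_d} and the identity $H_{t\Z}^{-*}\Gamma_{t\Z}^*=\Psi_{t\Z}^*$ (a consequence of $\Gamma_{t\Z}=\Psi_{t\Z}H_{t\Z}$), the two bounds reduce to operator-norm estimates of explicit products.

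For $\partial_w g_t^*$, since $\Psi_{t\Z}^*$ and $G_{t\Z}^{-1}$ are $O(1)$, it suffices to bound $\|J_{ta}^{-1}H_{t\Z}^{-1}\|$. Plugging in the factorisation of $H_{t\Z}^{-1}$ from Lemma~\ref{prop:cvgence}, the $t$-scaling on the left of $H_\Z^{-1}$ is exactly cancelled by $J_{ta}^{-1}$, leaving
\[
J_{ta}^{-1} H_{t\Z}^{-1} \;=\; \diag\bigl(1,1,\tfrac1{a_1},\tfrac1{a_2},\tfrac1{a_1},\tfrac1{a_2}\bigr)\, H_\Z^{-1}\, \diag\bigl(1,\tfrac1{t},\tfrac1{t},\tfrac1{t^2},\tfrac1{t^2},\tfrac1{t^3}\bigr),
\]
whose operator norm is $O(t^{-3})$, as required.

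For $\partial_\lambda g_t^*$, a direct inspection of the entries of $H_{t\Z}^{*,-1}$ displayed immediately after Lemma~\ref{prop:cvgence} yields the key simplification $H_{t\Z}^{-*}\binom{1_N}{0_{2N}}=e_1$, hence $\partial_\lambda g_t^* = -J_{ta}^{-1}H_{t\Z}^{-1}G_{t\Z}^{-1}e_1$. To beat the naive $O(t^{-3})$ bound, decompose $G_{t\Z}=\Psi_{t\Z}^*\Psi_{t\Z}+E_{t\Z}$ where $\|E_{t\Z}\|\lesssim c_0$ (extracted from the proof of Lemma~\ref{lem:Ginv}) and write
\[
G_{t\Z}^{-1}e_1 = (\Psi_{t\Z}^*\Psi_{t\Z})^{-1}e_1 + r,\qquad \|r\|=O(c_0).
\]
The correction $r$ contributes $\|J_{ta}^{-1}H_{t\Z}^{-1}r\| = O(c_0 t^{-3})$. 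For the main term, $(\Psi_{t\Z}^*\Psi_{t\Z})^{-1}e_1$ is a fixed bounded vector (depending on $\Z$ but not on $(t,\la,w)$), and a row-by-row inspection of the explicit form of $J_{ta}^{-1}H_{t\Z}^{-1}$ above shows that the scaling $t^{-3}$ only arises from its last column, while the remaining entries produce $O(t^{-2})$; hence the action on this fixed vector is $O(t^{-2})$. Adding the two contributions gives $\|\partial_\lambda g_t^*\|\le C(c_0 t^{-3}+t^{-2})$.

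The main obstacle is precisely this refined bound on the leading term of $\partial_\lambda g_t^*$: the operator norm of $J_{ta}^{-1}H_{t\Z}^{-1}$ is $O(t^{-3})$, so reaching $O(t^{-2})$ requires using the specific direction $e_1$ and tracking which coordinates of $(\Psi_{t\Z}^*\Psi_{t\Z})^{-1}e_1$ pair with the rightmost $t^{-3}$ entry of $\diag(1,1/t,1/t,1/t^2,1/t^2,1/t^3)$. The estimate then follows by combining this coordinate-wise inspection with the uniform continuity of $(\Psi_\Z^*\Psi_\Z)^{-1}$ in $\Z$ at $\Z_0$ (guaranteed by Lemma~\ref{prop:cvgence}).
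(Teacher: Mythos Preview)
Your overall strategy matches the paper's: factor $\partial_u f_t$ via Lemma~\ref{lem:Ginv}, use $H_{t\Z}^{-*}\binom{1_N}{0_{2N}}=e_1$ (the paper writes $\delta_{3N}$), and pull the $t$-scaling out through $J_{ta}^{-1}H_{t\Z}^{-1}=J_a^{-1}H_\Z^{-1}\diag(1,t^{-1},t^{-1},t^{-2},t^{-2},t^{-3})$. The bound on $\partial_w g_t^*$ is fine and identical to the paper's.

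For $\partial_\lambda g_t^*$, however, there is a genuine gap. You correctly isolate the obstacle: the $t^{-3}$ weight hits only the sixth coordinate of $G_{t\Z}^{-1}e_1$, so the question is whether that sixth coordinate is $O(c_0)$ rather than $O(1)$. But you never explain why it is. Saying ``the action on this fixed vector is $O(t^{-2})$'' is a non-sequitur: a generic bounded vector with nonzero sixth entry would give $O(t^{-3})$. Invoking ``uniform continuity of $(\Psi_\Z^*\Psi_\Z)^{-1}$ in $\Z$ at $\Z_0$'' does not help unless you already know something about the sixth entry of $(\Psi_{\Z_0}^*\Psi_{\Z_0})^{-1}e_1$. (A minor side issue: $(\Psi_{t\Z}^*\Psi_{t\Z})^{-1}e_1$ does depend on $t$, since $\Psi_{t\Z}=\Psi_\Z+\Lambda_{t\Z}$; this is harmless once you expand around $\Z_0$, but as written it is inaccurate.)

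The missing ingredient, which the paper supplies, is a parity observation: reorder the columns of $\Psi_{\Z_0}$ as $\{\varphi_{0,0},\varphi_{0,1},\varphi_{0,2},\varphi_{1,0},\varphi_{1,1},\varphi_{0,3}\}$ so that even- and odd-degree derivatives alternate. Then $\Psi_{\Z_0}^*\Psi_{\Z_0}$ has a checkerboard zero pattern, and so does its inverse; in particular the $(6,1)$ entry of $(\Psi_{\Z_0}^*\Psi_{\Z_0})^{-1}$ vanishes. Writing $G_{t\Z}^{-1}=(\Psi_{\Z_0}^*\Psi_{\Z_0})^{-1}+O(c_0)$ (which absorbs both the $E_{t\Z}$ term and the $|\Z-\Z_0|\le c_0 t$ deviation) then gives that the sixth coordinate of $G_{t\Z}^{-1}e_1$ is $O(c_0)$, and the $O(c_0 t^{-3}+t^{-2})$ bound follows. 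Your proposal identifies where the cancellation must occur but does not supply the structural reason for it.
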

\begin{proof}

\begin{align*}
\mathrm{d}g_t^*(v) &= -J_{ta}^{-1} H_{t\Z}^{-1} G_{t\Z}(\lambda, w)^{-1} H_{t\Z}^{*-1} \begin{pmatrix}
\binom{1_N}{0_{2N}} & -H_{t\Z}^* \Psi_{t\Z}^*
\end{pmatrix}\\
&= J_a^{-1} H_{\Z}^{-1} \diag(1,t^{-1},t^{-1},t^{-2},t^{-2}, t^{-3}) G_{t\Z}(\lambda, w)^{-1} (\delta_{3N}, \Psi^*_{t\Z}).
\end{align*}
Therefore,
$$
\partial_w g_t^*(v) = \Oo(t^{-3}).
 $$
Recall from Lemma~\ref{lem:Ginv} that
$G_{t\Z}=\Psi_{\Z_0}^* \Psi_{\Z_0} + C' \abs{\Z-\Z_0}_\infty + C \norm{H_{\Z}^{-1} }^2 \norm{J_{a}^{-1}} c_0$.

Note that by ordering $\Psi_{\Z_0}$ as $\{\varphi_{0,0},\varphi_{0,1}, \varphi_{0,2}, \varphi_{1,0}, \varphi_{1,1}, \varphi_{0,3}\}$, $\Psi_{\Z_0}^* \Psi_{\Z_0}$ is a checkerboard matrix and its $(6,1)^{th}$ entry is zero. Therefore, $(\Psi_{\Z_0}^* \Psi_{\Z_0})^{-1}$ is also a checkerboard matrix with  zero as its $(6,1)^{th}$ entry. So, $G_{t\Z}^{-1} = (\Psi_{\Z_0}^* \Psi_{\Z_0})^{-1} + C'' c_0$, where $C''$ is a constant dependent only on $\varphi$, $a_0$ and $\Z_0$. 
So, $$
\norm{\partial_\lambda g_t^*(v)} \leq  C(c_0 t^{-3} + t^{-2}),
 $$
where $C$ is a constant dependent only on $\varphi$ and $(a_0, \Z_0)$.
\end{proof}

We are finally ready to show that $V_t^*$ contains a ball with radius on the order of $t^4$:
\begin{prop}\label{prop:V_t_radius}
There exists $C>0$ such that for all $t\in (0,t_0)$,
$$
V_t^* \supset B(0, C t^4).
$$
where $C\sim c_*$.
\end{prop}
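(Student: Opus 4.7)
The plan is a continuation argument along straight-line paths from $0$, in the spirit of the scheme employed in the 1-D case~\cite{2017-denoyelle-jafa}, combining the local implicit function theorem with the derivative bounds of Corollary~\ref{cor} to prevent the candidate parameters from leaving the admissible region $\Bb_{c_*}(a_0) \times \Bb_{tc_*}(\Z_0)$. Fix $v \in B(0, Ct^4)$ for a constant $C \leq c_*$ to be chosen, set $\gamma(s) = sv$ for $s \in [0,1]$, and introduce
$$
I \eqdef \enscond{s\in[0,1]}{\gamma([0,s]) \subset V_t^*}.
$$
The IFT applied to $f_t$ at $(u_0, 0)$---with invertibility of $\partial_u f_t(u_0,0)$ supplied by Lemma~\ref{lem:Ginv}---shows that $I$ contains a neighborhood of $0$. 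I plan to show $I = [0,1]$ by a standard connectedness argument, which then yields $v \in V_t^*$.

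Openness of $I$ follows from a second application of the IFT at $(g_t^*(\gamma(s_0)), \gamma(s_0))$ for any $s_0 \in I$, using the fact from Lemma~\ref{lem:Ginv} that $\partial_u f_t$ is invertible throughout $\Bb_{c_*}(a_0) \times \Bb_{tc_*}(\Z_0)$; local uniqueness from the IFT glues the extension to the existing $g_t^*$ to produce a new star-shaped neighborhood belonging to $\Vv$. For closedness, given $s_n \in I$ with $s_n \to s^*$, the bound $\|\mathrm{d}g_t^*\| = \Oo(t^{-3})$ from Corollary~\ref{cor} makes $(g_t^*(\gamma(s_n)))_n$ a Cauchy sequence whose limit $u^*$ solves $f_t(u^*, \gamma(s^*)) = 0$ by continuity. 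The crucial point is then to verify that $u^*$ still lies \emph{strictly} inside $\Bb_{c_*}(a_0) \times \Bb_{tc_*}(\Z_0)$, which follows from integrating the derivative bound along $\gamma$:
$$
\|g_t^*(\gamma(s_n)) - u_0\|_\infty \leq K_1\,t^{-3}\,\|v\|_\infty \leq K_1 C\, t,
$$
where $K_1$ depends only on $\varphi, a_0, \Z_0$. Choosing $C$ a sufficiently small multiple of $c_*$ (with the ratio $C/c_*$ dictated by $K_1$) guarantees that the amplitude deviation is strictly below $c_*/2$ and the position deviation strictly below $c_* t/2$, so a third IFT extension at $(u^*, \gamma(s^*))$ produces an element of $\Vv$ whose associated $g$ is defined past $\gamma(s^*)$, hence $s^* \in I$.

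The main obstacle is the self-referential nature of Corollary~\ref{cor}: its derivative estimate is valid only while $(a, \Z) \in \Bb_{c_*}(a_0) \times \Bb_{tc_*}(\Z_0)$, precisely what the continuation must maintain. Once $C$ is fixed so that the accumulated excursion $K_1 C\,t$ is strictly less than $c_*/2$ and $c_* t/2$ in the relevant coordinates, the loop closes and $B(0, Ct^4) \subset V_t^*$ with $C$ proportional to $c_*$ as claimed. Apart from this calibration, everything else is standard implicit-function bookkeeping.
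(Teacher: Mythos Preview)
Your proposal is correct and follows essentially the same approach as the paper: a continuation argument along rays from the origin, using the derivative bound $\norm{\mathrm{d}g_t^*}=\Oo(t^{-3})$ from Corollary~\ref{cor} to control the excursion of $g_t^*$ and conclude that $C$ can be taken proportional to $c_*$. The paper phrases this as a maximal-radius argument (define $R_v=\sup\{r:rv\in V_t^*\}$, observe that at $R_v$ the image must hit $\partial(\Bb_{c_*}(a_0)\times\Bb_{tc_*}(\Z_0))$, then bound $R_v$ from below via the integral inequality), whereas you phrase it as an open-and-closed connectedness argument on $I$; these are standard equivalent formulations and the substance is identical.
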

\begin{proof}
Let $v\in \RR\times \Hh$ be such that $\max(\lambda,\norm{w})=1$. Let
$$
R_v = \sup\enscond{r\geq 0}{rv\in V_t^*}.
$$
First note that $R_v\in (0,C t^4)$, and since $g_t^*$ is uniformly continuous on $V_t^*$, $g_t^*(R_v v) \eqdef \lim_{r\to R_v} g(rv)$ is well defined. Moreover, $f_t(g_t^*(R_v v) ,R_v v) = 0$.

By maximality of $V_t^*$, it is necessarily the case that $g_t^* (R_v v) \in \partial(\Bb_{c_0}(a_0)\times \Bb_{t c_*}(\Z_0))$ (otherwise, we can apply the implicit function theorem to construct a neighbourhood $V\in \Vv$ such that $V_t^* \subsetneq V$).

Suppose that $g_t^*(R_v v) \in \overline{ \Bb_{c_*}(a_0)} \times \partial(\Bb_{tc_*}(\Z_0))$. Then, for $(a,\Z) = g_t^*(R_v v)$,
$$
c_* t = \norm{\Z-\Z_0} \leq \int_0^1\abs{\mathrm{d}g_t^*(sR_v v) \cdot R_v v}_\infty \mathrm{d}s \leq \frac{M}{t^3}R_v\implies R_v \geq \frac{c_* t^4}{M}.
$$
On the other hand, if $g_t^*(R_v v) \in \partial(\Bb_{c_*}(a_0)) \times \overline{\Bb_{tc_*}(\Z_0)}$, then $R_v \geq \frac{c_* t^3}{M}$.
Repeating this for all $v\in \RR\times \Hh$ with unit norm yields the required result.

\end{proof}

\subsubsection{Use of Non-degeneracy}
\label{sec-proof-thm-use-nondegen}

Throughout this section, given $(a,\Z) = g_t(\lambda,w)$, recall the definition of $p_{\lambda,t}$ and $\eta_{\lambda,t}$ from \eqref{eq:candidate_pV}.

\begin{prop}\label{prop:implicit_dual_convergence}
Let $\epsilon >0$. Then, there exists $c_0>0$ and $t_0>0$ such that
for all $\Z,\lambda,w,t$ with $0<t< t_0$,  $\lambda \leq c_0 t^4$ and $\norm{w}\leq c_0 t^4$ and $\norm{w}/\lambda\leq c_0$,
we have that $$
\norm{p_{\lambda,t} -p_{W,\Z_0}} \leq \epsilon.
$$
\end{prop}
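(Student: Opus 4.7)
The plan is to use the explicit formula for the residual from Lemma~\ref{lem:Ginv} to write $p_{\lambda,t}$ as $p_{V,t\Z}$ plus two projected error terms, and then bound each piece with tools already in place. Concretely, setting $(a,\Z) = g_t^*(\lambda,w)$, the equation $f_t((a,\Z),(\lambda,w))=0$ is precisely the linear system treated in the proof of Lemma~\ref{lem:Ginv}, which identifies $\Phi_\Z a - \Phi_{t\Z_0} a_0 - w$ with $-q_{t\Z}$. Dividing through by $\lambda$ and recognising $\Gamma_{t\Z}^{*,\dagger}\binom{1_2}{0_4} = p_{V,t\Z}$ gives the target decomposition
$$
p_{\lambda,t} = p_{V,t\Z} + \frac{\Pi_{t\Z}w}{\lambda} + \frac{\Pi_{t\Z}\Gamma_{t\Z_0}\binom{a_0}{0}}{\lambda}.
$$

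The first two terms are easy. Since $V_t^*$ is constructed so that $\norm{\Z-\Z_0}_\infty \leq c_* t$, Lemma~\ref{prop:cvgence} gives $\norm{p_{V,t\Z}-p_{W,\Z_0}}_\Hh = \Oo(t)$; and since $\Pi_{t\Z}$ is an orthogonal projection, $\norm{\Pi_{t\Z}w/\lambda}_\Hh \leq \norm{w}/\lambda \leq c_0$. So both are at most $\epsilon/3$ after choosing $c_0$ and $t_0$ small.

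The main obstacle is the third term, because $\lambda$ may itself be as small as $c_0 t^4$. I would first apply Proposition~\ref{prop:bd3} to bound the numerator by $C\max\{t^2\norm{\Z-\Z_0}^2, t^3\norm{\Z-\Z_0}\}$; the constraint $\norm{\Z-\Z_0} \leq c_* t$ shows the first entry is dominated by $c_* t$ times the second, so the max is $\lesssim t^3\norm{\Z-\Z_0}$. To convert this into something small after dividing by $\lambda$, I would integrate the derivative bounds of Corollary~\ref{cor} along the segment from $0$ to $(\lambda,w)$, keeping the $\lambda$ and $w$ contributions separate, to obtain
$$
\norm{\Z-\Z_0}_\infty \lesssim (c_0 t^{-3} + t^{-2})\lambda + t^{-3}\norm{w}.
$$
Multiplying by $t^3$ and dividing by $\lambda$ gives $t^3\norm{\Z-\Z_0}/\lambda \lesssim c_0 + t + \norm{w}/\lambda$.

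This last inequality is precisely where the SNR hypothesis $\norm{w}/\lambda \leq c_0$ earns its keep: without it the ratio $\norm{w}/\lambda$ could be arbitrarily large, and the argument would fail whenever $\lambda \ll \norm{w}$. With it, the third term is $\Oo(c_0 + t)$, and the three bounds combine to give $\norm{p_{\lambda,t}-p_{W,\Z_0}}_\Hh \leq \epsilon$ provided $c_0$ and $t_0$ are chosen sufficiently small. I expect that the only really substantive step is this balancing of the Proposition~\ref{prop:bd3} estimate against the gradient bound for $g_t^*$; the rest is bookkeeping on top of Lemma~\ref{lem:Ginv}.
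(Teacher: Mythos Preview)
Your proposal is correct and follows essentially the same route as the paper: the same three-term decomposition $p_{\lambda,t}=p_{V,t\Z}+\Pi_{t\Z}w/\lambda+\Pi_{t\Z}\Gamma_{t\Z_0}\binom{a_0}{0}/\lambda$, the same use of Lemma~\ref{prop:cvgence} and the projection bound for the first two terms, and the same combination of Proposition~\ref{prop:bd3} with the integrated gradient bounds from Corollary~\ref{cor} for the third. Your simplification of the $\max$ via $\norm{\Z-\Z_0}\le c_*t$ is a slight streamlining over the paper, which carries both branches of the $\max$ through separately, but the argument is otherwise identical.
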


\begin{proof}
By Proposition \ref{prop:V_t_radius}, there exists $c$ such that for all $(\la,w) \in B(0,c_0 t^4)$, with $c_0\leq c$,  $g_t^*$ is well defined.  For $(a,\Z) = g_t^*(\lambda,w)$,
\begin{align*}
p_{\lambda,t} = \Gamma_{t\Z}^{*,\dagger} \binom{1_N}{0_{2N}} + \Pi_{t\Z} \frac{w}{\lambda} + \frac{1}{\lambda} \Pi_{t\Z} \Gamma_{t\Z_0}\binom{a_0}{0}
= p_{W,\Z_0} + \Oo(t) + \Oo(\frac{\norm{w}}{\lambda}) +  \frac{1}{\lambda} \Pi_{t\Z} \Gamma_{t\Z_0}\binom{a_0}{0}.
\end{align*}
To bound the last term on the RHS,
\begin{align*}
&\norm{ \frac{1}{\lambda} \Pi_{t\Z} \Gamma_{t\Z_0}\binom{a_0}{0}}
\leq \frac{C}{\lambda}\max\{t^2\abs{\Z-\Z_0}_\infty^2, t^3\abs{\Z-\Z_0}_\infty \}\\
&\leq \frac{C}{\lambda}\max\{\frac{\norm{w}^2}{t^4}, \norm{w}\} +  \frac{C}{\lambda}\max\{t^2(L^2 c_0^2 t^{-6} + t^{-4})\lambda^2, Lc_0\lambda \}\\
&\leq C\left(\frac{\norm{w}}{\lambda} + t^{-4} c_0^2 \lambda + c_0 \right),
\end{align*}
where the first inequality follows from Proposition~\ref{prop:bd3} and the second inequality follows from Corollary~\ref{cor}. The result now follows by choosing $c_0$ sufficiently small.
\end{proof}

\begin{proof}[Proof of Theorem~\ref{thm-twospikes}]
By Proposition~\ref{prop:implicit_dual_convergence}, if $(a,\Z) = g_t^*(\lambda,w)$, then since $p_{\la,t}$ can be made arbitrarily close to $p_{W,\Z_0}$, we can apply Proposition \ref{prop:degen_transfer} to conclude that $p_{\lambda,t}$ is a valid certificate and hence the (unique) solution to the dual problem of \eqref{eq-blasso}. Moreover, $\eta_{\lambda,t}$  attains the value 1 only at the points in $\Z$. Therefore, the support of any solution of \eqref{eq-blasso} is contained in $\Z$ and by invertibility of $\Phi_{\Z}^*\Phi_{\Z}$, it follows that $m_{a,\Z}$ is the unique solution of \eqref{eq-blasso}. Finally, the bounds on $\norm{(a,\Z)-(a_0,\Z_0)}$ is a direct consequence on the bounds on the differential $\mathrm{d}g_t^*$.
\end{proof}

\subsection{Limitations}

The key idea behind the stability result of Theorem~\ref{thm-twospikes} is Proposition~\ref{prop:degen_transfer}: any certificate which is sufficiently close to $\eta_{W,\Z_0}$ is also a valid certificate. We have only proved this result in the case of a pair of spikes, although a similar proof technique can be applied to the case where $\Z_0$ consists of $N$ aligned points in direction $d_{\Z_0}$, with the natural extension of the non-degeneracy condition (c.f. construction of $\eta_{W,\Z_0}$ from Example \ref{exp:further-etaW}) being:
$$
\begin{pmatrix}
\partial_{d_{\Z_0}^\perp}^2 \eta_{W,\Z_0}(0) &  \frac{1}{N!} \partial_{d_{\Z_0}^\perp} \partial_{d_{\Z_0}}^N \eta_{W,\Z_0}(0) \\
\frac{1}{N!}
\partial_{d_{\Z_0}^\perp} \partial_{d_{\Z_0}}^N \eta_{W,\Z_0}(0)  & \frac{2}{(2N)!}\partial_{d_{\Z_0}}^{2N} \eta_{W,\Z_0}(0) 
\end{pmatrix}\prec 0.
$$ 
However, Proposition~\ref{prop:degen_transfer} is in general not valid and therefore, the question of whether there is support stability in the case of more than 2 spikes remains open. The purpose of this section is to present some examples to illustrate this phenomenon. Note also that there exists examples (see the Gaussian mixture example from Section~\ref{sec-numerics}) where one can numerically observe support stability when recovering a pair of spikes, but not in the case of 3 or more spikes.

In the following examples, consider let $\Phi$ be a convolution operator, i.e.  $\varphi(x) = \tilde \varphi(x-\cdot)$.

\begin{prop}[Case $N=3$ ] Let $\Z = \{z_1,z_2,z_3\} \in \Xx^4$ be 3 points which are not colinear. Let  $x$ is any point in the interior of the convex hull of $\Z$. 
Let $$
p_t = \argmin\enscond{\norm{p}}{(\Phi^* p)(tv)=1, \nabla (\Phi^*p)(tv)=0, \; \forall v\in \Z, \; \Phi^*p(t x)=1}.
$$
Then, $\lim_{t\to 0}\norm{p_t - p_{W,z}} = 0$.

\end{prop}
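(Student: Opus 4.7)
The plan is to extend the asymptotic framework of Theorem \ref{thm-conv-etaw} to the augmented Hermite-plus-value interpolation problem, and then to exploit the convolution symmetry used in Proposition \ref{lem:odd_vanish} to show that the extra limiting constraint is automatically satisfied by $p_{W,\Z}$. Concretely, I would first rerun the proof of Theorem \ref{thm-conv-etaw} with the extra functional $F_x: P\mapsto P(x)$ adjoined to the Hermite functionals $\Ff$ at $\Z$. Using the reparametrisation $P_p(y) \eqdef (\Phi^*p)(ty)$ as in \eqref{Gammaz_Vz}, the ten constraints defining $p_t$ correspond to a linear system $\tilde\Gamma_{t\Z}^*p = \mathrm{diag}(1_3, t^{-1}1_6, 1)\tilde V_\Z P_p$ for the Hermite-plus-value interpolation matrix $\tilde V_\Z$ associated with $\tilde\Ff \eqdef \Ff\cup\{F_x\}$. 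Performing Gaussian elimination on $\tilde V_\Z$ (Procedure \ref{thm:deboor}) and running the Taylor expansion argument of Theorem \ref{thm-conv-etaw} verbatim yields $\norm{p_t - \tilde p_W}_\Hh = \Oo(t)$, where $\tilde p_W$ is the minimum-norm $p\in\Hh$ satisfying the ten limiting conditions $\tilde P_j(\partial)[\Phi^*p](0) = \delta_{j,0}$, with $\{\tilde P_j\}_{j=0}^{9}$ a de Boor basis of the augmented least interpolant space $\Ss_{\Z,x}\eqdef \Ll_{\tilde\Ff}$.

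Second, I would identify $\Ss_{\Z,x}$ explicitly. For any non-colinear triple $\Z$, the space $\Ss_\Z$ contains all of $\Pi_2^2$ (by Example \ref{exp:further-etaW} together with the affine covariance $\Ss_{A\Z+c} = \Ss_\Z\circ A^\top$), so every new basis element of $\Ss_{\Z,x}$ has degree at least $3$. The kernel of the nine Hermite functionals acting on $\Pi_3^2$ is one-dimensional and spanned by $P_0 = L_1 L_2 L_3$, where each $L_i$ is an affine form vanishing on the edge of $\mathrm{conv}(\Z)$ opposite to $z_i$: since every $z_j$ with $j\neq i$ lies on $\{L_i=0\}$, a direct computation shows that $P_0$ has vanishing value and gradient at each $z_i$. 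As $x$ lies in the interior of $\mathrm{conv}(\Z)$, it is off all three edges, so $P_0(x)\neq 0$ and the ten functionals $\tilde\Ff$ form a regular system on $\Pi_3^2$. The maximality property of the least interpolant then forces $\Ss_{\Z,x} = \Pi_3^2$, and Procedure \ref{thm:deboor} yields a de Boor basis $\Bb_\Z\cup\{\tilde P_9\}$ in which $\tilde P_9$ is necessarily homogeneous of degree $3$, by counting homogeneous components in $\Pi_3^2$ not already spanned by the three cubic elements of $\Bb_\Z$.

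Finally, I would establish $\tilde p_W = p_{W,\Z}$. For a convolution operator $\varphi(x)=\tilde\varphi(x-\cdot)$, the correlation kernel satisfies $\Corr(x,x') = \Corr(-x,-x')$, each element of $\Bb_\Z$ is either purely even or purely odd in total degree, and the right-hand side $\delta_{j,0}$ sits entirely in the even block. As in the proof of Proposition \ref{lem:odd_vanish}, the linear system for the coefficients of $p_{W,\Z}$ in the de Boor basis then decouples into even and odd blocks, only the even coefficients are non-zero, and hence $\eta_{W,\Z}$ is an even function. Since $\tilde P_9$ is homogeneous of odd degree $3$, odd-order partial derivatives of an even function vanish at the origin, giving $\tilde P_9(\partial)[\eta_{W,\Z}](0)=0$, so $p_{W,\Z}$ is feasible for the augmented minimum-norm problem. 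Because the augmented feasible set is contained in the unaugmented one, $\norm{\tilde p_W}_\Hh \geq \norm{p_{W,\Z}}_\Hh$, and uniqueness of the minimiser forces $\tilde p_W = p_{W,\Z}$, whence $\norm{p_t - p_{W,\Z}}_\Hh \to 0$. The main technical hurdle I anticipate is the identification $P_0 = L_1 L_2 L_3$ and the ensuing deduction that $\Ss_{\Z,x} = \Pi_3^2$ with the extra de Boor basis element homogeneous of odd degree; once this is in place, the even/odd decoupling argument is routine.
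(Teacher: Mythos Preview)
Your proposal is correct and follows essentially the same route as the paper: identify the augmented least interpolant space as $\Pi_3^2$, use the convergence result (the paper invokes Remark~\ref{rem:other-diff-forms}) to get $p_t\to\tilde p_W$, and then use the convolution symmetry (Proposition~\ref{lem:odd_vanish} and its Remark) to show that $p_{W,\Z}$ already satisfies the extra cubic constraint, forcing $\tilde p_W=p_{W,\Z}$. Your treatment is in fact more detailed than the paper's, which simply asserts that the augmented least interpolant space is $\Pi_3^2$ without justification; your $P_0=L_1L_2L_3$ argument supplies that missing step, and your observation that the de~Boor basis elements are homogeneous (so the extra element has odd total degree~$3$) makes explicit why the even/odd decoupling kills the new constraint.
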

\begin{proof}
The least interpolant space associated to Hermite interpolation at $\Z$ contains the polynomial space of degree 2,
$
\Pi_2^2.
$
Moreover, by Lemma~\ref{lem:odd_vanish}, since $\Phi$ is a convolution operator, $\nabla^k\eta_{W,\Z}(0) = 0$ for all odd integers $k$. Therefore, $\nabla^3 \eta_{W,\Z}(0)=0$ for $k=1,2,3$.
On the other hand, the least interpolant space associated to Hermite interpolation at $\Z$ plus Lagrange interpolation at  $x$ is  $\Pi_3^2$. Therefore, $p_{W,\Z} = p_{t}+\Oo(t)$.  
\end{proof}

\begin{prop}
Let $\tilde \varphi$ be the Gaussian kernel.
Let $\Z=\{(1,1), (-1,1), (1,-1), (-1,-1)\}$. Let $(u,v)\in \RR^2$ be such that $u^2+v^2=1$ and let $\tilde \Z = \{(u,v)\}\cup \Z$. 
$$
p_t = \argmin\enscond{\norm{p}}{(\Phi^* p)(tx)=1, \nabla (\Phi^*p)(tx)=0, \; \forall x\in \Z, \; \Phi^*p(t (u,v))=1}.
$$
Then, $p_{W,z} =\lim_{t\to 0} p_{t}$.
\end{prop}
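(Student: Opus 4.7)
The plan is to obtain the convergence $p_t \to p_{W,z}$ as a direct specialization of Theorem~\ref{thm-conv-etaw}, in its extension to mixed Hermite--Lagrange functional sets noted in Remark~\ref{rem:other-diff-forms}, applied to the augmented configuration $\tilde\Z$. Under this reading, $p_{W,z}$ is the limit pre-certificate $p_{W,\tilde\Z}$ associated with the least interpolant space $\Ll_{\tilde\Ff}$ of the augmented interpolation problem combining the $12$ Hermite conditions at $\Z$ with the single Lagrange condition at $(u,v)$.

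First I would introduce an augmented operator $\Gamma_{t\tilde\Z}\colon \RR^{13}\to \Hh$, analogous to~\eqref{eq:Gamma}, encoding the $4$ value conditions and $8$ gradient conditions at $t\Z$ together with the value condition at $t(u,v)$, so that $p_t = \Gamma_{t\tilde\Z}^{*,\dagger}(1_4, 0_8, 1)^\top$. Paralleling~\eqref{Gammaz_Vz}, after the rescaling $\diag(1_4, t^{-1}1_8, 1)$ the adjoint $\Gamma_{t\tilde\Z}^*$ can be identified with the Hermite--Lagrange interpolation matrix $V_{\tilde\Z}$ acting on the Taylor coefficients of $\Phi^*p$ at $0$. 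Gaussian elimination $V_{\tilde\Z} = LW$ following Procedure~\ref{thm:deboor} then produces a basis $\{h_i\}_{i=0}^{12}$ of $\Ll_{\tilde\Ff}$ from the pivots, and this $13$-dimensional basis extends the $4$-corner basis $\Bb_\Ff = \{1,x,y,x^2,y^2,xy,x^3,y^3,x^2y,xy^2,x^3y,xy^3\}$ recorded in Example~\ref{exp:further-etaW} by one additional polynomial whose least term lies in the $3$-dimensional complement $\Pi_4^2/\Ll_\Ff \cong \Span\{x^4, x^2y^2, y^4\}$.

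The hypothesis of Theorem~\ref{thm-conv-etaw} to verify is that $\{h(\partial)\varphi(0): h\in \Ll_{\tilde\Ff}\}$ has dimension $13$. Since $\Ll_{\tilde\Ff}\subset \Pi_4^2$ and the Gaussian kernel has all relevant Fourier coefficients non-vanishing, this follows from Proposition~\ref{prop:lin_indep}. With this assumption in hand, the proof of Theorem~\ref{thm-conv-etaw} applies verbatim in the mixed Hermite--Lagrange setting: factoring $WP(p) = \diag(t^{|\beta_i|})(\Psi^* p + \tilde\Psi_t^*p)$ with $\Psi^*$ built from the basis $\{h_i\}$ and $\|\tilde\Psi_t\| = \Oo(t)$, the invertibility of $\Psi$ gives $p_t = \Psi^{*,\dagger}\delta_{13} + \Oo(t) = p_{W,\tilde\Z} + \Oo(t)$, which is the claimed identity in the limit.

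The main technical obstacle is identifying the new basis element of $\Ll_{\tilde\Ff}$ beyond $\Bb_\Ff$ concretely. Starting from the expansion $e^{ux+vy} = \sum_k (ux+vy)^k/k!$, one chooses $h\in \exp_\Ff$ matching $e^{ux+vy}$ on all monomials of degree $\leq 3$ together with the $x^3y, xy^3$ components at degree $4$ --- the image of the truncation map $\exp_\Ff \to \Pi_4^2/\Span\{x^4,x^2y^2,y^4\}$ being surjective thanks to $\Ll_\Ff\supset \Pi_3^2\cup\{x^3y,xy^3\}$ --- so that the least term of $e^{ux+vy} - h$ lies in $\Span\{x^4, x^2y^2, y^4\}$ and defines the new basis element; a direct computation with the hyperbolic basis $\cosh(\cdot)\cosh(\cdot), \ldots, \sinh(\cdot)\sinh(\cdot)$ of $\exp_\Ff$ carries this out in closed form.
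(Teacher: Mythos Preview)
You have misread what the proposition asserts. In this section (``Limitations''), the symbol $p_{W,z}$ denotes the limiting pre-certificate $p_{W,\Z}$ associated with the \emph{four-corner} configuration $\Z$, not an augmented object $p_{W,\tilde\Z}$ built from thirteen constraints. The entire point of the proposition is that, for suitable $(u,v)$, adding the extra Lagrange condition at $t(u,v)$ does \emph{not} change the limit: $p_t$ still converges to the original $p_{W,\Z}$. This is what makes it a ``limitation'' result, since it shows that functions close to $\eta_{W,\Z}$ can saturate at the extra point $t(u,v)$ as well, so non-degeneracy does not transfer. Your reading reduces the proposition to an instance of Remark~\ref{rem:other-diff-forms}, which holds for \emph{any} $(u,v)$ and any kernel with enough independence; in particular, you never use the hypothesis on $u^2+v^2$ nor the specific Gaussian structure, which should have been a warning sign.

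What is missing is precisely the step the paper's proof carries out: having identified the thirteenth basis element $p(x,y)$ of $\Ll_{\tilde\Ff}$ lying in $\Span\{x^4,x^2y^2,y^4\}$, one must check that $p(\partial)\eta_{W,\Z}(0)=0$. The paper does this by invoking the closed-form Gaussian formula (Proposition~\ref{prop:gaussian}) to evaluate $\partial_x^4\eta_{W,\Z}(0)$, $\partial_y^4\eta_{W,\Z}(0)$, $\partial_x^2\partial_y^2\eta_{W,\Z}(0)$, and then observing that the resulting algebraic condition on $(u,v)$ is exactly the one in the hypothesis. Once this vanishing is established, $p_{W,\Z}$ satisfies all thirteen constraints and hence coincides with $p_{W,\tilde\Z}$; only then does your convergence $p_t\to p_{W,\tilde\Z}$ yield the stated conclusion. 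Your final paragraph gestures at computing the new basis element but stops short of using it for anything.
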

\begin{proof}
First note that the least interpolant space associated with Hermite interpolation at $\Z$ is spanned by the following basis:
\begin{equation}\label{basis_4spikes}
\Bb_\Z =
\enscond{X^\alpha}{\abs{\alpha}\leq 3} \cup \enscond{X^\beta}{\beta\in \{(1,3), (3,1) \}}.
\end{equation}
Let $\tilde \Z = \{(1,1), (-1,1), (1,-1)\}$. Then, we have that $\nabla^j\eta_{W,\tilde \Z}(0) = 0$ for  $j=1,2,3$ and $\partial_x^3\partial_y\eta_{W,\tilde \Z}(0) = \partial_y^3\partial_x\eta_{W,\tilde \Z}(0) = 0$.
 Therefore, $\eta_{W,\tilde \Z} = \eta_{W,\Z}$.

 Observe now that the de Boor basis associated with Hermite interpolation on $\Z$ and Lagrange interpolation  on $(u,v)$ is 
$$
\Bb_{\Z} \cup \left\{p(x,y) =  y^4 + 6 x^2 y^2 \left(\frac{u^2-1}{v^2-1}\right) + x^4 \left(\frac{u^2-1}{v^2-1}\right)^2\right\}
$$
 Moreover, by the explicit formula given in Proposition~\ref{prop:gaussian}, we have that $\partial^4_y\eta_{W,\Z}(0) = \partial^4_x\eta_{W,\Z}(0)=-48$ and $\partial^2_y\partial_x^2 \eta_{W,\Z}(0) = -16$. Therefore,
$
p(\partial_x,\partial_y)\eta_{W,Z}(0) = 0
$
whenever
$$
-48 - 96 \left( \frac{u^2-1}{v^2-1}\right) - 48  \left( \frac{u^2-1}{v^2-1}\right)^2 = 0.
$$
i.e. $u^2 + v^2 = 2$. So, provided that $u^2 + v^2 = 2$, then $p_{W,z} = \lim_{t\to 0}p_t$.

\end{proof}


\section{Numerical Study}
\label{sec-numerics}

\subsection{Considered Setups}

We consider three different imaging operators $\Phi$, intended to be representative of three different setups routinely encountered in imaging or machine learning. For each setup, in order to perform the computations of $\eta_{V,\Z}$, $\eta_{W,\Z}$ and to implement the Frank-Wolfe algorithm detailed in Section~\ref{sec-fw}, the only requirement is to be able to evaluate the correlation kernel $\Corr$ defined in~\eqref{eq-etaV-corr} and its derivatives. 

In these examples, we consider the clustering of the spikes positions at a fixed point $z_0 \in \Xx$, i.e. consider for $t>0$ the positions $\Z_t = (z_0+t(z_i-z_0))_{i=1}^N \in \Xx^N$.
For the purpose of simplifying notation, the previous sections detailed only  the case  of $z_0 = 0$, i.e. $\Z_t = t\Z$, however, all previous results also hold in this more general setting by a change of variable $x \in \Xx \rightarrow x-z_0 \in \Xx$. Note that if $\Xx$ is not translation invariant, one should restrict the translation around $z_0$ and extend it into a smooth diffeomorphism on $\Xx$, see Appendix~\ref{sec-reparam} for a proof of the reparametrization invariance of $\eta_{V,Z}$.  

\begin{itemize}
	\item \textit{Gaussian convolution:} this corresponds to a translation invariant setup, which is typical in the modelling of acquisition blur in image processing. We consider  $\phi(x)=e^{-\frac{\norm{x-\cdot}^2}{2\si^2}} \in \Hh=L^2(\RR^2)$ on $\Xx=\RR^2$, and one has
		\eql{\label{eq-corr-gauss}
			\Corr(x,x') = e^{-\frac{\norm{x-x'}^2}{4\si^2}}.
		}
		In this case, the clustering point is set to be $z_0=0$. 
	
	\item \textit{Gaussian mixture estimation:} 
		In machine learning, an important problem is to estimate the parameters $(z_i)_{i=1}^N \in \Xx^N$ of a mixture $\sum_{i=1}^N a_i \phi(z_i)$ of $N$ elementary distributions parameterized by $\phi$ from samples or moments observations, see~\cite{gribonval2017compressive} for an overview of this problem. This problem can be recast as a super-resolution problem, where one seeks to recover the measure $m_0 = \sum_i a_i \de_{z_i}$ from observations of the form~\eqref{eq-fwd-model} where the noise $w$ accounts for the sampling scheme (in a real-life machine learning setup, the operator $\Phi$ itself is noisy to account for the sampling scheme). 
	We consider here a classical instance of this setup, where one looks for a mixture of 1-D Gaussians, parameterized by mean $m \in \RR$ and standard deviation $s \in \RR_+^*$, i.e. $x=(m,s) \in \Xx = \RR \times \RR_+^{*}$, so that $\phi(x) = \frac{1}{s} e^{ -\frac{(\cdot-m)^2}{2s^2} } \in \Hh=L^2(\RR)$ and the correlation operator reads
	\eql{\label{eq-gmixture}
		\Corr((m,s),(m',s')) =  \frac{1}{\sqrt{s^2+s'^2}} e^{ -\frac{(m-m')^2}{2(s^2+s'^2)} }.
	}
	In this case, the clustering point is set to be $z_0=(m_0,s_0) = (0,2)$. 
			
	\item \textit{Neuro-imaging:} for medical and neuroscience imaging applications, a standard goal is to estimate pointwise sources inside some domain $\Xx \subset \RR^d$ (where $d=2$ or $3$) from measurements on the boundary $\partial \Xx$. The operator is thus of the form $\phi(x) = ( \psi(x,u) )_{u \in \partial \Xx} \in \Hh = L^2(\partial \Xx)$ (equipped with the uniform measure on the boundary) where the kernel $\psi(x,u)$ corresponds to the impulse response of the measurement operator.
	To model MEG or EEG acquisition~\cite{gramfort2013time}, we consider a singular kernel $\psi(x,u)=\norm{x-u}^{-2}$ which accounts for the decay of the electric or magnetic field in a stationary regime. We consider a disk domain $\Xx=\enscond{x \in \RR^2}{\norm{x} < 1}$ which could model a slice of a head. The correlation function associated to this problem is
	\eql{\label{eq-cor-neuro}
		\Corr(x,x') = 2\pi \frac{ 1-\norm{x}^2 \norm{x'}^2 
			}{
				(1-\norm{x}^2)(1-\norm{x'}^2)( (1 - \dotp{x}{x'})^2 + |x \wedge x'|^2 )
			}, 
	}
	see Appendix~\ref{sec-proof-cor-neuro} for a proof.
	In this case, the clustering point is set to be $z_0=(0.4, 0.3) \in \Xx$. 
\end{itemize}

As it is customary for sparse regularization, we perform the BLASSO recovery using an $L^2$ normalized operator, i.e. perform the replacement
\eq{
	\phi(x) \leftarrow \frac{\phi(x)}{\norm{\phi(x)}_\Hh}
	\quad\Longrightarrow\quad
	\Corr(x,x') \leftarrow \frac{\Corr(x,x')}{ \sqrt{\Corr(x,x)\Corr(x',x')} }.
}
Note that for translation invariant operators (i.e. convolutions), the kernels are already normalized.

\newcommand{\MyFigEtaW}[2]{\includegraphics[width=.23\linewidth]{etaw/#1/#1-etaw-N#2}}
\newcommand{\MyFigGMixt}[1]{\includegraphics[width=.23\linewidth]{etaw/gmixture2/gmixture2-N2-nor1-#1}}

\begin{figure}
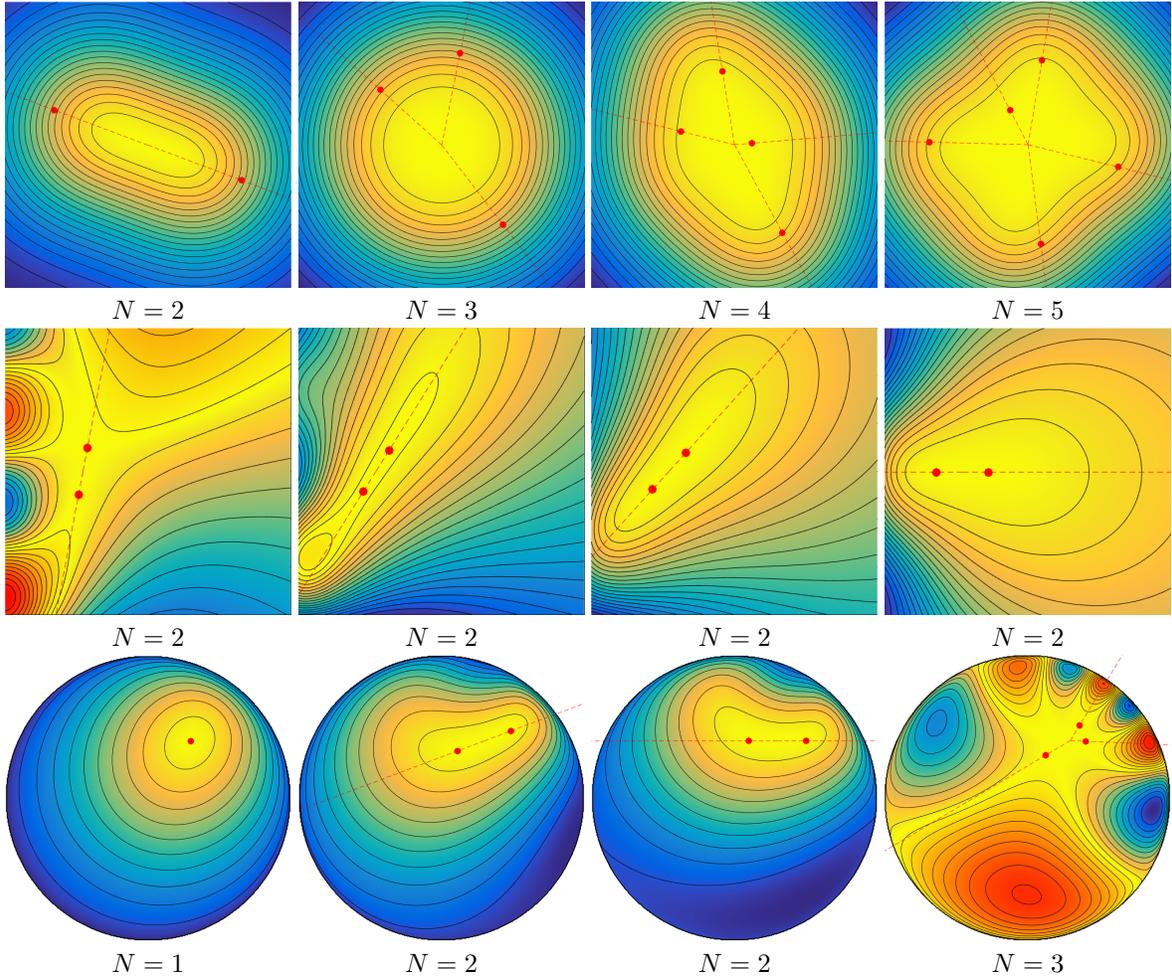

\centering
\begin{tabular}{@{}c@{\hspace{1mm}}c@{\hspace{1mm}}c@{\hspace{1mm}}c@{}}
\MyFigEtaW{gaussian2d}{2}&
\MyFigEtaW{gaussian2d}{3}&
\MyFigEtaW{gaussian2d}{4}&
\MyFigEtaW{gaussian2d}{5}\\
$N=2$ & $N=3$ & $N=4$ & $N=5$ \\
\MyFigGMixt{2} &
\MyFigGMixt{4} &
\MyFigGMixt{5} &
\MyFigGMixt{10} \\
$N=2$ & $N=2$ & $N=2$ & $N=2$\\
\MyFigEtaW{neuro-like-disc}{1}&
\MyFigEtaW{neuro-like-disc}{2}&
\MyFigEtaW{neuro-like-disc}{2bis}&
\MyFigEtaW{neuro-like-disc}{3}\\
$N=1$ & $N=2$ & $N=2$ & $N=3$ 
\end{tabular}
\caption{\label{fig-etaw}
Display of the evolution of $\eta_{W,\Z}$ for the three different operators $\Phi$.  The dashed red line shows the directions $(\Z_t)_{t>0}$ along which the spikes are converging. Red color indicates regions where $\eta_{W,\Z}(x) >1$, i.e. it is degenerated. 
Top: Gaussian convolution~\eqref{eq-corr-gauss}. 
Middle: Gaussian mixture~\eqref{eq-gmixture}, here the horizontal axis is the standard deviation $s \in [0.5,6]$ and the vertical axis is the mean $m \in [-3,3]$.
Bottom: neuro-imaging like~\eqref{eq-cor-neuro}.
}
\end{figure}

\subsection{Asymptotic Certificate $\eta_{W,\Z}$}
\label{sec-numerics-etaW}

Figure~\ref{fig-etaw} explores the behaviour of $\eta_{W,\Z}$ in the three considered cases:
\begin{itemize}
	\item \textit{Gaussian convolution~\eqref{eq-corr-gauss}:} we found numerically that $\eta_{W,\Z}$ is always non-degenerate, for any $N$ and spikes configuration $\Z$. This is inline with the theoretical results of Section~\ref{sec-gaussian-closedform}. This implies that one can hope (and provably do so for $N=2$ according to Theorem~\ref{thm-twospikes}) to achieve super-resolution  for Gaussian deconvolution (provided, of course, that the signal-to-noise ratio is large enough). 

	\item \textit{Neuro-imaging~\eqref{eq-cor-neuro}:} we observed numerically that $\eta_{W,\Z}$ is always non-degenerate for $N=2$ and more generally for aligned spikes. In contrast, for three non-aligned spikes, $\eta_{W,\Z}$ is not a valid certificate ($\norm{\eta_{W,\Z}}_\infty >1$) which means that in the presence of noise, one cannot stably super-resolve 3 close spikes.
	
	\item \textit{Gaussian mixture estimation~\eqref{eq-gmixture}:} here, the situation is more complicated, and for $N=2$ spikes, $\eta_{W,\Z}$ is non degenerate if $|m_2-m_1| \leq |s_2-s_1|$. This means that one can super-resolve with BLASSO a mixture of two Gaussians provided that the variation in the means is not too large with respect to the variation in standard deviations.
	Note also that in the special 1-D case where either the means or the standard deviation are equal and known (which leads to a 1-D super resolution problem along the $m$ or $s$ axis) then the resulting 1-D $\eta_W$ is non-degenerate.  It is the interplay between means and standard deviation that makes the super-resolution possibly problematic.
\end{itemize}

An important aspect to consider, which explains partly the above observations, is that, as explained in Section~\ref{sec-convolution-vanish}, convolution operators tend to have much better behaved $\eta_{W,\Z}$ than arbitrary operators (such as the  neuro-imaging and the Gaussian mixture), because their odd derivatives always vanish. In contrast, the vanishing of odd derivatives for a generic operator only occur for particular values of $N$ and spikes configuration (e.g. aligned spikes). Without having its odd derivatives vanishing, $\eta_{W,\Z}$ cannot be expected to be smaller than $1$ near the spikes position $(z_i)_i$.

\newcommand{\MyFigFW}[1]{\includegraphics[width=.21\linewidth]{fw/#1}}
\begin{figure}
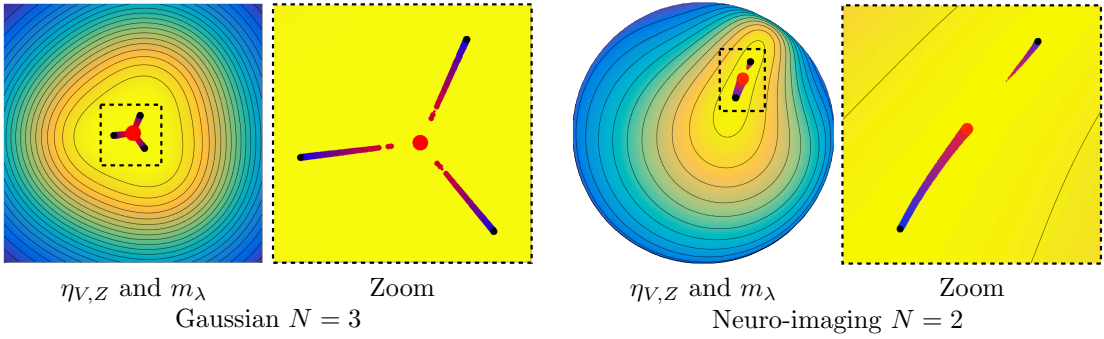

\centering
\begin{tabular}{c}
\begin{tabular}{@{}c@{\hspace{1mm}}c@{}}
\MyFigFW{gaussian2d-N3} &
\MyFigFW{gaussian2d-N3-zoom} \\
$\eta_{V,\Z}$ and $m_\la$ & Zoom
\end{tabular}\\
Gaussian $N=3$
\end{tabular}
\begin{tabular}{c}
\begin{tabular}{@{}c@{\hspace{1mm}}c@{}}
\MyFigFW{neuro-like-disc-N2} &
\MyFigFW{neuro-like-disc-N2-zoom} \\
$\eta_{V,\Z}$ and $m_\la$ & Zoom
\end{tabular}\\
Neuro-imaging $N=2$
\end{tabular}
\caption{\label{fig-frank-wolfe}
Display of the evolution of the solution $m_\la$ of~\eqref{eq-blasso} (computed using Frank-Wolfe algorithm) as a function of $\la$ for two different operators $\Phi$, in cases where $\eta_{V,\Z}$ is non-degenerated.  
The settings are the same as for Figure~\ref{fig-etaw}, and the bottom row is a zoom in the dashed rectangular region indicated on the top row. 
A spike $a \de_{x}$ of $m_\la$ is indicated with a disk centered at $x$ of radius proportional to $a$, and the color ranges between blue for $\la=0$ and red for $\la=\la_{\max}$. 
The background color image shows $\eta_{V,\Z}$ where $z$ are the spikes possitions of $m_0$ (plotted in black).
}
\end{figure}

\subsection{Spikes Recovery with Frank-Wolfe}
\label{sec-fw}

In order to solve numerically the BLASSO problem~\eqref{eq-blasso}, we follow~\cite{bredies-inverse2013,boyd2017alternating} and use the Frank-Wolfe algorithm (also known as conditional gradient) with improved non-convex updates. 
The algorithm starts with the initial zero measure $m^{(0)}=0$, and alternates between a ``matching pursuit'' step which generates a new spike location 
\eql{\label{eq-fw-step-1}
	\tilde x \eqdef \uargmax{x \in \Xx} | \eta^{(\ell)}(x) | \qwhereq
	\eta^{(\ell)}(x) \eqdef \frac{1}{\la} \dotp{\phi(x)}{ y-\Phi m^{(\ell)} }_{\Hh}, 
}
with associated amplitude $\tilde a \eqdef \la \eta^{(\ell)}(\tilde x_{\ell+1})$, and
a local non-convex minimization step, initialized with 
$r \leftarrow (x^{(\ell)}_1,\ldots,x^{(\ell)}_\ell,\tilde x) \in \Xx^{\ell+1}$ and 
$b \leftarrow (a^{(\ell)}_1,\ldots,a^{(\ell)}_\ell,\tilde a) \in \RR^{\ell+1}$
\eql{\label{eq-fw-step-2}
	(x^{(\ell+1)},a^{(\ell+1)}) \eqdef \uargmin{(r,b) \in \Xx^{\ell+1} \times \RR^{\ell+1}} \frac{1}{2\la} \norm{ y - \sum_{i=1}^{\ell+1} b_i \phi(r_i) }^2 + \norm{ b }_1. 
}
After each iteration, the measure is updated as 
\eq{
	m^{(\ell+1)} \eqdef \sum_{i=1}^{\ell+1}  a^{(\ell+1)}_i \de_{ x^{(\ell+1)}_i }. 
}
The termination criterion is $|\eta^{(\ell)}(\tilde x)| \leq 1$, which means that $m^{(\ell)}$ is a solution to~\eqref{eq-blasso} because $\eta^{(\ell)}$ is a valid dual certificate of optimality for $m^{(\ell)}$. 
The algorithm is known to converge in the sense of the weak topology of measures to a solution of~\eqref{eq-blasso}, see~\cite{bredies-inverse2013}. Without the non-convex update, convergence is slow (the rate on is only $O(1/\ell)$ on the BLASSO functional being minimized~\cite{jaggi2013revisiting}). However, as we illustrate next,  empirical observations suggest that by applying the non-convex update~\eqref{eq-fw-step-2}, convergence is often reached in a finite number of iteration.

Numerically, the low-dimensional optimization problems~\eqref{eq-fw-step-1} and~\eqref{eq-fw-step-2} are solved using a quasi-Newton (L-BFGS) solver.
Computing the gradient of the involved functionals only require the evaluation of the correlation operator $\Corr$ and its derivative, assuming the measure $m^{(\ell)}$ are stored using a list of (positions, amplitudes).

Figure~\ref{fig-frank-wolfe} explores the behaviour of the solution $m_{\la}$ of~\eqref{eq-blasso} as $(\la,w) \rightarrow 0$, in cases where $\eta_{V,\Z}$ is non-degenerate, so that support is stable in this low-noise regime. 
Inline with support stability theorems, we scale the noise linearly with $\la$, $y=\Phi m_0 + \la w$, and set the noise $w$ to be of the form $w=\Phi \bar m$ where $\bar m$ is a random measure $\sum_{j} b_j \de_{u_j}$ of $Q=20$ random points $(u_j)_{j=1}^Q \in \Xx^Q$ where $(b_j)_j$ is white noise with standard deviation $10^{-3}$.
Numerically, we found that in these cases where $\eta_{V,\Z}$ is non-degenerate, Frank-Wolfe with non-convex update converges in a finite number of steps. 
The color (from blue to red) allows to track the evolution with $\la$ of the solution, which highlight the smoothness of the solution path.

Figure~\ref{fig-frank-wolfe-fixed} shows, in contrast, cases where $\eta_{V,\Z}$ is degenerate. According to Section~\ref{sec-necessity-nondegen}, in this case, the support of the solution $m_\la$ is not stable for small $\la$, and one expects this solution to be composed of more than $N$ diracs. Numerically, in these case, Frank-Wolfe does not converge in a finite number of steps, and it keeps creating new spikes of very small amplitudes. The figure shows how these additional spikes are added to force $|\eta^{(\ell)}|$ to be smaller, while $\eta_{V,\Z}$ is not. 

\newcommand{\MyFigFWF}[1]{\includegraphics[width=.21\linewidth]{fw-fixed/#1}}

\begin{figure}
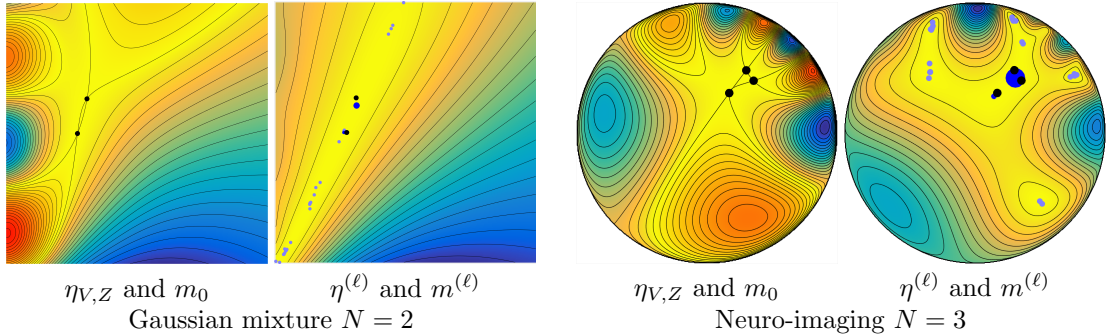

\centering
\begin{tabular}{c}
\begin{tabular}{@{}c@{\hspace{1mm}}c@{}}
\MyFigFWF{gmixture2-N2-etav} &
\MyFigFWF{gmixture2-N2-fixed} \\
$\eta_{V,\Z}$ and $m_0$ & $\eta^{(\ell)}$ and $m^{(\ell)}$
\end{tabular}\\
Gaussian mixture $N=2$
\end{tabular}
\begin{tabular}{c}
\begin{tabular}{@{}c@{\hspace{1mm}}c@{}}
\MyFigFWF{neuro-like-disc-N3-etav} &
\MyFigFWF{neuro-like-disc-N3-fixed} \\
$\eta_{V,\Z}$ and $m_0$ & $\eta^{(\ell)}$ and $m^{(\ell)}$
\end{tabular}\\
Neuro-imaging $N=3$
\end{tabular}
\caption{\label{fig-frank-wolfe-fixed}
Display of the solution $m^{(\ell)}$ computed using $\ell=40$ Frank-Wolfe iterations, in cases where $\eta_{V,\Z}$ is degenerated (as indicated by red regions).  
The settings are the same as for Figure~\ref{fig-etaw}.
The light blue dots indicate the support of $m^{(\ell)}$ (which thus allows one to locate spikes with very small amplitude) while blue dots are displayed with a size propositional to the amplitude of the corresponding spike.
}
\end{figure}


\section*{Aknowlegements}

We would like to thank Vincent Beck stimulating discussions about polynomial interpolation.

\section{Conclusion}

This article presented a study of the multivariate BLASSO problem in the case when recovering positive spikes positioned very close together. In particular, we focussed on the question of support stability. Previous studies \cite{duval2015exact,2017-denoyelle-jafa} have highlighted the importance of the precertificate for this question, and as a first contribution, we presented a procedure for computing the limit $\eta_{W,\Z}$ of the associated precertificates as the point sources converge towards a limit point. Since a necessary condition for support stability is that this certificate is valid (it is uniformly bounded by 1), one can quickly check whether this certificate is valid before proceeding with more detailed analysis. Our second main contribution is a detailed analysis in the case of recovering a superposition of 2 spikes. Here, we showed that under a nondegeneracy condition on $\eta_{W,\Z}$, support stability can be achieved provided that the norm of the additive noise $\norm{w}$ and the regularization parameter $\la$ decays like $t^4$, where $t$ is the spacing between the 2 spikes. The question of which conditions are necessary for support stability when recovering more than 2 spikes remains open. The final part of this paper presented numerical examples related to 3 different imaging situations, it is perhaps interesting to observe that breakdown of support stability in the Gaussian mixture case and the neuro-imaging case, and this is potentially an interesting area for further investigation. 

\appendix

\section{Proof of Proposition~\ref{prop:lin_indep} (Linear Independence)}
\label{proof:prop:lin_indep}

\textbf{Step I.} Let us first show that $\Psi_L\eqdef  \enscond{\partial^\alpha  \varphi(0)}{\abs{\alpha}\leq L, \; \alpha\in\NN_0^2}$ is linearly independent provided that $\hat \psi(\alpha)\neq 0$ for all $\al\in\NN_0^2$ with $\abs{\al}\leq L$.

Observe that $\partial^\alpha \varphi(0) = \partial^\alpha \psi$, and that
 $\Psi_L = \enscond{\partial^\alpha  \psi}{\abs{\alpha}\leq L, \; \alpha\in\NN_0^2}$ is linearly independent if and only if the Fourier coefficients of the elements in $\Psi_L$ are linearly independent. 
The Fourier Transform of $\partial^\alpha \tilde \varphi$ evaluated at frequencies $\xi\eqdef (\xi_j)_{j=1}^M$ are
$\left((\hat{ \psi}(\xi_j) (2\pi i \xi_j)^\alpha\right)_{j=1}^M$. Therefore, $\Psi_L$ is linearly independent if the columns of the matrix $\diag((\hat \psi(\xi_j)_{1\leq j\leq N})  M_\xi$ are linearly independent, where $M_\xi \eqdef ((2\pi i \xi_j)^\alpha)_{\substack{\abs{\al}\leq L\\ 1\leq j\leq M}}$ is the Lagrange interpolation matrix, with evaluation at points $\xi$ and using the polynomial basis $(X^\alpha)_{\abs{\al}\leq L}$. From \cite[Theorem 1]{lorentz2000multivariate}, we know that $M_\xi$ is invertible for almost every choice of $\xi$ where $M=\mathrm{\Pi^2_L}$. Furthermore, one possible choice of $\xi$ is
$$
\enscond{\alpha\in\NN_0^2}{\abs{\alpha}\leq L}. 
$$
Therefore, to ensure linear independence of $\Psi_L$, it is enough to check that $\hat \psi(\alpha) \neq 0$ for all $\alpha\in\NN_0^2$ such that $\abs{\al}\leq L$.

\textbf{Step II.} 
We are now ready to show that $\enscond{h(\partial)\varphi(0)}{h\in\Ss_z}$ is of dimension $3N$.

Recall from Remark \ref{rem:V_z_restr} that $\Ss_z$ associated with Hermite interpolation at $N$ points $\Z$ satisfies $\Ss_z\subset \Pi_{L}^2$, where $L\eqdef 2N-1$.
 Let $B$ be the coefficient matrix such that $B(X^\alpha)_{\abs{\al}\leq L} = (g_i(X))_{1\leq i\leq 3N}$. Note since $\Bb$ is a basis, given any $a\in\RR^{3N}$, $B^T a=0$ if and only if $a=0$, since otherwise, there would be an $a\neq 0$ such that
$$
0 = \dotp{a}{B(X^\alpha)_{\abs{\al}\leq L}}
$$
which would contradict the assumption that $\Bb$ is a basis. Therefore, if $\enscond{g_i(\partial)\varphi(0)}{i=1,\ldots, 3N}$ is linearly dependent, then there exists $0\neq a\in\RR^{3N}$ such that
$$
0=\dotp{a}{B(\partial^\alpha \varphi(0))_{\abs{\al}\leq L}} = \dotp{B^T a }{(\partial^\alpha \varphi(0))_{\abs{\al}\leq L}}.
$$
This leads to the required contradiction, we have shown in the first step that $\Psi_L$ is linearly independent, and therefore, $B^T a=0$ and hence $a=0$.

\section{Proof of Theorem~\ref{thm-etaw-lowwpass}}
\label{sec-proof-thm-etaw-lowwpass}




\subsection{Part 1: $\eta_V(x)<1$ for all $x\not\in \Z$}

Recall that $\eta_V$ is of the form
\begin{align*}
\eta_V(x) &= \sum_{j=1}^N \alpha_j \dotp{\varphi(z_j)}{\varphi(x)} + \sum_{j=1}^N \beta_j \dotp{\varphi'(z_j)}{\varphi(x)} \\
&=\sum_{j=1}^N \alpha_j \dotp{v(z_j)}{v(x)} + \sum_{j=1}^N \beta_j \dotp{v'(z_j)}{v(x)},
\end{align*}
where  $v:\TT\to \CC^{2f_c+1}$ is defined by $v(x) \eqdef (e^{2\pi i k x})_{\abs{k}\leq f_c}$. 
If there exists $\tau \notin \Z$ such that $\eta_V(\tau) = 1$, $L^* R$ is a singular matrix, where
$$
R\eqdef \begin{pmatrix}
e^{2\pi i f z_1}-e^{2\pi i f \tau} & \cdots & e^{2\pi i f z_N}-e^{2\pi i f \tau} &  (2\pi i f) e^{2\pi i f z_1}& \cdots & (2\pi i f) e^{2\pi i f z_N}\\
\vdots\\
e^{2\pi i z_1}-e^{2\pi i \tau} & \cdots & e^{2\pi i  z_N}-e^{2\pi i f \tau} &  (2\pi i ) e^{2\pi i z_1}& \cdots & (2\pi i ) e^{2\pi i  z_N}\\
e^{-2\pi i z_1}-e^{-2\pi i \tau} & \cdots & e^{-2\pi i  z_N}-e^{-2\pi i f \tau} &  (-2\pi i ) e^{-2\pi i z_1}& \cdots & (-2\pi i ) e^{-2\pi i  z_N}\\
\vdots\\
e^{-2\pi i f z_1}-e^{-2\pi i f \tau} & \cdots & e^{-2\pi i f z_N}-e^{-2\pi i f \tau} &  (-2\pi i f) e^{2\pi i f z_1}& \cdots & (-2\pi i f) e^{2\pi i f z_N}
\end{pmatrix}
$$ 
and
$$
L\eqdef
\begin{pmatrix}
e^{2\pi i f z_1} & \cdots & e^{2\pi i f z_N} &  (2\pi i f) e^{2\pi i f z_1}\cdots (2\pi i f) e^{2\pi i f z_N}\\
\vdots\\
e^{2\pi i z_1}  & \cdots & e^{2\pi i  z_N} &  (2\pi i ) e^{2\pi i  z_1}\cdots (2\pi i ) e^{2\pi i  z_N}\\
e^{-2\pi i z_1}  & \cdots & e^{-2\pi i  z_N} &  (-2\pi i ) e^{-2\pi i  z_1}\cdots (-2\pi i ) e^{-2\pi i  z_N}\\
\vdots\\
e^{-2\pi i f z_1}  & \cdots & e^{-2\pi i f z_N} &  (-2\pi i f) e^{2\pi i f z_1}\cdots (-2\pi i f) e^{2\pi i f z_N}
\end{pmatrix}.
$$ 
To show that this is impossible, first observe that the matrix $L$ has the same determinant as the following $(2N+1)\times (2N+1)$ matrix:
$$
\begin{pmatrix}
0& e^{2\pi i f z_1} & \cdots & e^{2\pi i f z_N} &  (2\pi i f) e^{2\pi i f z_1} &\cdots& (2\pi i f) e^{2\pi i f z_N}\\
\vdots\\
0& e^{2\pi i z_1}  & \cdots & e^{2\pi i  z_N} &  (2\pi i ) e^{2\pi i  z_1} &\cdots& (2\pi i ) e^{2\pi i  z_N}\\
1 & 1 & \cdots & 1 &0 &\cdots &0\\
0& e^{-2\pi i z_1}  & \cdots & e^{-2\pi i  z_N} &  (-2\pi i ) e^{-2\pi i  z_1} &\cdots& (-2\pi i ) e^{-2\pi i  z_N}\\
\vdots\\
0& e^{-2\pi i f z_1}  & \cdots & e^{-2\pi i f z_N} &  (-2\pi i f) e^{2\pi i f z_1} &\cdots & (-2\pi i f) e^{2\pi i f z_N}
\end{pmatrix}.
$$ 
So, if $\det(L) = 0$, then there exists $\alpha\neq 0$ such that
$$
F(x) = 1+ \sum_{j=1}^f \alpha_j x^j + \sum_{j=-f}^{-1}\alpha_j x^j
$$
has roots at $e^{2\pi i z_l}$ for $l=1,\ldots, N$ and at $0$. Moreover, $F'(e^{2\pi i z_l}) = 0$ for all $l=1,\ldots, N$. However, this would imply that $x^f F(x)$ has $2N+1$ roots, which is a contradiction to the fact that this is a polynomial of degree $2f = 2N$.
Therefore, $\det(L) \neq 0$.

So, to prove this theorem, it suffices to show that $R$ is nonsingular. 
The determinant of $R$ is equal to that of the following $(2N+1)\times (2N+1)$ matrix:
$$
\begin{pmatrix}
e^{2\pi i f \tau} & e^{2\pi i f z_1}  & \cdots & e^{2\pi i f z_N}  &  (2\pi i f) e^{2\pi i f z_1}&\cdots & (2\pi i f) e^{2\pi i f z_N}\\
\vdots\\
e^{2\pi i  \tau} & e^{2\pi i z_1}  & \cdots & e^{2\pi i  z_N}  &  (2\pi i ) e^{2\pi i z_1} & \cdots & (2\pi i ) e^{2\pi i  x_N}\\
1 & 1& \cdots & 1 & 0 &\cdots &0\\
e^{-2\pi i \tau} & e^{-2\pi i z_1} & \cdots & e^{-2\pi i  z_N}  &  (-2\pi i ) e^{-2\pi i z_1}& \cdots & (-2\pi i ) e^{-2\pi i  z_N}\\
\vdots\\
e^{-2\pi i f \tau} & e^{-2\pi i f z_1}  & \cdots & e^{-2\pi i f z_N}  &  (-2\pi i f) e^{-2\pi i f z_1}& \cdots & (-2\pi i f) e^{-2\pi i f z_N}
\end{pmatrix}.
$$ 
We must have $\det(R)\neq 0$ because otherwise, by the same argument as before, we would construct a polynomial of degree $2N$ with at least $2N+1$ roots (at least double roots at $e^{2\pi i z_l}$ for $l=1,\ldots, N$ and a single root at $e^{2\pi i \tau}$).

So, if $f_c=N$, then $\eta_V(x)\neq 1$ for all $x\notin \Z$. Therefore, since $\eta_V(z_j) = 1$, either $\eta_V(x)\geq 1$ for all $x$ or $\eta_V(x)\leq 1$ for all $x$. Note that both $\eta_V$ and $2-\eta_V$ satisfy the vanishing derivatives constraints. Suppose that $\eta_V(x)\geq 1$ for all $x$. Then $\norm{p_V}_2 = \norm{\eta_V}_2 > \norm{2-\eta_V}_2 = \norm{q_V}_2$ where $\Phi^* q_V  = 2-\eta_V$. This yields a contradiction. Therefore, $\eta_V(x)\leq 1$ for all $x$ and  is the minimal norm certificate.

\subsection{Step 2: closed form expression of $\eta_W$}

First note that there exists $a\in\RR^{2N}$ such that 
$$
\eta_W(x) = \sum_{j=1}^{2N} a_j \dotp{\partial^{j}
\varphi(0)}{\varphi(x)} = \sum_{j=1}^{2N} a_j \dotp{\partial^{j}
v(0)}{v(x)}.$$ Suppose that $\eta_W(\tau) = 1$ for some $\tau \neq 0$. Then, the equations $\eta_W(0)-\eta_W(\tau)=0$, $\partial^j \eta_W(0) = 0$ for $j=1,\ldots, 2N-1$ can be written as the linear system $M_\tau a = 0$, where
$$
M_\tau \eqdef 
\begin{pmatrix}
\dotp{v(0)}{v(0)-v(\tau)} & \dotp{v^{(1)}(0)}{v(0)-v(\tau)} &\ldots & \dotp{v^{(2N-1)}(0)}{v(0)-v(\tau)} \\
\dotp{v(0)}{v^{(1)}(0)} & \dotp{v^{(1)}(0)}{v^{(1)}(0)} &\ldots & \dotp{v^{(2N-1)}(0)}{v^{(1)}(0)} \\
\dotp{v(0)}{v^{(2)}(0)} & \dotp{v^{(1)}(0)}{v^{(2)}(0)} &\ldots & \dotp{v^{(2N-1)}(0)}{v^{(2)}(0)} \\
\vdots&\vdots&\cdots& \vdots\\
\dotp{v(0)}{v^{(2N-1)}(0)} & \dotp{v^{(1)}(0)}{v^{(2N-1)}(0)} &\ldots & \dotp{v^{(2N-1)}(0)}{v^{(2N-1)}(0)} \\
\end{pmatrix}.
$$
We will now proceed to show that $\det(M_\tau)\neq 0$ for all $\tau\neq 0$, and therefore, $\eta_W(\tau)<1$ for all $\tau\neq 0$.
Note that $M_\tau^* = L^* R$ where
\begin{align*}
R &\eqdef \begin{pmatrix}
v(0)-v(\tau), & v^{(1)}(0), & v^{(2)}(0),& \cdots &v^{(2N-1)}(0)
\end{pmatrix},\\
L &\eqdef \begin{pmatrix}
v(0), & v^{(1)}(0), & v^{(2)}(0),& \cdots &v^{(2N-1)}(0)
\end{pmatrix}.
\end{align*}
Let $n=2f_c$.
Since the row corresponding to frequency $k=0$ for the matrix $R$ is zero, we can write $M_\tau^* = \tilde L^* \tilde R$, where
\begin{align*}
\tilde R \eqdef \begin{pmatrix}
1- e^{2\pi i f_c\tau } & 2\pi i f_c & (2\pi i f_c)^2 & \cdots & (2\pi i f_c)^{n-1}\\
1- e^{2\pi i (f_c-1)\tau} & 2\pi i (f_c-1) & (2\pi i (f_c-1))^2 & \cdots & (2\pi i (f_c-1))^{n-1}\\
\vdots& \vdots&&&\vdots\\
1- e^{2\pi i \tau} & 2\pi i  & (2\pi i )^2 & \cdots & (2\pi i )^{n-1}\\
1- e^{-2\pi i \tau} & -2\pi i  & (-2\pi i )^2 & \cdots & (-2\pi i )^{n-1}\\
\vdots& \vdots&&&\vdots\\
1- e^{-2\pi i f_c\tau } & -2\pi i f_c & (-2\pi i f_c)^2 & \cdots & (-2\pi i f_c)^{n-1}\\
\end{pmatrix} \in \CC^{n\times n},
\end{align*}
and
\begin{align*}
\tilde L \eqdef \begin{pmatrix}
1  & 2\pi i f_c & (2\pi i f_c)^2 & \cdots & (2\pi i f_c)^{n-1}\\
1 & 2\pi i (f_c-1) & (2\pi i (f_c-1))^2 & \cdots & (2\pi i (f_c-1))^{n-1}\\
\vdots& \vdots&&&\vdots\\
1 & 2\pi i  & (2\pi i )^2 & \cdots & (2\pi i )^{n-1}\\
1 & -2\pi i  & (-2\pi i )^2 & \cdots & (-2\pi i )^{n-1}\\
\vdots& \vdots&&&\vdots\\
1  & -2\pi i f_c & (-2\pi i f_c)^2 & \cdots & (-2\pi i f_c)^{n-1}\\
\end{pmatrix} \in \CC^{n\times n}.
\end{align*}
Since $\tilde L$ is a Vandermonde matrix generated by $n$ distinct points $\{(2\pi i k)\}_{\abs{k}\leq f_c, k\neq 0}$, $$
\det(\tilde L) =  \prod_{\substack{l>j\\ l,j\in \{-f_c,\ldots,f_c\}\setminus\{0\}}} (2\pi i(l-j)) = (2\pi i)^{2f_c^2-f_c} \prod_{\substack{l>j\\ l,j\in \{-f_c,\ldots,f_c\}\setminus\{0\}}} (l-j) \neq 0.
$$
 So, it remains to show that $\det(\tilde R)\neq 0$ for all $\tau\neq 0$.
\begin{align*}
\det(\tilde R) &= \sum_{k=1}^{f_c} (-1)^{k+f_c} (1-e^{2\pi i k \tau}) \prod_{\substack{l>j\\ l,j\in \{-f_c,\ldots,f_c\}\setminus\{0,k\}}} (2\pi i(l-j)) \prod_{l=-f_c, l\neq 0,k}^{f_c} (2\pi i l)\\
&+\sum_{k=-f_c}^{-1} (-1)^{k+f_c+1} (1-e^{2\pi i k \tau}) \prod_{\substack{l>j\\ l,j\in \{-f_c,\ldots,f_c\}\setminus\{0,k\}}} (2\pi i(l-j)) \prod_{l=-f_c, l\neq 0,k}^{f_c} (2\pi i l)\\
&= -\det(\tilde L)\left( f_c!\right)^2
\left( 
\sum_{k=1}^{f_c} \frac{ (-1)^{k} (2-e^{2\pi i k \tau} - e^{-2\pi i k \tau})}{(f+k)! (f-k)!}
\right)
\end{align*}
Let $x = e^{-2\pi i \tau}$, then
\begin{align*}
 F(x) \eqdef &(2f_c)!\sum_{k=1}^{f_c} \frac{ (-1)^{k} (2-e^{2\pi i k \tau} - e^{-2\pi i k \tau})}{(f+k)! (f-k)!}
=  \sum_{k=1}^{f_c} \binom{2f_c}{f_c-k} (-1)^{k} (2- x^{k} - x^{-k}).
\end{align*}
Observe that
\begin{align*}
&   \sum_{k=1}^{f_c} (-1)^k \binom{2f_c}{f_c-k} (x^{+k}+ x^{-k}) = (-1)^{f_c} \frac{(1-x)^{2f_c}}{x^{f_c}} - \binom{2f_c}{f_c},
\end{align*}
and
$
2\sum_{k=1}^{f_c} (-1)^k \binom{2f_c}{f_c-k}  = -\binom{2f_c}{f_c}.
$
Therefore, $$F(x) = \frac{(-1)^{f_c+1} (1-x)^{2f_c}}{  x^{f_c}} = -2^{2f_c}\sin^{2f_c}(\pi \tau).$$  So, $\det(\tilde R)=0$ if and only if $\tau = 0$. In particular, $$\det(\tilde M) = \det(\tilde R)\det(\tilde L^*) = \frac{2^{2f_c}\abs{\det(\tilde L)}^2 }{\binom{2f_c}{f_c}} \sin^{2f_c}(\pi\tau)>0$$ for all $\tau\neq 0$.

Finally, for the explicit formula of $\eta_W$, note that  $$\det(P_\tau) = \det(P_0) - \det(M_\tau) \in \Span\enscond{\dotp{\partial^j\varphi(0)}{\varphi(x)}}{ j=0,\ldots, 2N-1},$$
where $P_\tau$ is the cross-Grammian matrix between the vectors $\{v^{(j)}(0)\}_{j=0}^{2N-1}$ and $\{v(\tau)\}\cup \{v^{(j)}(0)\}_{j=1}^{2N-1}$.  Moreover, $\det(P_0)>0$ since $P_0$ is positive definite. Therefore, since
the function
\begin{equation}\label{eq:g}
g(\tau)\eqdef  1 - C \sin^{2f_c}(\pi\tau), \quad C\eqdef \frac{2^{4f_c^2} \pi ^{(4f_c^2-2f_c)}}{\binom{2f_c}{f_c} \det(P_0)}\cdot  \prod_{\substack{l>j\\ l,j\in \{-f_c,\ldots,f_c\}\setminus\{0\}}} (l-j)^2,
\end{equation}
satisfies $g(0) = 1$ and $\partial^j g(0) = 0$ for $j=1,\ldots, 2N-1$, we have that $\eta_W = g$.

\section{Reparameterization Invariance}
\label{sec-reparam}

In the following, given a Borel map $T: \Xx \to \Yy$, and a Borel measure $\mu$ defined on $\Xx$, $T_\sharp \mu$ is the pushforward measure of $\mu$, so that for all integrable $f\in L^1(\Yy)$,
$$
\int_{\Yy} f(x) \mathrm{d}(T_\sharp \mu)(x) = \int_{\Xx} f(T(x)) \mathrm{d}\mu(x).
$$
\begin{prop}\label{prop:reparam}
Let $T: \Xx \to \Yy$ be a bijection such that the  Jacobian of $T^{-1}$ is invertible. Consider the following minimization problems:
\begin{equation}\label{eq:orig}
\min_m \frac{1}{2} \norm{y-\Phi m}_{\Hh}^2 + \lambda\abs{m}(\Xx).
\end{equation}
\begin{equation}\label{eq:reparm}
\min_m \frac{1}{2} \norm{y- (\Phi\circ T_\sharp^{-1}) m}_{\Hh}^2 + \lambda\abs{m}(\Yy).
\end{equation}
If $\mu$ solve \eqref{eq:orig}, then $\nu \eqdef T_\sharp \mu$ solves \eqref{eq:reparm}.
Let $\eta_D^{\Phi,y}$ and $\eta_D^{\Phi\circ T_\sharp^{-1},y}$ be  the dual certificates and let  $\eta_V^{\Phi,y}$ and $\eta_V^{\Phi\circ T_\sharp^{-1},y}$ be the precertificates associated to \eqref{eq:orig} and \eqref{eq:reparm} respectively. Then, $\eta_D^{\Phi\circ T_\sharp^{-1},y} = \eta_D^{\Phi,y}\circ T^{-1}$ and  $\eta_V^{\Phi\circ T_\sharp^{-1},y} = \eta_V^{\Phi,y}\circ T^{-1}$.

\end{prop}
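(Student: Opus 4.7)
The plan is to exhibit $T_\sharp$ as a bijection from $\Mm(\Xx)$ to $\Mm(\Yy)$ that preserves both terms of the BLASSO functional, after which the identities between certificates follow by transposition. The first step is to verify that the pushforward by a Borel bijection is an isometry for the total variation norm, i.e.\ $|T_\sharp \mu|(\Yy) = |\mu|(\Xx)$. I would do this via the Hahn--Jordan decomposition: writing $\mu = \mu_+ - \mu_-$, injectivity of $T$ implies that $T_\sharp\mu_+$ and $T_\sharp\mu_-$ remain mutually singular, so they form the Jordan decomposition of $T_\sharp\mu$, and equality of total masses yields the claim. Smoothness of $T$ and $T^{-1}$, guaranteed by the hypothesis on the Jacobian of $T^{-1}$, ensures that both maps are Borel, so $T_\sharp$ is well-defined with inverse $(T^{-1})_\sharp$.

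Combining this with the tautological identity $(\Phi \circ T_\sharp^{-1})(T_\sharp \mu) = \Phi \mu$ shows that $\mu \mapsto T_\sharp \mu$ sends the objective of~\eqref{eq:orig} at $\mu$ to that of~\eqref{eq:reparm} at $T_\sharp\mu$, and the inverse map gives the converse correspondence, so minimizers transform as claimed. For the certificates, the key calculation is the adjoint: for $p \in \Hh$ and $\nu \in \Mm(\Yy)$, a change of variables gives
\eq{
\dotp{p}{(\Phi \circ T_\sharp^{-1})\nu}_\Hh = \int_\Yy (\Phi^* p)(T^{-1}(y))\,\d\nu(y),
}
so $(\Phi \circ T_\sharp^{-1})^* p = (\Phi^* p)\circ T^{-1}$. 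The constraints defining the dual certificate and the vanishing pre-certificate are intertwined by the involution $\eta \mapsto \eta \circ T^{-1}$: the sup-norm constraint is preserved since $T$ is a bijection; the interpolation conditions at $z_i$ become conditions at $T(z_i)$; and the vanishing-gradient conditions carry over via the chain rule $\nabla(\eta\circ T^{-1})(T(z_i)) = (DT^{-1}(T(z_i)))^\top \nabla\eta(z_i)$, invertibility of $DT^{-1}$ ensuring equivalence. Consequently the feasible sets for $p \in \Hh$ in the two minimization problems defining $p_D$ and $p_V$ are literally the same, the objective $\norm{p}_\Hh$ is unchanged, and the optimizers coincide. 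Applying the respective adjoint operators to this common $p$ yields the announced identities.

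The only substantive point is the total-variation isometry under pushforward by a Borel bijection; after that, the argument reduces to a routine change-of-variable computation and one application of the chain rule.
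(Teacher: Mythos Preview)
Your proposal is correct and follows essentially the same route as the paper: compute the adjoint $(\Phi\circ T_\sharp^{-1})^*p = (\Phi^*p)\circ T^{-1}$, and for $\eta_V$ use the chain rule together with invertibility of the Jacobian of $T^{-1}$ to identify the constraint sets. You are in fact more complete than the paper, which silently skips the first assertion (that minimizers correspond under $T_\sharp$); your total-variation isometry argument via the Hahn--Jordan decomposition fills this gap. One minor discrepancy: the paper takes $\eta_D$ to be the certificate obtained from the primal residual, $\eta_D = \tfrac{1}{\lambda}\Phi^*(\Phi m - y)$, rather than via a separate constrained minimization over $p$, so the $\eta_D$ identity follows immediately from the adjoint formula once one knows $\Phi\mu = (\Phi\circ T_\sharp^{-1})\nu$; your treatment of $\eta_D$ as another minimum-norm problem is harmless but not what is intended here.
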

\begin{proof}
The dual certificate of \eqref{eq:reparm} is 
$$\eta_D^{\Phi\circ T_\sharp^{-1},y} = \frac{1}{\lambda}(\Phi\circ T_\sharp^{-1})^*(\Phi\circ T_\sharp^{-1} \nu -y)
=\frac{1}{\lambda}(\Phi\circ T_\sharp^{-1})^*(\Phi \mu -y) = \eta_D^{\Phi,y}\circ T^{-1}.$$

For the precertificates, suppose that $y = \Phi m_{a,\Z} = (\Phi\circ T_\sharp^{-1})m_{a,T\Z}$. Then,
\begin{align*}
p_V^{\Phi\circ T_\sharp^{-1},y} &= \argmin\enscond{\norm{p}}{ [(\Phi\circ T_\sharp^{-1})^* p](Tz_j) = 1, \; [\nabla((\Phi\circ T_\sharp^{-1})^* p)](T z_j) = 0}\\
&= \argmin\enscond{\norm{p}}{ [\Phi^* p](z_j) = 1, \; [\nabla(\Phi^* p)](z_j) = 0} = p_V^{\Phi, y}
\end{align*}
where we have used the fact that, by letting $J_{T^{-1}}$ denote the Jacobian of $T^{-1}$,
$$
0 = [\nabla((\Phi\circ T_\sharp^{-1})^* p)](Tz_j) = 
\nabla[\Phi^* p(T^{-1}\cdot)](Tz_j)
= [J_{T^{-1}}(Tz_j)] \nabla(\Phi^* p)(z_j)
$$
implies that $\nabla(\Phi^* p)(z_j)$ since the Jacobian of $T^{-1}$ is invertible.
Therefore,
$$
\eta_V^{\Phi\circ T_\sharp^{-1}} = (\Phi\circ T_\sharp^{-1})^* p_V^{\Phi\circ T_\sharp^{-1},y} = (\Phi\circ T_\sharp^{-1})^* p_V^{\Phi, y} = \eta_V^{\Phi,y} \circ T^{-1}.
$$

\end{proof}
\begin{rem}
Let $T$ be as in Proposition \ref{prop:reparam}, let $m_{a,\Z}=\sum_j a_j \delta_{z_j}$ and $y = \Phi m_{a,\Z}$. 
Let $\tilde \Phi \eqdef (\Phi\circ T_\sharp^{-1})$ and $\tilde y \eqdef \tilde \Phi m_{a,T\Z}$. Then, $\tilde y= \Phi m_{a,\Z}$. So, \eqref{eq:reparm} can be rewritten as 
$$
\min_m \frac{1}{2} \norm{\tilde y - \tilde \Phi m}^2 + \lambda\abs{m}(\Yy).
$$
Therefore, to check that the dual certificate of this problem $\eta_{V}^{\tilde \Phi, \tilde y}$  is nondegenerate, it is enough to show that  $\eta_V^{\Phi,y}$ is nondegenerate.
\end{rem}

\begin{cor}\label{cor:transinv}
Let $T$ be as in Proposition \ref{prop:reparam}. Then,
$$\eta_{V}^{ \Phi,  \Phi m_{a,T\Z}} = \eta_V^{ \Phi, \Phi m_{a,\Z}} + \Oo(\norm{\Id-T}).
$$
\end{cor}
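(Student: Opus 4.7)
The approach I will take is to observe that the map $\Z' \mapsto \eta_{V,\Z'}$ (with $\Phi$ fixed) is continuously differentiable in a neighbourhood of $\Z$, and then apply this smoothness to the perturbation $\Z' = T\Z$. The key tool is formula~\eqref{eq-etaV-defn}, which expresses
\[
\eta_{V,\Z'}(x) = \sum_{i,k} \alpha_{i,k}(\Z')\, \partial_{1,k} \Corr(z'_i, x), \qquad \alpha(\Z') = M(\Z')^{-1} u_{d,N},
\]
where $M(\Z')$ is built from second-order mixed derivatives of the correlation kernel $\Corr$ evaluated at pairs $(z'_i, z'_j)$. Since $\varphi \in \Cder{2}(\Xx)$, the kernel $\Corr$ is smooth, hence $\Z' \mapsto M(\Z')$ is smooth. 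The matrix $M(\Z)$ is invertible (this is implicit in the fact that the precertificate $\eta_{V,\Z}$ is well defined), so $M(\Z')$ remains invertible and $\Z' \mapsto \alpha(\Z')$ remains $\Cder{1}$ in a neighbourhood of $\Z$. Composing with the smooth evaluation $x \mapsto \partial_{1,k}\Corr(z'_i,x)$, I conclude that $\Z' \mapsto \eta_{V,\Z'}$ is $\Cder{1}$ from $\Xx^N$ into $L^\infty(\Xx)$ on a neighbourhood of $\Z$.

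Having established this smoothness, I will apply the mean value inequality to the increment $\Z' = T\Z$ at $\Z$. Using $\eta_V^{\Phi,\Phi m_{a,\Z'}} = \eta_{V,\Z'}$ (the precertificate depends on the measure only through its support), and the elementary bound $\|T\Z - \Z\|_\infty \lesssim \|\Id - T\|$, one obtains
\[
\|\eta_V^{\Phi,\Phi m_{a,T\Z}} - \eta_V^{\Phi,\Phi m_{a,\Z}}\|_{L^\infty(\Xx)} \;=\; \|\eta_{V,T\Z} - \eta_{V,\Z}\|_{L^\infty(\Xx)} \;\lesssim\; \|T\Z - \Z\|_\infty \;=\; \Oo(\|\Id - T\|),
\]
which is precisely the claim of the corollary.

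As an alternative derivation consistent with Proposition~\ref{prop:reparam}, one may start from the identity $\eta_V^{\Phi \circ T_\sharp^{-1},\Phi m_{a,\Z}} = \eta_V^{\Phi,\Phi m_{a,\Z}} \circ T^{-1}$ (obtained by applying the proposition with $y = \Phi m_{a,\Z} = (\Phi \circ T_\sharp^{-1}) m_{a,T\Z}$) and then compare both sides to $\eta_V^{\Phi,\Phi m_{a,T\Z}}$: the right-hand side differs by $\Oo(\|\Id - T\|)$ because $T^{-1}$ is close to $\Id$ and $\eta_V^{\Phi,\Phi m_{a,\Z}}$ is Lipschitz, while the left-hand side differs by $\Oo(\|\Id - T\|)$ because the operator $\Phi \circ T_\sharp^{-1}$ (with kernel $\varphi \circ T^{-1}$) is $\Oo(\|\Id - T\|)$-close to $\Phi$ and the linear system~\eqref{eq-etaV-defn} depends smoothly on the kernel. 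The only technical subtlety in either route is the uniform invertibility of $M(\Z')$ along the path from $\Z$ to $T\Z$, which is automatic by continuity once $\|\Id - T\|$ is small enough; I do not expect any serious obstacle here.
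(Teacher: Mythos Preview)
Your proposal is correct. Your primary argument---smoothness of $\Z'\mapsto\eta_{V,\Z'}$ via the explicit formula~\eqref{eq-etaV-defn} and the mean value inequality---is a genuinely more direct route than the paper's. The paper instead invokes Proposition~\ref{prop:reparam} to shift the perturbation from the support to the kernel: writing $\psi=\varphi\circ T$, it obtains
\[
\eta_V^{\Phi,\Phi m_{a,T\Z}}=\Phi^*\Gamma_{\psi,\Z}^{*,\dagger}\binom{1_N}{0_{2N}},\qquad
\eta_V^{\Phi,\Phi m_{a,\Z}}=\Phi^*\Gamma_{\varphi,\Z}^{*,\dagger}\binom{1_N}{0_{2N}},
\]
so that the evaluation points stay fixed at $\Z$ and only the kernel changes by $\varphi\circ T-\varphi=\Oo(\|\Id-T\|)$; perturbation of the pseudoinverse then gives the result. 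Your ``alternative derivation'' is essentially this same argument (with $T_\sharp$ versus $T_\sharp^{-1}$, a cosmetic difference). Both routes rely on invertibility of the Gram-type matrix at $\Z$ and yield implicit constants depending on $\|M(\Z)^{-1}\|$ (equivalently $\|\Gamma_{\varphi,\Z}^{\dagger}\|$), so neither buys better uniformity; your primary route simply avoids the reparametrization machinery and the need to track the Jacobian of $T^{-1}$.
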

\begin{proof}
Given $\varphi : \Xx \to \Hh$, let $\Gamma_{\varphi,\Z}:\RR^{3N}\to \Hh$ be defined as in \eqref{eq:Gamma}, where the subscript $\varphi$ makes explicit the associated kernel.
By Proposition  \ref{prop:reparam}, we have that
$$
\eta_V^{\Phi, \Phi m_{a,T\Z}} = \eta_V^{\Phi \circ T_\sharp , \Phi\circ T_\sharp  m_{a,\Z}} \circ T^{-1} = \Phi^* \Gamma_{\psi,\Z}^{*,\dagger}\binom{1_N}{0_{2N}},
$$
where $\psi = \varphi\circ T$.
On the other hand, $\eta_V^{\Phi,\Phi m_{a,\Z}} = \Phi^* \Gamma^{*,\dagger}_{\phi, z}\binom{1_N}{0_{2N}}$.
Therefore,
$$
\eta_{V}^{\Phi, \Phi m_{a,T\Z}} = \eta_V^{ \Phi, \Phi m_{a,\Z}} + \Oo(\norm{\Id-T}).
$$
\end{proof}

\section{Proof of Correlation Function~\eqref{eq-cor-neuro}}
\label{sec-proof-cor-neuro}

Let $\Xx \subset \RR^2$ denote the open unit disc. 
Then, for $x=(x_1,x_2), x'=(x_1',x_2') \in \Xx$, we let $p=M_1 + \imath M_2$, $p_1 = A_1+\imath A_2$ and $z=e^{it}$. Interpreting $X$ as the unit disc on the complex plane, one has
\begin{align*}
\Corr(x,x') &= \dotp{\phi(x)}{\phi(x')}
 = \int_{\partial X} \frac{\mathrm{d}z}{\imath z \abs{z-p}^2 \abs{z-p_1}^2} \\
& = \int_{\partial X} \frac{-\imath z}{(z-p)(1-z\overline p)(z-p_1)(1-z\overline p_1)}.
\end{align*}
When $p \neq p_1$, there are 2 poles inside $\Xx$: $z = p,p_1$, so by the Cauchy residue theorem,
\begin{align*}
\Corr(x,x') &= 2\pi  \left(  \frac{p}{(1-\abs{ p}^2)(p-p_1)(1-p\overline p_1)} + \frac{p_1}{(p_1-p)(1-p_1\overline p)(1-\abs{ p_1}^2)}\right)
\end{align*}
When $p=p_1$, there is 1 pole inside $X$: $z=p$, so,
\begin{align*}
\Corr(x,x') &= 2\pi \imath \left(\frac{\mathrm{d}}{\mathrm{d}z} \left( \frac{\imath z}{(1-z\overline p)^2}\right)\big\vert_{z=p}\right)
	= 2\pi \left( \frac{1}{(1-\abs{p}^2)^2} + \frac{2\abs{p}^2}{(1-\abs{p}^2)^3} \right)
\end{align*}
One can then check that both expressions simplify to
\eq{
	\Corr(x,x') = 2\pi  \frac{ 1 - |p|^2|p_1|^2 }  { (1-|p|^2)(1-|p_1|^2)|1-p\bar p_1|^2 }.
}

\bibliographystyle{plain}
\bibliography{biblio}

\end{document}